\DeclareMathAlphabet{\mathpzc}{OT1}{pzc}{m}{it}
\title{On maps with continuous path lifting}
\keywords{continuous path-covering property, unique path-lifting property, fibration, homotopy lifting property, topological fundamental group}
\author[J. Brazas]{Jeremy Brazas}
\address{West Chester University\\ Department of Mathematics\\ 25 University Avenue\\ West Chester, PA 19383, USA}
\email{jbrazas@wcupa.edu}
\author[A. Mitra]{Atish Mitra}
\address{Montana Technological University\\ Department of Mathematical Sciences\\ 1300 W. Park Street\\
Butte, MT, 59701, USA}
\email{amitra@mtech.edu}
\date{\today}
\newtheorem{theorem}{Theorem}
\newtheorem{lemma}[theorem]{Lemma}
\newtheorem{proposition}[theorem]{Proposition}
\newtheorem{corollary}[theorem]{Corollary}
\theoremstyle{definition}\newtheorem{definition}[theorem]{Definition}
\theoremstyle{definition}\newtheorem{example}[theorem]{Example}
\theoremstyle{definition}
\theoremstyle{definition}\newtheorem{problem}[theorem]{Problem}
\theoremstyle{definition}\newtheorem{remark}[theorem]{Remark}
\newcommand{\ds}{\displaystyle}
\newcommand{\wt}{\widetilde}
\newcommand{\bbr}{\mathbb{R}}
\newcommand{\bbh}{\mathbb{H}}
\newcommand{\bbn}{\mathbb{N}}
\newcommand{\bbq}{\mathbb{Q}}
\newcommand{\ui}{[0,1]}
\newcommand{\tXh}{\widetilde{X}_{H}}
\newcommand{\txh}{\tilde{x}_{H}}
\newcommand{\bfz}{{\bf{0}}}
\newcommand{\pxxo}{P(X,x_0)}
\newcommand{\pionex}{\pi_{1}(X,x_0)}
\newcommand{\scrs}{\mathscr{S}}
\numberwithin{theorem}{section}
\begin{document}
\begin{abstract}
We study a natural generalization of covering projections defined in terms of unique lifting properties. A map $p:E\to X$ has the ``continuous path-covering property" if all paths in $X$ lift uniquely and continuously (rel. basepoint) with respect to the compact-open topology. We show that maps with this property are closely related to fibrations with totally path-disconnected fibers and to the natural quotient topology on the homotopy groups. In particular, the class of maps with the continuous path-covering property lies properly between Hurewicz fibrations and Serre fibrations with totally path-disconnected fibers. We extend the usual classification of covering projections to a classification of maps with the continuous path-covering property in terms of topological $\pi_1$: for any path-connected Hausdorff space $X$, maps $E\to X$ with the continuous path-covering property are classified up to weak equivalence by subgroups $H\leq \pi_1(X,x_0)$ with totally path-disconnected coset space $\pi_1(X,x_0)/H$. Here, ``weak equivalence" refers to an equivalence relation generated by formally inverting bijective weak homotopy equivalences.
\end{abstract}
\maketitle

\section{Introduction}
The breadth of the applications of covering space theory has motivated the development of many useful generalizations. Perhaps most historically prominent is Spanier's classical treatment of Hurewicz fibrations with totally path-disconnected fibers \cite[Chapter 2]{Spanier66}. Unfortunately, since fibrations are defined abstractly in terms of homotopy lifting with respect to arbitrary spaces, it is not possible to extend the usual classification of covering projections to classify these maps up to homeomorphism, i.e. deck transformation. Indeed, there exist bijective fibrations $E\to X$ of metric spaces, which are not homeomorphisms \cite[Example 2.4.8]{Spanier66}. In this paper, we identify and classify a natural class of maps that lies properly between Hurewicz and Serre fibrations with totally path-disconnected fibers, namely, those maps that satisfy the following property: a map $p:E\to X$ has the \textit{continuous path-covering property} if for every $e\in E$, all paths $\alpha:([0,1],0)\to (X,p(e))$ have a unique lift $\wt{\alpha}:(\ui,0)\to (E,e)$ so that the lifting function $\alpha\mapsto \wt{\alpha}$ is continuous with respect to the compact-open topology on based path spaces (Definition \ref{maindefs2}).

We note that there is no single, best, or end-all generalized covering space theory since one must choose the properties and structures that one cares about according to intended applications. The literature on the subject is vast, and we do not attempt to give a complete survey here. We briefly mention that there are many natural approaches closely related to shape theory such as R.H. Fox's theory of overlays \cite{Fox74}. Similar approaches using pro-groups or other algebraic structures in place of the usual fundamental group are considered and compared in \cite{ATHP99,HP98,HPM,Wilkins} and the references therein. Covering theories for categories other than the usual topological category, such as uniform spaces \cite{BP07uniform,BDLM10uniform} and topological groups \cite{BP01topgrp} have also appeared. The approach we take is motivated by the ongoing development of topological methods for studying and applying the algebraic and topological properties of fundamental groups. In particular, our approach is most closely related to the theory of semicoverings \cite{Brsemi,FZ13corefree,KMTSemicover}. Semicoverings and topologized fundamental groups have been used to fill in longstanding gaps in general topological group theory \cite{BrazOpenSubgroupsofFTG} that have not been achieved using purely topological methods. We expect similar applications to follow from the strengthened relationship between covering-theoretic methods and topological group theory developed in the current paper.

Hurewicz fibrations with totally path-disconnected fibers form a class of maps with many nice internal properties that include all inverse limits of covering projections. In recent work, the authors of \cite{CHPGeomofCLS} have identified sufficient compactness conditions on the fibers of such a fibration $p:E\to X$ to ensure that $p$ is equivalent to an inverse limit of finite-sheeted covering maps. When local triviality or compactness conditions on the fibers is not assumed, it becomes an onerous task to verify that a given map $p:E\to X$ with unique lifting of all paths (i.e. with the \textit{path-covering property}) is a Hurewicz fibration. In fact, when $X$ is the closed unit disk and $E$ is locally path-connected, this verification is equivalent to a curiously difficult open problem posed by Jerzy Dydak in \cite{Dydak} (see Problem \ref{dydaksproblem}). The apparent difficulty stems from the fact that if one does not already know that $p$ is an inverse limit of fibrations, then one must verify the homotopy lifting property with respect to \textit{all} topological spaces $Z$. Even if one restricts to a convenient category of spaces, one must still address spaces $Z$ in which convergent nets are not always realized as the endpoints of some convergent net of paths. Since our goal is to obtain a theory that supports application of (topologized) homotopy groups, it is clear that we must weaken the highly demanding definition of a ``Hurewicz fibration."

Serre fibrations with totally path-disconnected fibers form a significantly larger class of maps than their Hurewicz counterparts. Included among these are the generalized regular covering maps defined originally by Fischer and Zastrow in \cite{FZ07} and later in \cite{BDLM08} using a different but ultimately equivalent approach. Such maps were extended to the non-regular case in \cite{Brazcat} and characterized completely within the $\pi_1$-subgroup lattice for metric spaces in \cite{BFTestMap}. Such generalized covering maps provide a theory, which has been proven to retain the largest possible $\pi_1$-subgroup lattice among any other theory that employs homotopy lifting \cite{Brazcat}. For instance, such maps can retain non-trivial information about the $\pi_1$-subgroup lattice even for spaces with trivial shape-type. Consequently, the intended application of this locally path-connected approach is to provide a highly refined theory to aid the progressive work on the infinitary-algebraic structure of homotopy groups of Peano continua and other locally path-connected spaces.

In Section \ref{sectionbasictheory}, we develop the basic theory of the continuous path-covering property. We observe that, just as with fibrations, maps with the continuous path-covering property are closed under composition, infinite products, pullbacks, inverse limits. Moreover, these maps even enjoy the ``2-of-3 property" (see Lemma \ref{compositionlemma}). In Section \ref{sectioncomparision}, we give a general comparison of maps with unique lifting properties. 

\begin{theorem}\label{fibrationtheorem}
Consider the following properties of a map $p:E\to X$.
\begin{enumerate}
\item $p$ is a Hurewicz fibration with totally path-disconnected fibers,
\item $p$ has the continuous path-covering property,
\item $p$ is a Serre fibration with totally path-disconnected fibers,
\item $p$ has the path-covering property.
\end{enumerate}
Then (1) $\Rightarrow$ (2) $\Rightarrow$ (3) $\Rightarrow$ (4).
\end{theorem}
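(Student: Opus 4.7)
The plan is to prove the three implications in sequence, with the bulk of the work concentrated in $(2)\Rightarrow(3)$.

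For $(1)\Rightarrow(2)$, I would fix $e\in E$ with $x=p(e)$ and apply the Hurewicz homotopy lifting property of $p$ to the evaluation $ev:P(X,x)\times\ui\to X$, $ev(\alpha,t)=\alpha(t)$. Since $ev(\cdot,0)$ is constant at $x$, it lifts to the constant map at $e$, and the HLP produces a continuous $\wt{ev}:P(X,x)\times\ui\to E$ whose exponential adjoint $L:P(X,x)\to P(E,e)$ is the sought continuous lifting function. Uniqueness of $L(\alpha)$ among lifts of $\alpha$ starting at $e$ is the classical fact that a Hurewicz fibration with totally path-disconnected fibers admits at most one lift of a given path.

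For $(2)\Rightarrow(3)$, first observe that (2) itself forces fibers to be totally path-disconnected, since any path in $p^{-1}(x)$ and the constant path at its initial point are two lifts of the constant path at $x$, hence coincide by the uniqueness clause in (2). The main task is verifying Serre's HLP: given $H:\ui^n\times\ui\to X$ and $\wt{H}_0:\ui^n\to E$ lifting $H(\cdot,0)$, construct a continuous extension $\wt{H}:\ui^n\times\ui\to E$. Define $\wt{H}(s,\cdot)$ pointwise as the unique lift of $H(s,\cdot)$ starting at $\wt{H}_0(s)$ guaranteed by (2). To check joint continuity near $s_0\in\ui^n$, set $e_0=\wt{H}_0(s_0)$ and pick a convex neighborhood $N\ni s_0$. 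For $s\in N$, let $\sigma_s:\ui\to N$ be the straight-line path from $s_0$ to $s$, and form the concatenated path $\delta_s=(H(\sigma_s(\cdot),0))\cdot H(s,\cdot)\in P(X,p(e_0))$. The assignment $s\mapsto\delta_s$ is continuous into $P(X,p(e_0))$, so by (2) its lift $\wt{\delta}_s\in P(E,e_0)$ varies continuously in $s$. Uniqueness identifies the first half of $\wt{\delta}_s$ with $\wt{H}_0\circ\sigma_s$ (reparametrized) and the second half with the desired $\wt{H}(s,\cdot)$. Joint continuity of $\wt{H}$ on $N\times\ui$ follows, and the local lifts glue by uniqueness of path lifts.

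For $(3)\Rightarrow(4)$, existence of path lifts is immediate since a Serre fibration has the HLP with respect to the one-point space, and uniqueness of path lifts for a Serre fibration with totally path-disconnected fibers is standard: given two lifts $\wt{\alpha}_1,\wt{\alpha}_2$ of $\alpha$ starting at the same point, a careful application of the Serre HLP together with the fact that paths in fibers must be constant forces the two lifts to agree. The delicate step I expect to be the main obstacle is the joint continuity argument in $(2)\Rightarrow(3)$: although the pointwise definition of $\wt{H}$ is immediate, continuity in the parameter $s$ is not, and the remedy is the concatenation-and-uniqueness trick described above. A secondary subtlety is verifying that the locally defined lifts agree on overlaps of convex neighborhoods, which again reduces to the uniqueness clause in (2) applied to each path $H(s,\cdot)$.
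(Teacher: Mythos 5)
Your proof is correct, but it takes a genuinely different route from the paper's in the two nontrivial implications. For $(1)\Rightarrow(2)$ the paper argues via Remark \ref{netstofansremark}: continuity of the lifting function is recast as the liftability of maps from directed arc-fans $F(J)$, which is obtained from the homotopy lifting property with respect to the space $K=J\cup\{\infty\}$. You instead run the classical Hurewicz lifting-function argument, applying the HLP with respect to the path space $P(X,x)$ itself to the evaluation $P(X,x)\times I\to X$ and taking the exponential adjoint; this directly produces a continuous global section $L$ of $P(p)$, which together with unique path lifting shows $P(p)$ is a homeomorphism. Both are sound; yours is arguably the more standard and economical argument, while the paper's arc-fan formulation is the one it reuses elsewhere (e.g.\ in Lemma \ref{closedembeddingonloopsprop}). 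For $(2)\Rightarrow(3)$ the paper is structural: it first proves Theorem \ref{freepathspacesthm} (the induced map $P(E)\to P(X)$ on free path spaces again has the continuous path-covering property) and then reduces the HLP for $I^n$ to surjectivity of $P(p^{I^n})$ via exponential laws and induction. You instead build the lifted homotopy pointwise and establish joint continuity by a local concatenation-and-uniqueness trick on convex neighborhoods; the key points --- that $s\mapsto\delta_s$ is continuous into $P(X,p(e_0))$, that uniqueness identifies the two halves of $\wt{\delta}_s$ with $\wt{H}_0\circ\sigma_s$ and $\wt{H}(s,\cdot)$, and that the adjoint of a continuous map into $E^I$ is continuous since $I$ is locally compact --- all check out, and no genuine gluing issue arises because the lift is defined globally by pointwise uniqueness. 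Your approach is elementary and self-contained; the paper's buys the free-path-space statement as a result of independent interest and handles all cubes uniformly. The implications $(3)\Rightarrow(4)$ agree in both treatments (Spanier's Lemma \ref{serrecharlemma}).
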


Following the proof of Theorem \ref{fibrationtheorem}, we provide examples to show that the conditionals (1) $\Rightarrow$ (2) and (2) $\Rightarrow$ (3) are not reversible. Hence, the continuous path-covering property lies properly between Hurewicz and Serre fibrations with totally path-disconnected fibers. To promote the fundamental nature of Dydak's Problem, referenced above, we show that an affirmative answer to this problem is equivalent to the converse of (3) $\Rightarrow$ (4). We also identify a consequence of a positive answer to Dydak's Problem within the context of classical covering space theory (Corollary \ref{corollarytodydak}).

In Section \ref{sectioninducedhomo}, we consider the homotopy groups $\pi_n(X,x_0)$ with the natural quotient topology inherited from the compact-open topology on the $n$-th iterated loop space $\Omega^n(X,x_0)$ \cite{GHMMTopHomGrps,Fab11}. With this topology, the homotopy groups become functors to the category of quasitopological groups and continuous homomorphisms. We show that a map $p:E\to X$ with the continuous path-covering property induces a closed embedding on fundamental groups and an isomorphism of quasitopological abelian groups on the higher homotopy groups (Theorem \ref{closedembeddingonpi1}). Moreover, for any $e\in E$, the coset space $\pionex/p_{\#}(\pi_1(E,e))$ is totally path-disconnected and is homeomorphic to the fibers of $p$ if and only if the path endpoint-evaluation map $ev_1:P(E,e_0)\to E$, $ev_1(\alpha)=\alpha(1)$ is a topological quotient map (Theorem \ref{liftingcorrespondencetheorem}). The topological property ``$ev_1:P(Z,z)\to Z$ is quotient" of a space $Z$ is a generalization of a space being ``path connected and local path-connected," which arises naturally within in our work, sometimes within biconditional statements. 

Section \ref{sectionclassification} is dedicated to a proof of our classification theorem. Just as with Serre/Hurewicz fibrations with totally path-disconnected fibers, it is not possible to extend the traditional classification of covering projections in a way that classifies all continuous path-covering property up to \textit{homeomorphism} (see Example \ref{zeemansexample}). This reality is due to the fact that these maps are defined categorically in terms of lifting properties rather than in terms of local triviality conditions. Therefore, to identify a suitable and practical classification, we employ a technique from model category theory \cite{Hovey}, namely, ``localization at the weak homotopy equivalences," which refers to the formal inversion of weak homotopy equivalences to generate an equivalence relation. A \textit{simple weak equivalence} between two maps $p_i:E_i\to X$ $i\in\{1,2\}$ with the continuous path-covering property consists of a commutative diagram
\[\xymatrix{
E_1 \ar[dr]_-{p_1} & E_3 \ar[l]_-{f_1}  \ar[d]^-{p_3} \ar[r]^-{f_2} & E_2  \ar[dl]^-{p_2}\\ & X
}\]
where $p_3$ also has the continuous path-covering property and $f_1,f_2$ are \textit{bijective} weak homotopy equivalences (in fact, they necessarily induce topological isomorphisms on all homotopy groups). Then $p_1,p_2$ are \textit{weakly equivalent} if they are connected by a finite sequence of simple weak equivalences. Our main classification result is the following.

\begin{theorem}\label{mainresult1}
Suppose $(X,x_0)$ is a path-connected Hausdorff space and $H\leq \pionex$. There exists a map $p:(E,e_0)\to (X,x_0)$ with the continuous path-covering property, unique up to weak equivalence, such that $p_{\#}(\pi_1(E,e_0))=H$ if and only if $\pionex/H$ is totally path disconnected. Moreover,
\begin{enumerate}
\item among the maps $p:E\to X$ for which $ev_1:P(E,e)\to E$ is quotient, the maps $p$ are classified up to equivalence,
\item every map $p:E\to X$ with the continuous path-covering property is weakly equivalent to another $p':E'\to X$ where $ev_1(E',e')\to E'$ is quotient.
\end{enumerate}
\end{theorem}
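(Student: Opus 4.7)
The plan has three parts: necessity of the coset condition, existence via a standard path-space construction, and the two classification statements. For \textbf{necessity}, if $p:E\to X$ has the continuous path-covering property with $p_{\#}(\pi_1(E,e_0))=H$, then the totally-path-disconnected conclusion for $\pionex/H$ is already delivered by Theorem~\ref{liftingcorrespondencetheorem} (in Section~\ref{sectioninducedhomo}). For \textbf{existence}, given $H\leq\pionex$ with $\pionex/H$ totally path-disconnected, I would construct $\tXh$ as the quotient of $\pxxo$ (under the compact-open topology) by the relation $\alpha\sim_H\beta$ iff $\alpha(1)=\beta(1)$ and $[\alpha\cdot\overline{\beta}]\in H$, with endpoint projection $p_H([\alpha])=\alpha(1)$ and basepoint $\txh=[c_{x_0}]$.

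The verification then proceeds in three steps. First, for a path $\gamma$ in $X$ with $\gamma(0)=\beta(1)$, the assignment $\tilde{\gamma}(t)=[\beta\cdot\gamma_t]$, where $\gamma_t(s)=\gamma(ts)$, gives a lift at $[\beta]$; uniqueness of this lift is precisely where the hypothesis ``$\pionex/H$ totally path-disconnected'' enters, since two lifts differ pointwise by a continuously varying element of a fiber naturally identified with $\pionex/H$, which must then be constant. Continuity of the lifting function in the compact-open topology reduces to continuity of concatenation-and-truncation on path spaces followed by the quotient map $\pxxo\to\tXh$. Second, unwinding definitions shows $p_{H\#}(\pi_1(\tXh,\txh))=H$. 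Third, to see that $ev_1:P(\tXh,\txh)\to\tXh$ is quotient, the tautological map $\pxxo\to\tXh$ factors as $\pxxo\xrightarrow{\ell} P(\tXh,\txh)\xrightarrow{ev_1}\tXh$, where $\ell$ sends $\alpha$ to its unique lift at $\txh$ and is continuous; since the composition is the defining quotient and $\ell$ is continuous, a standard argument forces $ev_1$ to be quotient.

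For the \textbf{classification} statements, the argument for (1) is a direct lifting construction: given $p_1,p_2$ both with the continuous path-covering property, both inducing $H$ and both satisfying the $ev_1$-quotient condition, define $f:E_1\to E_2$ by sending $e\in E_1$ to the endpoint of the lift in $E_2$ of $p_1\circ\alpha$ for any path $\alpha$ in $E_1$ from $e_1$ to $e$; well-definedness uses the equality of induced subgroups, while continuity combines the continuous lifting function for $p_2$ with the $ev_1$-quotient hypothesis on $E_1$. Symmetry yields a homeomorphism over $X$. For (2), given any $p:E\to X$ with $H=p_{\#}(\pi_1(E,e_0))$, the lifting function for $p$ produces a continuous bijection $\phi:\tXh\to E$, $\phi([\alpha])=\tilde\alpha(1)$, which is a weak homotopy equivalence by Theorem~\ref{closedembeddingonpi1}. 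Then the span $\tXh\xleftarrow{\mathrm{id}}\tXh\xrightarrow{\phi}E$ is a simple weak equivalence from $p_H$ to $p$, and the uniqueness up to weak equivalence in the main statement follows by chaining through $p_H$ (which itself falls under (1)).

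The main obstacle will be the continuity of the lifting function for $p_H$ in the compact-open topology. This requires careful analysis of concatenation and truncation on $\pxxo$ with respect to subbasic open sets of the compact-open topology and then descent through the quotient defining $\tXh$; the Hausdorff hypothesis on $X$ is almost certainly essential here. Secondary delicate points include the precise uniqueness-of-lifts argument, which turns the ``totally path-disconnected'' hypothesis into a rigidity statement for paths in fibers, and the factorization argument for the $ev_1$-quotient condition, where one must invoke the general fact that if $g\circ h$ is quotient and $h$ is continuous, then $g$ is quotient.
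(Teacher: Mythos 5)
Your proposal follows the paper's route almost exactly: necessity via Theorem \ref{liftingcorrespondencetheorem} (recorded in the paper as Corollary \ref{tpdcorollary}), existence via the quotient $\tXh$ of $\pxxo$ with endpoint projection $p_H$, and the $ev_1$-quotient property of $\tXh$ via the factorization $q_H=ev_1\circ \ell$ with $\ell$ the continuous standard-lift map. Two points of comparison are worth making. First, your uniqueness/classification step is organized a little differently, and favorably so: you connect an arbitrary $p$ to the canonical $p_H$ by a single map $\phi:\tXh\to E$ over $X$, whereas the paper (Lemma \ref{uniquenesslemma}) retopologizes each total space as $c(E_i)$ and produces a homeomorphism $c(E_1)\cong c(E_2)$ via Corollary \ref{cpcequivalencetheorem}; both work, and yours makes transparent the paper's observation that weakly equivalent maps are always related by a single simple weak equivalence. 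Second, you have misplaced the technical center of gravity. Continuity of the lifting function for $p_H$ needs no subbasic-open-set analysis and no Hausdorff hypothesis: the standard lift $\alpha\mapsto (t\mapsto H[\alpha_t])$ is $P(q_H)\circ\mathscr{S}$ for the truncation map $\mathscr{S}(\alpha)(t)(s)=\alpha(st)$, continuous by the exponential law, and it is a section of $P(p_H)$; so $P(p_H)$ is a retraction and becomes a homeomorphism the moment injectivity is known (Lemma \ref{retractionlemma}). The genuinely delicate step is the one you dispatch in a clause, namely that two lifts of $\alpha$ ``differ by a continuously varying element of $\pionex/H$.'' At that stage one does not yet know $p_H$ has the continuous path-covering property, so Theorem \ref{liftingcorrespondencetheorem} cannot be invoked to identify the fiber with $\pionex/H$ topologically; one must prove directly that $(H[\beta],t)\mapsto H[\beta\cdot\alpha_t^{-}]$ is continuous on the pullback $E_{\alpha,H}$, and this is exactly where the Hausdorff hypothesis enters (closedness of the graph $G_\alpha$, so that $q_H\times id$ restricts to a quotient map onto $E_{\alpha,H}$, using Whitehead's theorem that crossing with $I$ preserves quotients). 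That is Lemma \ref{psicontinuitylemma}; your sketch is correct in outline but this is the lemma you would actually have to supply to close the uniqueness-of-lifts argument.
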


The proof of the existence portion of Theorem \ref{mainresult1} requires a careful analysis of the natural quotient construction of covering spaces. A surprising consequence of our proof of (2) of Theorem \ref{mainresult1} is that two weakly equivalent maps can always be related by a single simple weak equivalence. By employing topologized fundamental group theory \cite{BFqtpfg}, another immediate consequence of this theorem is that any space $X$ whose fundamental group naturally injects into the first shape group admits a ``universal weak equivalence class," i.e. a map $E\to X$ with the continuous path-covering property where $E$ is simply connected. For instance, all one-dimensional metric spaces and planar sets admit such a map.

If $p:E\to X$ has the continuous path-covering property and non-discrete fibers, then $E$ will rarely be locally path-connected. In a sense, this is the price one must pay to ensure that paths lift continuously. However, (2) of Theorem \ref{mainresult1} still allows for a certainly level of topological control, namely, that we may always choose $E$ to have the slightly weaker property that $ev_1:P(E,e_0)\to E$ is quotient. In summary, Parts (1) and (2) of Theorem \ref{mainresult1} show that our classification up to weak equivalence restricts to a more traditional classification ``up to homeomorphism" when we restrict to the category of spaces $E$ for which $ev_1:P(E,e)\to E$ is quotient.

Finally, in Section \ref{sectionstructure}, we prove the following theorem, which identifies natural situations where a weak equivalence class may be represented by an inverse limit of covering projections/semicovering maps.

\begin{theorem}\label{mainresult2}
Suppose $p:(E,e_0)\to (X,x_0)$ has the continuous path-covering property where $X$ is locally path connected and $H=p_{\#}(\pi_1(E,e_0))$ is a normal subgroup of $\pionex$.
\begin{enumerate}
\item If $\pionex/H$ is a compact group, then $p$ is weakly equivalent to an inverse limit of finite-sheeted, regular covering projections.
\item If $E$ is simply connected, i.e. $H=1$, and $\pionex$ is locally compact, then $p$ is weakly equivalent to an inverse limit of semicovering maps.
\end{enumerate}
\end{theorem}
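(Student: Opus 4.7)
\emph{Proof proposal.} The plan for both parts follows a common template: apply a structure theorem to the topological group $\pionex/H$ to produce an inverse system of covering (in Part 1) or semicovering (in Part 2) maps of $X$ whose $\pi_1$-images intersect to $H$, then use the closure of the continuous path-covering property under inverse limits (established in Section \ref{sectionbasictheory}) together with the classification in Theorem \ref{mainresult1} to identify the resulting inverse limit with $p$ up to weak equivalence.

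For Part (1), set $G := \pionex/H$. By Theorem \ref{liftingcorrespondencetheorem} this is totally path-disconnected, and by hypothesis it is also a compact Hausdorff topological group. Structure theory of compact groups provides a nontrivial one-parameter subgroup in any nontrivial compact connected topological group, which together with total path-disconnectedness forces the identity component of $G$ to be trivial; thus $G$ is totally disconnected and hence profinite, giving $G = \varprojlim_\alpha G/N_\alpha$ over its open normal subgroups $N_\alpha$ of finite index. Pulling back through the quotient $q:\pionex\to G$ (and using Theorem \ref{closedembeddingonpi1} to ensure the quotient topology behaves well), the preimages $H_\alpha := q^{-1}(N_\alpha)$ are open, normal, finite-index subgroups of $\pionex$ containing $H$, with $\bigcap_\alpha H_\alpha = q^{-1}(\bigcap_\alpha N_\alpha) = H$. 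Local path-connectedness of $X$ implies that each $H_\alpha$ is the $\pi_1$-image of a finite-sheeted regular covering projection $p_\alpha: E_\alpha \to X$ (a finite-sheeted semicovering being automatically a covering projection). The inverse limit $\varprojlim p_\alpha$ then has the continuous path-covering property, and stagewise unique path lifting identifies its $\pi_1$-image as $\bigcap_\alpha H_\alpha = H$; Theorem \ref{mainresult1} concludes.

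For Part (2), we have $H=1$, so $\pionex$ itself is locally compact and totally path-disconnected. The analogous structural fact, reducing via the Iwasawa-type decomposition of a connected locally compact group to the compact case, yields that every nontrivial connected locally compact topological group contains a nontrivial arc; hence the identity component of $\pionex$ is trivial and $\pionex$ is totally disconnected. Van Dantzig's theorem then produces a neighborhood basis at the identity of compact open subgroups $\{U_\alpha\}$, which by Hausdorffness satisfies $\bigcap_\alpha U_\alpha = \{1\}$. Since $X$ is locally path-connected, each open subgroup $U_\alpha$ of $\pionex$ is realized as the $\pi_1$-image of a semicovering map $q_\alpha: F_\alpha \to X$. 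As before, $\varprojlim q_\alpha$ inherits the continuous path-covering property with trivial $\pi_1$-image, and Theorem \ref{mainresult1} identifies it as weakly equivalent to $p$.

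The chief obstacle is the topological-group-theoretic step of promoting total path-disconnectedness to total disconnectedness in the compact and locally compact settings; both reductions ultimately rest on the existence of nontrivial one-parameter subgroups in nontrivial compact connected topological groups, a genuinely structural input. A secondary technical point is the verification that the $\pi_1$-image of the inverse limit equals the intersection of the individual $H_\alpha$ (resp.\ $U_\alpha$), which is carried out by lifting a given loop in $X$ stage by stage through the inverse system and invoking unique path lifting to assemble a consistent lift to the inverse limit.
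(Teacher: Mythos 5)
Your overall architecture is sound and Part (1) follows essentially the same path as the paper: total disconnectedness via an arc/one-parameter-subgroup argument, profiniteness, pullback of the finite-index open normal subgroups $N_\alpha$ to $\pionex$, realization by finite-sheeted regular coverings using local path-connectedness of $X$ (the paper cites \cite[Cor.~5.9]{FZ13corefree}), and the inverse-limit lemma. Part (2), however, is a genuinely different and arguably more economical route. The paper extracts from van Dantzig only a \emph{single} compact open subgroup $K\leq\pionex$, realizes it by a semicovering $q:E'\to X$, lifts $p$ to a map $r:E\to E'$ with compact $\pi_1(E',e_0')$, applies Part (1) to $r$ \emph{over $E'$}, and then composes the resulting coverings of $E'$ with $q$ (using that compositions of semicoverings are semicoverings). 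You instead use the full strength of van Dantzig --- a neighborhood basis of compact open subgroups with trivial intersection --- and realize each one directly by a semicovering of $X$, avoiding the intermediate space $E'$ entirely. Your version works: the compact open subgroups are closed under finite intersection, hence directed, the bonding maps exist by the lifting lemma since semicovering total spaces over a locally path-connected base are locally path-connected, and the paper's own Remark~\ref{semicoveringremark} confirms that an intersection of open subgroups is realized by an inverse limit of semicoverings. What the paper's detour buys is that it reuses Part (1) verbatim and needs only the bare existence statement of van Dantzig; what yours buys is brevity and symmetry with Part (1).

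One genuine gap, affecting both parts: $\pionex/H$ with the quotient topology is a priori only a \emph{quasitopological} group (a point the paper stresses repeatedly --- multiplication can fail to be continuous), whereas the profinite structure theorem, van Dantzig's theorem, and the arc-existence results you invoke are all theorems about topological groups. You silently treat $\pionex/H$ (resp.\ $\pionex$) as a topological group. The paper closes this hole by first applying Ellis's theorem \cite{ellis}: a locally compact $T_1$ quasitopological group is automatically a topological group. Without this step (or some substitute), your appeals to compact/locally compact group structure theory are not justified. The fix is a one-line citation, but as written the argument does not go through.
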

We conclude the paper, in Section \ref{sectiondiagram}, with a single diagramatic summary of our results. This diagram, which incorporates our main results, illustrates the relationships between the maps considered throughout this paper as well as the relationships between these maps and the topological-algebraic properties of $\pionex$.
\section{Notation and Preliminaries}\label{sectionprelims}

\subsection{Mapping Spaces and Path Spaces}

If $X$ and $Y$ are spaces, $Y^X$ denotes the set of continuous maps $X\to Y$ with the compact-open topology and for $A\subseteq X$, $B\subseteq Y$, $(Y,B)^{(X,A)}$ denotes the subspace of relative maps $f\in Y^X$ satisfying $f(A)\subseteq B$. In particular, $\Omega^{n}(X,x)$, $n\in\bbn$ will denote the relative mapping space $(X,x)^{(I^n,\partial I^n)}$. The constant map $X\to Y$ at a point $y\in Y$ will be denoted $c_y$. The free path space $X^{I}$ where $I=\ui$ is the closed unit interval will be denoted by $P(X)$. The subspace $(X,x)^{(I,0)}$ of paths $\alpha:I\to X$ starting at $x$ as $P(X,x)$; note that $\Omega(X,x)\subseteq P(X,x)$. If $\alpha,\beta:I\to X$ are paths such that $\alpha(1)=\beta(0)$, then $\alpha\cdot\beta:I\to X$ denotes the standard concatenation and $\alpha^{-}(t)=\alpha(1-t)$ denotes the reverse path of $\alpha$. 

If $f:X\to Y$ is a map, then $P(f):P(X)\to P(Y)$ denotes the induced map $P(f)(\alpha)=f\circ\alpha$, which also restricts to a map $P(X,x)\to P(X,f(x))$. The \textit{endpoint evaluation map} is the map $ev_1:P(X,x)\to X$, $ev_1(\alpha)=\alpha(1)$, which is continuous and onto when $X$ is path connected. If $X$ is locally path-connected, then $ev_1$ is an open surjection. Consider the property of $X$: ``$ev_1:P(X,x)\to X$ is a topological quotient map," which holds for all points $x\in X$ if it holds for at least one. This property is a natural generalization of the joint property ``path connected and locally path connected" and will appear throughout this paper. Path-connected and non-locally path-connected spaces for which $ev_1$ is quotient include all non-locally path-connected, contractible spaces and many spaces used in the theory and applications of generalized covering space theories and topologized fundamental groups, c.f. \cite{Brsemi,BrazOpenSubgroupsofFTG}.

We refer to \cite{Spanier66} as a standard reference on covering space theory. We consider the following properties, each of which is held by all covering projections.

\begin{definition}\label{maindefs1}
Let $E$ and $X$ be topological spaces.
\begin{enumerate}
\item A map $p:E\to X$ has the \textit{unique path-lifting property} if for each $e\in E$, the induced map $P(p):P(E,e)\to P(X,p(e))$ is injective.
\item A map $p:E\to X$ has the \textit{path-covering property} if for each $e\in E$, the induced map $P(p):P(E,e)\to P(X,p(e))$ is bijective.
\end{enumerate}
\end{definition}

We will always assume the spaces $E$ and $X$ are non-empty and path connected. If $p:E\to X$ has the path-covering property, $E\neq \emptyset$, and $X$ is path-connected, then $p$ must necessarily be surjective. 

\subsection{Topologized homotopy groups}

The $n$-th homotopy group $\pi_n(X,x_0)$ will be equipped with the natural quotient topology inherited from $\Omega^n(X,x_0)$ so that the natural map $\pi:\Omega^n(X,x_0)\to\pi_n(X,x_0)$, $\pi(\alpha)=[\alpha]$ sending a map to its homotopy class is a topological quotient map. It is known that $\pi_n(X,x_0)$ is a quasitopological group in the sense that inversion is continuous and left and right translations $[\alpha]\mapsto [\alpha][\beta]$ and $[\alpha]\mapsto [\beta][\alpha]$ for fixed $\beta\in\Omega^n(X,x_0)$ are continuous. Although $\pi_n(X,x)$ can fail to be a topological group for any $n\geq 1$ \cite{Fab10,Fab11}, it is a homogeneous space, which is discrete if $X$ is locally contractible \cite{CM,GHMMTopHomGrps}.

If $f:(X,x)\to (Y,y)$ is a based map, then the homomorphism $f_{\#}:\pi_n(X,x)\to \pi_n(Y,y)$ is continuous. An isomorphism in the category of quasitopological groups is a group isomorphism, which is also a homeomorphism. If $f$ induces an isomorphism $f_{\#}:\pi_n(X,x)\to \pi_n(Y,y)$ of quasitopological groups for all $n\geq 1$, then we call $f$ a \textit{weak topological homotopy equivalence}.

If $H\leq \pionex$ is a subgroup, the coset space $\pionex/H$ inherits the quotient topology from $\pionex$. The translation homeomorphism $[\alpha]\mapsto [\alpha\cdot\beta]$ of $\pionex$ descends to a homeomorphism $H[\alpha]\mapsto H[\alpha\cdot\beta]$ on $\pionex/H$. Hence, $\pionex/H$ is a homogeneous space, which is $T_1$ (resp. discrete) if and only if $H$ is closed (resp. open). We refer to \cite{BFqtpfg} for more on $\pi_1$ with the quotient topology.

\subsection{Fibrations with the Unique Path-Lifting Property}

\begin{definition}
A map $p:E\to X$ has the \textit{homotopy lifting property} with respect to a space $Z$ if for every pair of maps $f:Z\to E$, $g:Z\times I\to X$ such that $p\circ f (z)=g(z,0)$, there is a map $\wt{g}:Z\times I\to E$ such that $p\circ \wt{g}=g$. A \textit{Hurewicz fibration} is a map with the homotopy lifting property with respect to all topological spaces. A \textit{Serre fibration} is a map with the homotopy lifting property with respect to $I^n$ for all $n\geq 0$.
\end{definition}

Every covering projection in the classical sense is a Hurewicz fibration with discrete fibers and every Hurewicz fibration is a Serre fibration.

\begin{lemma}\cite[Proof of 2.2.5]{Spanier66}\label{serrecharlemma}
A Serre fibration $p:E\to X$ has the unique path-lifting property if and only if every fiber of $p$ is totally path disconnected.
\end{lemma}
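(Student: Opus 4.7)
The plan is to prove the two directions separately; the forward direction is essentially immediate, while the converse applies the Serre fibration property to a suitably chosen homotopy. For the forward implication, I argue by contrapositive: suppose some fiber $p^{-1}(x)$ contains a non-constant path $\gamma:(I,0)\to (p^{-1}(x),e)$. Then $\gamma$ and the constant path $c_e$ are both lifts of the constant path $c_x:I\to X$ starting at $e$, contradicting the unique path-lifting property. Hence unique path-lifting forces every path in every fiber to be constant, so each fiber is totally path disconnected.

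For the converse, assume every fiber of $p$ is totally path disconnected and let $\wt{\alpha}_1,\wt{\alpha}_2:(I,0)\to(E,e)$ be two lifts of a common path $\alpha:(I,0)\to(X,p(e))$. The idea is to interpolate between these two lifts inside $E$ along a homotopy sitting over $\alpha$ in $X$, then to collapse the interpolation fiberwise using total path-disconnectedness. Concretely, let $F:I\times I\to X$ be the homotopy $F(s,t)=\alpha(s)$ (constant in the second coordinate) and define a partial lift $\wt{F}_0$ on
\[
A \;=\; (I\times\{0\})\,\cup\,(\{0\}\times I)\,\cup\,(I\times\{1\}) \,\subseteq\, I^2
\]
by $\wt{F}_0(s,0)=\wt{\alpha}_1(s)$, $\wt{F}_0(0,t)=e$, and $\wt{F}_0(s,1)=\wt{\alpha}_2(s)$. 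These three pieces agree at their overlaps $(0,0)$ and $(0,1)$, where each evaluates to $e$, so $\wt{F}_0$ is a continuous partial lift of $F$.

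The key step is to extend $\wt{F}_0$ to a full lift $\wt{F}:I^2\to E$ of $F$. A self-homeomorphism of $I^2$ carries the pair $(I^2,A)$ onto the pair $(I^2,I\times\{0\})$ (both subspaces are arcs in $\partial I^2$), and conjugating the data by this homeomorphism reduces the extension problem to the homotopy lifting property of a Serre fibration with $Z=I$, which produces the desired $\wt{F}$. Once $\wt{F}$ exists, for each fixed $s\in I$ the map $t\mapsto \wt{F}(s,t)$ is a continuous path inside the fiber $p^{-1}(\alpha(s))$ running from $\wt{\alpha}_1(s)$ to $\wt{\alpha}_2(s)$; total path-disconnectedness forces this path to be constant, so $\wt{\alpha}_1(s)=\wt{\alpha}_2(s)$ for all $s$, as required. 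The only mildly delicate point in this outline is the reduction from the cube-based definition of a Serre fibration to the relative lifting problem for the pair $(I^2,A)$; this is routine once one exhibits the appropriate self-homeomorphism of $I^2$, after which the definition of Serre fibration applies directly.
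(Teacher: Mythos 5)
Your argument is correct and is essentially the proof the paper cites from Spanier (2.2.5): the forward direction by comparing a fiber path with the constant lift, and the converse by lifting the stationary homotopy $F(s,t)=\alpha(s)$ over the three-sided subset $A\subseteq\partial I^2$ via the homeomorphism of pairs $(I^2,A)\cong(I^2,I\times\{0\})$ and then collapsing the vertical paths using total path-disconnectedness of the fibers. The only caveat is that you need the homotopy lifting property in its standard form, where the lift is required to extend the prescribed initial map on $Z\times\{0\}$, which is the intended (though not literally stated) reading of the paper's definition.
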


If a map $p:E\to X$ has the path-covering property and also the homotopy lifting property with respect to $I$, then all path-homotopies in $X$ lift uniquely (rel. basepoint) to path-homotopies in $E$. Hence, standard arguments in covering space theory give the following lemma.

\begin{lemma}\label{basicliftingpropertieslemma}
Suppose $p:E\to X$ has the path-covering property and the homotopy lifting property with respect to $I$, e.g. if $p$ is a Serre fibration with totally path-disconnected fibers. Then for all $e\in E$
\begin{enumerate}
\item the induced homomorphism $p_{\#}:\pi_1(E,e)\to \pi_1(X,p(e))$ is injective,
\item the unique lift $\wt{\alpha}\in P(E,e)$ of a loop $\alpha\in\Omega(X,p(e))$ is a loop based at $e$ if and only if $[\alpha]\in p_{\#}(\pi_1(E,e))$.
\end{enumerate}
Moreover, if $p_{\#}:\pi_1(E,e)\to \pi_1(X,p(e))$ is surjective, then $p$ is a continuous bijection.
\end{lemma}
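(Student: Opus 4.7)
The plan is to first establish the observation stated immediately before the lemma, namely that path-homotopies in $X$ lift uniquely rel.\ basepoint to path-homotopies in $E$; once this is in hand, all three conclusions follow by reenacting the standard covering-theoretic arguments. For the preliminary observation, given a path-homotopy $H:I\times I\to X$ rel.\ endpoints between loops $\alpha,\beta\in\Omega(X,p(e))$, I would take the unique lift $\wt{\alpha}\in P(E,e)$ and apply the HLP with respect to $I$ to the pair $(\wt{\alpha},H)$, obtaining $\wt{H}:I\times I\to E$ with $p\circ\wt{H}=H$ and $\wt{H}(\cdot,0)=\wt{\alpha}$. To see that $\wt H$ is itself a path-homotopy rel.\ endpoints ending at $\wt\beta$, I examine its three remaining edge paths: $s\mapsto\wt{H}(0,s)$ projects to $c_{p(e)}$ and starts at $e$, so by unique path-lifting it equals $c_e$; $s\mapsto\wt{H}(s,1)$ projects to $\beta$ and starts at $e$, so it equals $\wt\beta$; and $s\mapsto\wt{H}(1,s)$ projects to a constant path and starts at $\wt\alpha(1)$, so again by unique path-lifting it is constant. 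Uniqueness of $\wt H$ among path-homotopies rel.\ basepoint lifting $H$ follows by applying unique path-lifting to each vertical slice.

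For (1), if $\gamma\in\Omega(E,e)$ satisfies $p_{\#}[\gamma]=1$, fix a null-homotopy of $p\circ\gamma$ in $X$ rel.\ endpoints and lift it by the preceding paragraph; the lifted homotopy is a null-homotopy of $\gamma$ rel.\ endpoints, so $[\gamma]=1$. For (2), if $[\alpha]=p_{\#}[\gamma]$ for some $\gamma\in\Omega(E,e)$, lifting a rel.\ endpoints homotopy between $\alpha$ and $p\circ\gamma$ yields $\wt\alpha\simeq\gamma$ rel.\ endpoints, hence $\wt\alpha(1)=\gamma(1)=e$; the converse is immediate since $\wt\alpha\in\Omega(E,e)$ implies $[\alpha]=[p\circ\wt\alpha]=p_{\#}[\wt\alpha]$.

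For (3), continuity of $p$ is given, and surjectivity of $p$ follows, as noted in the text, from path-connectedness of $X$ together with the path-covering property. For injectivity, suppose $e_1,e_2\in E$ satisfy $p(e_1)=p(e_2)$, choose paths $\gamma_i$ in $E$ from $e$ to $e_i$, and consider the loop $\alpha=(p\circ\gamma_1)\cdot(p\circ\gamma_2)^{-}\in\Omega(X,p(e))$. Surjectivity of $p_{\#}$ places $[\alpha]$ in $p_{\#}(\pi_1(E,e))$, so by (2) the unique lift $\wt\alpha$ starting at $e$ is a loop at $e$. By unique concatenated lifting, the first half of $\wt\alpha$ is $\gamma_1$ (ending at $e_1$) and the second half is the unique lift $\sigma$ of $(p\circ\gamma_2)^{-}$ starting at $e_1$ and, since $\wt\alpha$ closes up, ending at $e$. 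Then $\sigma^{-}$ is a lift of $p\circ\gamma_2$ starting at $e$, forcing $\sigma^{-}=\gamma_2$ by uniqueness, and consequently $e_1=\sigma(0)=\gamma_2(1)=e_2$.

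I do not anticipate any substantial obstacle; the entire lemma reduces to the preliminary claim about lifting path-homotopies, after which the arguments mirror the classical covering space proofs. The only delicate point in the preliminary claim is verifying that the right edge of the lifted square is constant, which is where the interaction between the HLP on $I$ (providing existence of the lifted square) and the path-covering property (supplying the uniqueness needed to pin down the three other edges) is essential.
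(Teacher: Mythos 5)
Your proposal is correct and follows exactly the route the paper intends: the paper itself only remarks that the hypotheses yield unique lifting of path-homotopies rel.\ basepoint and then defers to ``standard arguments in covering space theory,'' and your write-up is a faithful and accurate fleshing-out of precisely those arguments (the homotopy-lifting square with its three edges pinned down by unique path-lifting, followed by the classical deductions of (1), (2), and injectivity of $p$). No gaps.
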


\section{Maps with the continuous lifting property}\label{sectionbasictheory}

The goal of this section is to develop the basic properties of maps with the following property.

\begin{definition}\label{maindefs2}
A map $p:E\to X$ has the \textit{continuous path-covering property} if for every $e\in E$, the induced function $P(p):P(E,e)\to P(X,p(e))$ is a homeomorphism.
\end{definition}

\begin{remark}
Certainly, we have: 
\[\text{(1) continuous path-covering $\Rightarrow$ (2) path-covering $\Rightarrow $ (3) unique path-lifting.}\]
However, none of the reverse implications hold in general. For instance, any restriction of a covering projection, which is not a covering projection itself satisfies (3) but not (2). The generalized universal covering of the Hawaiian earring constructed in \cite{FZ07} satisfies (2) but not (1) and is described in more detail below (see Example \ref{heexample}).
\end{remark}

\begin{proposition}\label{tpdfibersprop}
If $p:E\to X$ has the unique path-lifting property, then for every $x\in X$ the fiber $p^{-1}(x)$ is totally path disconnected (and hence $T_1$). Moreover, if $X$ is $T_1$, then so is $E$.
\end{proposition}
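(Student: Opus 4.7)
The plan is to handle the three assertions in sequence. For the first, I would take an arbitrary path $\alpha:I\to p^{-1}(x)$ and observe that $p\circ\alpha=c_x$. Since the constant path $c_{\alpha(0)}\in P(E,\alpha(0))$ also satisfies $p\circ c_{\alpha(0)}=c_x$, the unique path-lifting property (applied with basepoint $\alpha(0)$) forces $\alpha=c_{\alpha(0)}$. Hence every path in $p^{-1}(x)$ is constant, so the fiber is totally path disconnected.

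For the parenthetical claim that every totally path-disconnected space $Y$ is $T_1$, I would argue by contrapositive. Assume $Y$ is not $T_1$, so there exist distinct $a,b\in Y$ with $a\in\overline{\{b\}}$, meaning every open neighborhood of $a$ contains $b$. Define $\alpha:I\to Y$ by $\alpha(0)=a$ and $\alpha(t)=b$ for $t>0$. To check continuity I would inspect an arbitrary open $V\subseteq Y$: the case $a\in V\not\ni b$ is ruled out by $a\in\overline{\{b\}}$, and the remaining three cases give $\alpha^{-1}(V)\in\{\emptyset,(0,1],[0,1]\}$, each open in $I$. Thus $\alpha$ is a non-constant path, contradicting total path-disconnectedness of $Y$.

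For the ``moreover'' statement, I would show that every singleton $\{e\}\subseteq E$ is closed. Fix $e'\neq e$ in $E$. If $p(e')\neq p(e)$, then $T_1$-ness of $X$ yields an open $U\subseteq X$ with $p(e')\in U$ and $p(e)\notin U$, so $p^{-1}(U)$ is an open neighborhood of $e'$ missing $e$. If $p(e')=p(e)$, write $F=p^{-1}(p(e))$. By the first two steps $F$ is $T_1$ in its subspace topology, so there is an open $V\subseteq F$ with $e'\in V$ and $e\notin V$. Writing $V=W\cap F$ for some open $W\subseteq E$, the containment $e\in W$ would imply $e\in W\cap F=V$, contradicting the choice of $V$; hence $W$ is an open neighborhood of $e'$ avoiding $e$.

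The only nontrivial step is the totally path-disconnected $\Rightarrow T_1$ implication, whose delicate point is the case analysis verifying continuity of the ``jump'' path; the remaining pieces are straightforward applications of the definitions and of the first part.
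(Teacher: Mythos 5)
Your proof is correct and follows essentially the same route as the paper: the fiber argument via the two competing lifts of a constant path, the observation that a non-$T_1$ space carries a non-constant path (you construct the ``jump'' path explicitly where the paper cites path-connectedness of a non-discrete two-point subspace), and the deduction of $T_1$-ness of $E$ from $T_1$-ness of $X$ and of the fibers. The only cosmetic difference is in the last step, where the paper notes that $\{e\}$ is closed in the closed fiber $p^{-1}(p(e))$ and hence closed in $E$, while you verify the same fact pointwise by producing neighborhoods.
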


\begin{proof}
If $p^{-1}(x)$ admitted a non-constant path $\alpha:I\to p^{-1}(x)$, then $\alpha$ and the constant path at $\alpha(0)$ are distinct lifts of $p\circ\alpha$; a violation of unique path lifting. Every totally path-disconnected space is $T_1$ since a non-$T_1$ space must contain a homeomorphic copy of a non-discrete 2-point space, which is necessarily path-connected. Moreover, if $p(e)=x$ and $\{x\}$ is closed, then $\{e\}$ is closed in the closed fiber $p^{-1}(x)$ and thus closed in $E$. 
\end{proof}

Recall that an infinite product of covering projections need not be a covering projection, e.g. any infinite power of the exponential map $\bbr\to S^1$ provides an example.

\begin{lemma}
The following classes of maps are closed under arbitrary direct products:
\begin{enumerate}
\item maps with the unique path-lifting property,
\item maps with the path-covering property,
\item maps with the continuous path-covering property,
\item Hurewicz fibrations with totally path-disconnected fibers,
\item Serre fibrations with totally path-disconnected fibers.
\end{enumerate}
\end{lemma}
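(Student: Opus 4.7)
The plan is to reduce everything to the canonical homeomorphism that identifies path spaces of a product with products of path spaces, which turns the product map into a product of maps on path spaces. Writing $p=\prod_{j\in J} p_j$ for the product of maps $p_j\colon E_j\to X_j$, and fixing a basepoint $e=(e_j)\in\prod E_j$ with image $x=(p_j(e_j))$, the exponential law for the compact-open topology gives a natural homeomorphism $P\bigl(\prod_j X_j,x\bigr)\cong \prod_j P(X_j,p_j(e_j))$ sending $\alpha$ to $(\pi_j\circ\alpha)_j$, and similarly for $\prod E_j$. Under these identifications the induced function $P(p)$ becomes $\prod_j P(p_j)$.

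From this identification, (1), (2), and (3) are immediate: injectivity, bijectivity, and the property of being a homeomorphism are each preserved by arbitrary direct products in $\mathbf{Top}$. Thus if each $P(p_j)\colon P(E_j,e_j)\to P(X_j,p_j(e_j))$ is injective (resp.\ bijective, resp.\ a homeomorphism), the same holds for $P(p)$.

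For (4) and (5), first note the standard fact that a product of Hurewicz (resp.\ Serre) fibrations is a Hurewicz (resp.\ Serre) fibration. Given a lifting datum $(f,g)\colon Z\to \prod E_j$, $Z\times I\to \prod X_j$ (with $Z$ arbitrary, resp.\ $Z=I^n$), the universal property of the product reduces the problem to lifting each component $(\pi_j\circ f,\pi_j\circ g)$ through $p_j$ and reassembling; continuity of the combined lift follows from the universal property of the product topology. It remains to check that the fibers stay totally path-disconnected. For any $x=(x_j)\in\prod X_j$ one has $p^{-1}(x)=\prod_j p_j^{-1}(x_j)$, and a path $\gamma\colon I\to\prod_j p_j^{-1}(x_j)$ is constant if and only if each coordinate path $\pi_j\circ\gamma$ is constant; so a product of totally path-disconnected spaces is totally path-disconnected, completing (4) and (5).

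No step is really the main obstacle; the whole argument is formal once the path-space/product interchange is in hand. The only point meriting a line of care is the identification $P\bigl(\prod_j X_j\bigr)\cong \prod_j P(X_j)$ at the level of topologies, which is a direct application of the compact-open exponential law to the product and its projections, and the observation that the basepoint restriction behaves coordinatewise.
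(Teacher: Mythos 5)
Your proposal is correct and follows essentially the same route as the paper: the paper handles (1)--(3) by noting that the based path-space functor preserves direct products (citing Engelking), and (4)--(5) by the facts that products of maps with the homotopy lifting property with respect to $Z$ again have it and that products of totally path-disconnected spaces are totally path-disconnected. You have simply written out these same two observations in more detail.
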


\begin{proof}
The first three cases are clear since the based path-space functors $(X,x)\mapsto P(X,x)$ preserve direct products \cite[Proposition 3.4.5]{engelking}. The last two follow from the fact that (1) if maps $p_j:X_j\to Y_j$ have the homotopy lifting property with respect to a space $Z$, then so does the product map $\prod_{j}p_j$ and (2) products of totally path-disconnected spaces are totally path disconnected.
\end{proof}

Covering projections also fail to be closed under function composition. Maps with the continuous path-covering property, in fact, satisfy the ``two-out-of-three" condition in the next lemma. Since the various parts of the statement are straightforward to verify from the definitions, we omit the proof.

\begin{lemma}\label{compositionlemma}
Suppose $f:X\to Y$ and $g:Y\to Z$ are maps of non-empty, path-connected spaces.
\begin{enumerate}
\item If $f$ and $g$ have the continuous path-covering property, then so does $g\circ f$.
\item If $g$ and $g\circ f$ have the continuous path-covering property, then so does $f$.
\item If $g$ is surjective and $f$ and $g\circ f$ have the continuous path-covering property, then so does $g$.
\end{enumerate}
Moreover, the statement holds if we replace ``continuous path-covering property" with ``path-covering property."
\end{lemma}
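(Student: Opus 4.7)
The plan is to reduce everything to the elementary observation that the based path-space construction is a covariant functor: for any composable pair of maps, $P(g\circ f)=P(g)\circ P(f)$ as maps of based path spaces. Hence, once a basepoint $x\in X$ is fixed, the three conclusions are nothing more than the standard 2-out-of-3 property for homeomorphisms (or bijections) applied to the factorization
\[ P(g\circ f)\colon P(X,x)\xrightarrow{P(f)} P(Y,f(x))\xrightarrow{P(g)} P(Z,g(f(x))).\]

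For (1), I would fix $x\in X$ and simply compose: since both $P(f)\colon P(X,x)\to P(Y,f(x))$ and $P(g)\colon P(Y,f(x))\to P(Z,g(f(x)))$ are homeomorphisms by hypothesis, their composite $P(g\circ f)$ is too. For (2), again fix $x\in X$; now $P(g\circ f)$ and $P(g)$ are both homeomorphisms, so $P(f)=P(g)^{-1}\circ P(g\circ f)$ is a homeomorphism as well. These two cases are essentially automatic from functoriality.

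The only genuinely delicate part is (3), which I expect to be the main (mild) obstacle. Here the continuous path-covering property for $g$ must be checked at \emph{every} $y\in Y$, not merely at those of the form $f(x)$. To deal with this, I would first note that $f$ itself is automatically surjective: this follows from the remark after Definition \ref{maindefs1}, since $f$ has the path-covering property, $X$ is non-empty, and $Y$ is path-connected. Thus, given an arbitrary $y\in Y$, choose $x\in X$ with $f(x)=y$; then $P(f)\colon P(X,x)\to P(Y,y)$ and $P(g\circ f)\colon P(X,x)\to P(Z,g(y))$ are both homeomorphisms, and $P(g)=P(g\circ f)\circ P(f)^{-1}$ is a homeomorphism on $P(Y,y)$, as required. (The surjectivity of $g$ is listed as a hypothesis but is in fact automatic from the same remark applied to $g\circ f$.)

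Finally, the ``path-covering" analogue is obtained by the identical argument after everywhere replacing ``homeomorphism on based path spaces'' with ``bijection on based path spaces,'' since the 2-out-of-3 property for bijections of sets is equally trivial and the functoriality identity $P(g\circ f)=P(g)\circ P(f)$ does not depend on topology.
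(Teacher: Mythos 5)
Your proof is correct, and since the paper explicitly omits the proof as ``straightforward to verify from the definitions,'' your functoriality argument $P(g\circ f)=P(g)\circ P(f)$ plus the 2-out-of-3 property for homeomorphisms (resp.\ bijections) is exactly the intended verification. You also correctly handle the one genuine subtlety in (3) --- that the property for $g$ must be checked at every $y\in Y$, which you resolve via the automatic surjectivity of $f$ --- so nothing is missing.
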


\begin{remark}
Serre/Hurewicz fibrations are closed under composition and if $g$ and $g\circ f$ are Serre/Hurewicz fibrations, then so is $f$. Although the authors do not know of a counterexample, we find it unlikely that these classes of maps are closed under the third combination.
\end{remark}

The \textit{cone} over a space $X$ is the quotient space $CX=X\times I/X\times \{0\}$. The point $v_0\in CX$, which is the image of $X\times \{0\}$, is taken to be the basepoint of $CX$.

\begin{definition}
Let $(J,\leq)$ be a directed set and $K=J\cup\{\infty\}$ be the space obtained by adding one maximal point. Give $K$ the topology generated by the sets $\{k\}$ and $V_{k}=\{\infty\}\cup\{j\in J\mid j>k\}$ for $k<\infty$. The \textit{directed arc-fan over }$J$ is the cone over $K$, i.e. the quotient space $F(J)=K\times I/K\times \{0\}$ with basepoint $v_0$. We typically will identify $K\times (0,1]$ with it's image in $F(J)$.
\end{definition}

\begin{remark}\label{netstofansremark}
Standard exponential laws for spaces imply that the convergent nets $\{\alpha_j\}_{j\in J}\to \alpha$ in $P(X,x)$ are in bijective correspondence with based maps $(F(J),v_0)\to (X,x)$. Hence, a map $p:E\to X$ has the continuous path-covering property if and only if for every $e\in E$ and directed set $J$, the map $F:(E,e)^{(F(J),v_0)}\to (X,p(e))^{(F(J),v_0)}$, $F(\beta)=p\circ\beta$ is a bijection. If $X$ is a metric space, then the compact-open topology on $P(X,x)$ agrees with the topology of uniform convergence and one need only consider maps $F(\omega)\to X$ on the directed fan $F(\omega)$ indexed by the natural numbers.
\end{remark}

In the remainder of this section, we show that maps $E\to X$ with the continuous path-covering property also lift maps $Z\to X$ from many other spaces $Z$ both uniquely and continuously.

\begin{lemma}\label{coneliftinglemma}
Let $Z$ be a compact Hausdorff space and $z\in Z$. If $p:E\to X$ has the continuous path-covering property, then for every $e\in E$, the induced map $F:(E,e)^{(CZ,v_0)}\to (X,p(e))^{(CZ,v_0)}$, $F(\beta)=p\circ\beta$ is a homeomorphism. 
\end{lemma}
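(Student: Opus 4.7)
The plan is to reduce the statement to the hypothesis by a sequence of standard exponential-law identifications. Write $CZ=(Z\times I)/(Z\times\{0\})$. By the universal property of the quotient, a based map $\beta:(CZ,v_0)\to(E,e)$ is the same data as a map $\tilde\beta:Z\times I\to E$ that is constant equal to $e$ on $Z\times\{0\}$, and this correspondence is a homeomorphism from $(E,e)^{(CZ,v_0)}$ onto the closed subspace $C_e(Z\times I,E)\subseteq E^{Z\times I}$ of such maps. So the first step is to replace $(E,e)^{(CZ,v_0)}$ and $(X,p(e))^{(CZ,v_0)}$ by their counterparts in $E^{Z\times I}$ and $X^{Z\times I}$.

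Next I would invoke the exponential law. Since $I$ is locally compact Hausdorff, the canonical map
\[
\Phi:E^{Z\times I}\longrightarrow (E^I)^Z,\qquad \Phi(\tilde\beta)(z)(t)=\tilde\beta(z,t),
\]
is a homeomorphism (see \cite[Theorem 3.4.3]{engelking} or the corresponding statement in Spanier). Under $\Phi$, the condition $\tilde\beta(Z\times\{0\})=\{e\}$ becomes the condition that the image of the adjoint map lies in $P(E,e)\subseteq E^I$, so $\Phi$ restricts to a homeomorphism
\[
(E,e)^{(CZ,v_0)}\;\xrightarrow{\;\cong\;}\;P(E,e)^{Z},
\]
and likewise to a homeomorphism $(X,p(e))^{(CZ,v_0)}\cong P(X,p(e))^{Z}$. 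Here I use the elementary fact that, since $P(E,e)$ inherits the compact-open topology as a subspace of $E^I$, the compact-open topology on $P(E,e)^Z$ agrees with the subspace topology inherited from $(E^I)^Z$.

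Finally I would check that the map $F$ of the lemma corresponds, under these two identifications, to postcomposition with the induced map $P(p):P(E,e)\to P(X,p(e))$. This is an immediate unwinding of the formulas: if $\beta\leftrightarrow\hat\beta$, then $(p\circ\beta)(z,t)=p(\tilde\beta(z,t))=P(p)(\hat\beta(z))(t)$, so $\widehat{p\circ\beta}=P(p)\circ\hat\beta$. By the continuous path-covering property, $P(p)$ is a homeomorphism, hence postcomposition by $P(p)$ is a homeomorphism of mapping spaces $P(E,e)^Z\to P(X,p(e))^Z$, and transporting back through the exponential-law identifications gives that $F$ is a homeomorphism.

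There is no serious obstacle here beyond keeping the topological identifications honest; the proof is essentially a diagram of adjunctions. The only point that requires a little care is invoking the exponential law with the correct locally-compact hypothesis (one uses it on the $I$-factor, not on $Z$), and verifying that the compact-open topology on $P(E,e)^Z$ does coincide with the subspace topology from $(E^I)^Z$, both of which are standard.
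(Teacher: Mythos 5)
Your proof is correct and follows essentially the same route as the paper's: both identify $(E,e)^{(CZ,v_0)}$ with $P(E,e)^Z$ via the quotient description of the cone and the exponential law, and then observe that $F$ corresponds to the homeomorphism $P(p)^Z$. The only cosmetic difference is that you spell out where local compactness and the Hausdorff hypothesis on $Z$ enter, which the paper leaves implicit.
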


\begin{proof}
Fix $e\in E$. By assumption, $P(p):P(E,e)\to P(X,p(e))$ is a homeomorphism so it follows from functorality that $P(p)^Z:P(E,e)^Z\to P(X,p(e))^Z$ is a homeomorphism. We call upon some elementary facts related to exponential laws in the category of topological spaces. Since $Z$ is compact Hausdorff, for any based space $(A,a)$, the mapping space $P(A,a)^Z$ is naturally homeomorphic to the relative mapping space $(A,a)^{(Z\times I,Z\times \{0\})}$, which is, in turn, naturally homeomorphic to the based mapping space $(A,a)^{(CZ,v_0)}$. It follows that $F:(E,e)^{(CZ,v_0)}\to (X,p(e))^{(CZ,v_0)}$ is a homeomorphism.
\[\xymatrix{
P(E,e)^{Z} \ar[d]_{\cong} \ar[rr]^-{P(p)^Z} & & P(X,p(e))^{Z} \ar[d]^{\cong}\\
(E,e)^{(Z\times I,Z\times \{0\})} \ar[d]_{\cong} \ar[rr]^-{p^{(Z\times I,Z\times \{0\})}} && (X,p(e))^{(Z\times I,Z\times \{0\})} \ar[d]^{\cong}\\
(E,e)^{(CZ,v_0)} \ar[rr]_-{F} && (X,p(e))^{(CZ,v_0)}
}\]
\end{proof}

Since $I^{n+1}\cong CI^n$, we obtain the following corollary where $\bfz\in I^n$ denotes the origin.

\begin{corollary}\label{continuoushomotopycorollary}
If $p:E\to X$ has the continuous path-covering property, then for every $e\in E$ and $n\in\bbn$, the induced map $p^{(I^n,\bfz)}:(E,e)^{(I^n,\bfz)}\to (X,p(e))^{(I^n,\bfz)}$ is a homeomorphism.
\end{corollary}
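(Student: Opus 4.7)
The plan is to reduce Corollary \ref{continuoushomotopycorollary} directly to Lemma \ref{coneliftinglemma} via a based homeomorphism $(CI^{n-1},v_0)\cong(I^n,\bfz)$.

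Given $n\geq 1$, the space $Z=I^{n-1}$ is compact Hausdorff, so Lemma \ref{coneliftinglemma} applies and yields that the induced map
\[F\colon (E,e)^{(CI^{n-1},v_0)} \longrightarrow (X,p(e))^{(CI^{n-1},v_0)},\qquad F(\beta)=p\circ\beta,\]
is a homeomorphism. It therefore suffices to exhibit a based homeomorphism $(CI^{n-1},v_0)\cong(I^n,\bfz)$ and transport the conclusion across it using the contravariant mapping-space functor.

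To produce the required based homeomorphism, I would realize the cone concretely as the Euclidean pyramid
\[P_n \;=\; \{(tx_1,\ldots,tx_{n-1},t)\in\bbr^n \colon x\in I^{n-1},\ t\in I\},\]
that is, the convex hull in $\bbr^n$ of $\bfz$ and the top face $I^{n-1}\times\{1\}$. The map $I^{n-1}\times I\to P_n$ sending $(x,t)\mapsto(tx_1,\ldots,tx_{n-1},t)$ is a continuous surjection that collapses $I^{n-1}\times\{0\}$ to the single point $\bfz$ and is injective elsewhere, so by compactness it descends to a homeomorphism $CI^{n-1}\to P_n$ carrying $v_0$ to $\bfz$. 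Since $P_n$ and $I^n$ are each compact convex bodies in $\bbr^n$ with nonempty interior, they are mutually homeomorphic; and because $\bfz$ is a vertex (in particular, a boundary point) of each, a routine radial-rescaling argument produces a homeomorphism $P_n\cong I^n$ fixing $\bfz$.

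Applying the functors $(E,e)^{(-,-)}$ and $(X,p(e))^{(-,-)}$ to this based homeomorphism of domains gives a commutative square whose vertical maps are homeomorphisms and whose top horizontal map is $F$; the bottom horizontal map is precisely $p^{(I^n,\bfz)}$, which is therefore a homeomorphism as well. For $n=0$ (if included in $\bbn$), both mapping spaces are singletons and the conclusion is immediate. The only mild technicality is matching the apex $v_0$ to the corner $\bfz$ in the homeomorphism $CI^{n-1}\cong I^n$, but this is purely a matter of basepoint bookkeeping in convex-polytope topology and is not a serious obstacle.
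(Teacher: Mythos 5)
Your proposal is correct and follows essentially the same route as the paper, which derives the corollary from Lemma \ref{coneliftinglemma} via the single observation that $I^{n+1}\cong CI^{n}$ (with the apex corresponding to the origin). You merely make explicit the based homeomorphism that the paper leaves as standard.
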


\begin{remark}\label{liftremark}
Note that Corollary \ref{continuoushomotopycorollary} implies that maps with the continuous path-covering property have the homotopy lifting property with respect to $I$ and thus the conclusions of Lemma \ref{basicliftingpropertieslemma} apply to all such maps. 
\end{remark}

The following lemma is essentially \cite[Lemma 2.5]{Brsemi}. We give a direct statement and proof that avoids groupoid terminology.

\begin{lemma}\label{liftinglemma}
If $p:E\to X$ has the continuous path-covering property, $p(e_0)=x_0$, and $(Z,z_0)$ is a based space such that $ev_1:P(Z,z_0)\to Z$ is quotient, then a map $f:(Z,z_0)\to (X,x_0)$ has a unique continuous lift $\wt{f}:(Z,z_0)\to (E,e_0)$ if and only if $f_{\#}(\pi_1(Z,z_0))\leq p_{\#}(\pi_1(E,e_0))$.
\end{lemma}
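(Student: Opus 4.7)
The plan is to follow the classical path-lifting construction from covering space theory, with two places where the hypotheses do essential work: the subgroup condition $f_{\#}(\pi_1(Z,z_0)) \leq p_{\#}(\pi_1(E,e_0))$ ensures well-definedness of the pointwise construction, and the quotient hypothesis on $ev_1:P(Z,z_0)\to Z$ ensures continuity of the resulting map. The necessity of the subgroup condition is immediate from functoriality: if a based continuous lift $\widetilde{f}$ exists, then $f_{\#}=p_{\#}\circ\widetilde{f}_{\#}$, so $f_{\#}(\pi_1(Z,z_0))\leq p_{\#}(\pi_1(E,e_0))$.

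For sufficiency, let $L=P(p)^{-1}:P(X,x_0)\to P(E,e_0)$ denote the continuous inverse supplied by Definition \ref{maindefs2}. Since $ev_1:P(Z,z_0)\to Z$ is quotient, it is in particular surjective, so for each $z\in Z$ there exists $\gamma\in P(Z,z_0)$ with $\gamma(1)=z$; I would then set $\widetilde{f}(z):=L(f\circ\gamma)(1)$. To see this is independent of the choice of $\gamma$, take two paths $\gamma_1,\gamma_2\in P(Z,z_0)$ ending at $z$; then $f\circ(\gamma_1\cdot\gamma_2^{-})$ is a loop in $X$ at $x_0$ whose homotopy class lies in $f_{\#}(\pi_1(Z,z_0))\leq p_{\#}(\pi_1(E,e_0))$. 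By Remark \ref{liftremark}, the hypothesis on $p$ in Lemma \ref{basicliftingpropertieslemma} is met, and part (2) of that lemma implies this loop lifts through $p$ to a loop based at $e_0$. Decomposing this loop lift as a concatenation of the lifts of $f\circ\gamma_1$ and $f\circ\gamma_2^{-}$ forces $L(f\circ\gamma_1)$ and $L(f\circ\gamma_2)$ to terminate at the same point. By construction $\widetilde{f}(z_0)=e_0$ and $p\circ\widetilde{f}=f$.

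The main obstacle is continuity of $\widetilde{f}$, and this is precisely where the quotient hypothesis on $ev_1$ is indispensable. Because $ev_1:P(Z,z_0)\to Z$ is a topological quotient map, continuity of $\widetilde{f}$ reduces to continuity of the composite $\widetilde{f}\circ ev_1:P(Z,z_0)\to E$. Unwinding the definitions yields the factorization
\[
\widetilde{f}\circ ev_1 \;=\; ev_1\circ L\circ P(f),
\]
in which $P(f):P(Z,z_0)\to P(X,x_0)$ is continuous (being post-composition with the continuous map $f$), $L$ is continuous by the continuous path-covering property, and $ev_1:P(E,e_0)\to E$ is continuous. Finally, uniqueness follows from unique path-lifting: any based continuous lift $\widetilde{f}'$ satisfies $\widetilde{f}'\circ\gamma=L(f\circ\gamma)$ for every $\gamma\in P(Z,z_0)$, since both sides are lifts of $f\circ\gamma$ starting at $e_0$; evaluating at $t=1$ and using surjectivity of $ev_1$ gives $\widetilde{f}'=\widetilde{f}$.
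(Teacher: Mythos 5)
Your proposal is correct and follows essentially the same route as the paper: define $\wt{f}(z)$ as the endpoint of the lift $P(p)^{-1}(f\circ\gamma)$, use the subgroup condition together with Lemma \ref{basicliftingpropertieslemma} for well-definedness, and deduce continuity from the factorization $\wt{f}\circ ev_1=ev_1\circ P(p)^{-1}\circ P(f)$ via the universal property of the quotient map $ev_1:P(Z,z_0)\to Z$. The only difference is that you spell out the necessity direction and the well-definedness argument in more detail than the paper does.
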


\begin{proof}
By Corollary \ref{continuoushomotopycorollary}, $p$ uniquely lifts paths and path-homotopies. Hence, the condition $f_{\#}(\pi_1(Z,z_0))\leq p_{\#}(\pi_1(E,e_0))$ is equivalent to the well-definedness of the lift function $\wt{f}$ with the following standard definition: for $z\in Z$, let $\gamma\in P(Z,z_0)$ be a path ending at $z$, $\wt{f\circ\gamma}\in P(E,e_0)$ be the unique lift of $f\circ\gamma$, and set $\wt{f}(z)=\wt{f\circ\gamma}(1)$. Since $ev_1:P(Z,z_0)\to Z$ is quotient, $Z$ is path connected. This makes the uniqueness of $\wt{f}$ clear once we verify continuity. Let $P(p)^{-1}:P(X,x_0)\to P(E,e_0)$ be the continuous lifting homeomorphism and consider the following diagram for which the commutativity is equivalent to the definition of $\wt{f}$.
\[\xymatrix{
P(Z,z_0) \ar[d]^{ev_1} \ar[r]^-{P(f)} & P(X,x_0) \ar[r]^-{P(p)^{-1}}_-{\cong} & P(E,e_0) \ar[d]_{ev_1} \\
Z \ar[rr]_{\wt{f}} && E
}\]
Since the top composition $P(Z,z_0)\to E$ is continuous and $ev_1:P(Z,z_0)\to Z$ is assumed to be quotient, $\wt{f}$ is continuous by the universal property of quotient maps.
\end{proof}

\begin{corollary}\label{cpcequivalencetheorem}
If $p_1:(E_1,e_1)\to (X,x_0)$ and $p_2:(E_2,e_2)\to (X,x_0)$ are maps with the continuous path-covering property such that
\begin{enumerate}
\item $ev_1:P(E_1,e_1)\to E_1$ and $ev_1:P(E_2,e_2)\to E_2$ are quotient,
\item $(p_1)_{\#}(\pi_1(E_1,e_1))=(p_2)_{\#}(\pi_1(E_2,e_2))$,
\end{enumerate}
then there exists a unique homeomorphism $h:(E_1,e_1)\to (E_2,e_2)$ such that $p_2\circ h=p_1$.
\end{corollary}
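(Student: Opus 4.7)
The plan is to apply Lemma \ref{liftinglemma} twice, once in each direction, and then use the uniqueness clause of that lemma to verify that the two resulting maps are mutually inverse.

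First, I would apply Lemma \ref{liftinglemma} with $p = p_2$, $Z = E_1$, $z_0 = e_1$, and $f = p_1$. The hypothesis that $ev_1: P(E_1,e_1) \to E_1$ is quotient is exactly the source condition of the lemma, and the subgroup hypothesis $(p_1)_\#(\pi_1(E_1,e_1)) = (p_2)_\#(\pi_1(E_2,e_2))$ gives the required $\pi_1$-containment. This yields a unique based continuous map $h: (E_1,e_1) \to (E_2,e_2)$ with $p_2 \circ h = p_1$. Reversing the roles of $p_1$ and $p_2$, the same lemma produces a unique based continuous map $k: (E_2,e_2) \to (E_1,e_1)$ with $p_1 \circ k = p_2$.

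Next, I would show $k \circ h = \mathrm{id}_{E_1}$ and $h \circ k = \mathrm{id}_{E_2}$ by a uniqueness argument. The composition $k \circ h: (E_1,e_1) \to (E_1,e_1)$ is a based lift of $p_1$ through $p_1$, since $p_1 \circ (k \circ h) = p_2 \circ h = p_1$. But $\mathrm{id}_{E_1}$ is also such a based lift of $p_1$ through $p_1$. Applying Lemma \ref{liftinglemma} a third time with $p = p_1$, $Z = E_1$, $f = p_1$ (whose hypotheses are again satisfied because $p_1$ has the continuous path-covering property and $ev_1: P(E_1,e_1)\to E_1$ is quotient), the uniqueness clause forces $k \circ h = \mathrm{id}_{E_1}$. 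The identical argument with the roles of $E_1$ and $E_2$ swapped gives $h \circ k = \mathrm{id}_{E_2}$. Hence $h$ is a homeomorphism, and its uniqueness over $X$ with $h(e_1) = e_2$ is immediate from the uniqueness of the lift provided by Lemma \ref{liftinglemma}.

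I do not anticipate any real obstacle here: the entire argument is the standard covering-theoretic equivalence argument, and the only subtlety is recognizing that the uniqueness clause of Lemma \ref{liftinglemma} applies with $p = p_1$ and $f = p_1$, which requires the quotient hypothesis on $ev_1: P(E_1,e_1) \to E_1$ (and symmetrically for $E_2$). Once that is in hand, the self-lift uniqueness that collapses $k \circ h$ to the identity is automatic.
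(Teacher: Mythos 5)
Your proof is correct and is exactly the intended derivation: the paper states this as an immediate corollary of Lemma \ref{liftinglemma} without writing out a proof, and the standard two-way lifting plus uniqueness-of-lifts argument you give is precisely how it follows. No gaps.
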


\begin{theorem}\label{cogenliftingtheorem}
Suppose $p:E\to X$ has the continuous path-covering property, $e\in E$, and $(Z,z)$ is a path-connected space. Consider the map $F:(E,e)^{(Z,z)}\to (X,p(e))^{(Z,z)}$ given by $F(f)=p\circ f$.
\begin{enumerate}
\item If $Z$ is contractible, then $F$ is bijective,
\item If $Z$ is contractible and compact Hausdorff, then $F$ is a homeomorphism.
\end{enumerate}
\end{theorem}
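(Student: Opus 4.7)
The plan is to deduce part (1) directly from Lemma \ref{liftinglemma} and to establish part (2) by exhibiting $F^{-1}$ as an explicit composition of continuous maps, mirroring the exponential-law factorization used in Lemma \ref{coneliftinglemma}.

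For part (1), injectivity of $F$ would follow from unique path lifting together with path connectedness of $Z$: given two lifts $f_1,f_2$ of the same $g$ and any $w\in Z$, I pick a path $\gamma$ in $Z$ from $z$ to $w$; then $f_1\circ\gamma$ and $f_2\circ\gamma$ are paths in $E$ based at $e$ that both lift $g\circ\gamma$, so they agree, giving $f_1(w)=f_2(w)$. For surjectivity, I would invoke Lemma \ref{liftinglemma}. Its $\pi_1$-hypothesis is vacuous because $\pi_1(Z,z)=1$ by contractibility. For the quotient hypothesis on $ev_1:P(Z,z)\to Z$, I fix any contraction $H:Z\times I\to Z$ with $H(w,0)=z$ and $H(w,1)=w$; its adjoint $s:Z\to P(Z,z)$, $s(w)(t)=H(w,t)$, is a continuous section of $ev_1$, so $ev_1$ is a topological retraction and hence a quotient map.

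For part (2), $F$ is continuous since post-composition with $p$ is continuous. To prove $F^{-1}$ continuous, I would fix the contraction $H$ from part (1) and factor $F^{-1}$ as the composite
\[
(X,p(e))^{(Z,z)} \xrightarrow{\,g\,\mapsto\, g\circ H\,} (X,p(e))^{(Z\times I,\,Z\times\{0\})} \xrightarrow{\;\cong\;} P(X,p(e))^{Z} \xrightarrow{(P(p)^{-1})_{*}} P(E,e)^{Z} \xrightarrow{(ev_{1})_{*}} E^{Z}.
\]
The first arrow is pre-composition with the fixed map $H$ and is continuous in the compact-open topology; the second is the natural exponential-law homeomorphism used in the proof of Lemma \ref{coneliftinglemma}, valid because $Z$ is compact Hausdorff; the last two are post-compositions with the continuous maps $P(p)^{-1}$ (a homeomorphism by hypothesis) and $ev_1$. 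I would then check that the composite $g\mapsto\tilde g$ lands in the subspace $(E,e)^{(Z,z)}\subseteq E^Z$. The identity $p\circ\tilde g=g$ is immediate from the definition of $P(p)^{-1}$. The condition $\tilde g(z)=e$ requires the $P(p)^{-1}$-lift of the loop $t\mapsto g(H(z,t))$ to end at $e$; since $\pi_1(Z,z)=1$, this loop is null-homotopic in $Z$, so $[g\circ H(z,-)]$ is trivial in $\pi_1(X,p(e))$ and in particular lies in $p_{\#}(\pi_1(E,e))$, whence Lemma \ref{basicliftingpropertieslemma}(2) (applicable by Remark \ref{liftremark}) forces the lift to end at $e$.

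The main technical obstacle is the continuity of $F^{-1}$ via the exponential-law factorization, which is where the compact Hausdorff hypothesis on $Z$ is essential. The one subtle verification is that the composite lands in $(E,e)^{(Z,z)}$ rather than merely in $E^Z$, and this is precisely where contractibility (not just path connectedness) of $Z$ enters.
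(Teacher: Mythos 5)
Your proof is correct and follows essentially the same route as the paper's: part (1) is identical (the contraction gives a section of $ev_1:P(Z,z)\to Z$, injectivity comes from unique path lifting along paths from $z$, and surjectivity from Lemma \ref{liftinglemma}), while part (2) traces exactly the chain of maps the paper uses --- precomposition with the contraction/retraction, the exponential-law homeomorphism valid for compact Hausdorff $Z$, postcomposition with $P(p)^{-1}$ and $ev_1$ --- together with the same appeal to Lemma \ref{basicliftingpropertieslemma}(2) to verify the basepoint condition $\tilde g(z)=e$. The only difference is packaging: the paper shows $F$ is a topological embedding via the section $R(g)=g\circ r$ into $(E,e)^{(CZ,v_0)}$ and then separately re-proves surjectivity through the cone, whereas you assemble the same maps into an explicit continuous inverse and reuse part (1) for bijectivity, which is slightly more economical.
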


\begin{proof}
Since $Z$ is contractible, there is a section $s:Z\to P(Z,z)$ to the evaluation map $ev_1:P(Z,z)\to Z$. Thus, the latter is a quotient map. The injectivity of $F$ follows from the fact that $Z$ is path connected and $p$ has the unique path-lifting property. Since $Z$ is simply connected and $ev_1:P(Z,z)\to Z$ is quotient, Lemma \ref{liftinglemma} applies to give the surjectivity of $F$.

Note that Lemma \ref{coneliftinglemma} proves (2) in the case where $Z$ is a cone. For general contractible $Z$, there is a retraction $r:CZ\to Z$ such that $r(v_0)=z$. This means that for every space $(A,a)$, the induced map $R:(A,a)^{(Z,z)}\to (A,a)^{(CZ,v_0)}$, $R(g)=g\circ r$ is a section and therefore a topological embedding. Consider the naturality diagram
\[\xymatrix{
(E,e)^{(Z,z)} \ar[r]^-{F} \ar[d]_-{R} & (X,p(e))^{(Z,z)} \ar[d]^-{R}\\
(E,e)^{(CZ,v_0)} \ar[r]_-{p^{(CZ,v_0)}} & (X,p(e))^{(CZ,v_0)}
}\]
where the vertical maps are embeddings and the bottom map is a homeomorphism (recall Lemma \ref{coneliftinglemma}). It follows that $F$ is a topological embedding. To see that $F$ is onto, consider a map $g:(Z,z)\to (X,p(e))$. Since the bottom horizontal map is bijective, we may find a unique map $k:(CZ,v_0)\to (E,e)$ such that $p\circ k=R(g)=g\circ r$. If $i:Z\to CZ$ is the canonical inclusion, set $\wt{g}=k\circ i$. First, we must check that $\wt{g}(z)=e$ to consider $\wt{g}$ a based map. Consider the path $\alpha:I\to CZ$, $\alpha(t)=(z,t)$ from $v_0$ to the basepoint $i(z)$ and note that $r\circ\alpha:I\to Z$ is a null-homotopic loop based at $z$. Observe that $k\circ\alpha:I\to E$ starts at $e$ and $p\circ k\circ \alpha=g\circ r\circ\alpha $ is a null-homotopic loop based at $p(e)$. By Lemma \ref{basicliftingpropertieslemma}, the unique lift $k\circ\alpha$ of $g\circ r\circ\alpha$ starting at $e$ must be a loop, i.e. $k(v_0)=k(z)$. Thus $\wt{g}(z)=\wt{g}(r(v_0))=k\circ i(r(v_0))=k(z)=k(v_0)=e$. Finally, we have $F(\wt{g})=p\circ k\circ i=g\circ r\circ i=g$, completing the proof that $F:(E,e)^{(Z,z)}\to (X,p(e))^{(Z,z)}$ is onto.
\end{proof}

\begin{theorem}\label{freepathspacesthm}
If $p:E\to X$ has the continuous path-covering property, then so does the map $P(p):P(E)\to P(X)$ induced on free path spaces.
\end{theorem}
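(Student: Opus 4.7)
The plan is to unfold both sides of the desired homeomorphism via the exponential law and then reduce the resulting homotopy-lifting problem to the continuous path-covering property of $p$ itself by a standard concatenation trick that converts the varying-basepoint issue into a fixed-basepoint one.

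Fix $\beta\in P(E)$ and set $e_0=\beta(0)$, $x_0=p(e_0)$. Since $I$ is compact Hausdorff, the exponential law provides natural homeomorphisms between $P(P(E),\beta)$ and the space of continuous maps $G:I\times I\to E$ with $G(0,\cdot)=\beta$, and similarly for $X$; under these identifications, $P(P(p))$ becomes postcomposition by $p$. Thus the claim reduces to showing that every continuous $H:I\times I\to X$ with $H(0,\cdot)=p\circ\beta$ admits a unique continuous lift $G:I\times I\to E$ satisfying $G(0,\cdot)=\beta$ and $p\circ G=H$, and that the assignment $H\mapsto G$ is continuous in the compact-open topology. Uniqueness will be immediate: each slice $G(-,s)$ must be the lift (unique by the continuous path-covering property of $p$) of $H(-,s)$ starting at $\beta(s)$.

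The main obstacle to existence and continuity is that the basepoints $\beta(s)$ of the slices vary with $s$, so the lifting homeomorphism $P(p)^{-1}:P(X,x_0)\to P(E,e_0)$ cannot be applied to the slices directly. My plan is to prepend to each slice the initial segment $t\mapsto p(\beta(ts))$ of $p\circ\beta$, defining a single continuous map $\Phi:I\to P(X,x_0)$ by
\[
\Phi(s)(t)=\begin{cases} p(\beta(2ts)), & t\in[0,1/2],\\ H(2t-1,s), & t\in[1/2,1],\end{cases}
\]
which is well-defined at $t=1/2$ precisely because $H(0,s)=p(\beta(s))$. Applying $P(p)^{-1}$ produces a continuous $\wt{\Phi}:I\to P(E,e_0)$; unique path lifting applied to each half forces $\wt{\Phi}(s)(t)=\beta(2ts)$ on $[0,1/2]$ and forces $r\mapsto\wt{\Phi}(s)((r+1)/2)$ to be the unique lift of $H(-,s)$ starting at $\beta(s)$ on the second half. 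Setting $G(r,s)=\wt{\Phi}(s)((r+1)/2)$ then yields the desired lift, and continuity of the assignment $H\mapsto G$ follows by composing the evidently continuous steps $H\mapsto \Phi\mapsto \wt{\Phi}\mapsto G$, using only the pasting lemma, the continuity of $P(p)^{-1}$, and the exponential law for compact-open topologies on locally compact Hausdorff domains.
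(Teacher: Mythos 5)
Your argument is correct, but it takes a genuinely different route from the paper's. The paper deduces the theorem from Corollary \ref{continuoushomotopycorollary}: since $I^2\cong CI$, the cone-lifting lemma gives a homeomorphism $F_2:(E,e)^{(I^2,\bfz)}\to (X,p(e))^{(I^2,\bfz)}$, and one then checks (using unique path lifting on the bottom edge) that $F_2$ restricts to a homeomorphism between the subspaces of squares agreeing with $\alpha$, resp.\ $p\circ\alpha$, on $I\times\{0\}$; the exponential law identifies these subspaces with $P(P(E),\alpha)$ and $P(P(X),p\circ\alpha)$. You instead bypass the cone lemma entirely and build the inverse by hand: the concatenation $\Phi(s)=\bigl(t\mapsto p(\beta(ts))\bigr)\cdot H(-,s)$ converts the varying-basepoint family of slices into a single continuous map $I\to P(X,x_0)$, to which the fixed-basepoint homeomorphism $P(p)^{-1}$ applies; unique path lifting then identifies the first half of each lifted path with the corresponding initial segment of $\beta$ and the second half with the desired slice lift. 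This is a legitimate, self-contained proof using only Definition \ref{maindefs2} and unique path lifting (the continuity of $H\mapsto\Phi$ does require the routine observation that gluing along the closed cover $I\times[0,1/2]\cup I\times[1/2,1]$ is continuous at the level of mapping spaces, which your appeal to the pasting lemma and exponential law covers). What the paper's route buys is brevity and reuse: Lemma \ref{coneliftinglemma} is already needed elsewhere, and restricting an existing homeomorphism avoids the reparameterization bookkeeping. What your route buys is independence from the cone machinery and a more explicit description of the lifting map, essentially the standard "fibered over the starting point" trick made concrete.
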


\begin{proof}
Let $\alpha\in P(E)$ and $e=\alpha(0)$. By Corollary \ref{continuoushomotopycorollary}, the induced map $F_2:(E,e)^{(I^2,\bfz)}\to (X,p(e))^{(I^2,\bfz)}$ is a homeomorphism. Let $A=\{h\in (E,e)^{(I^2,\bfz)}\mid h(t,0)=\alpha(t)\}$ and similarly $B=\{h\in (X,p(e))^{(I^2,\bfz)}\mid h(t,0)=p\circ\alpha(t)\}$. Note that $F_2$ maps $A$ into $B$. Since $F_2$ is surjective, if $h\in B$, then there is a lift $\wt{h}\in (E,e)^{(I^2,\bfz)}$ such that $\wt{h}(t,0)$ is a path satisfying $\wt{h}(0,0)=e$ and $p\circ \wt{h}(t,0)=p\circ \alpha(t)$. Since $p$ has the unique path-lifting property, we have $\wt{h}(t,0)=\alpha(t)$ and thus $\wt{h}\in A$. It follows that $F_2$ maps $A$ homeomorphically onto $B$. Restricting the exponential law naturality diagram on the left gives the commutativity of the diagram on the right. 
\[\xymatrix{
P(P(E)) \ar[r]^-{P(P(p))} \ar[d]_-{\cong} & P(P(X)) \ar[d]^-{\cong} & & P(P(E),\alpha) \ar[r]^-{P(P(p))} \ar[d]_-{\cong} & P(P(X),p\circ \alpha) \ar[d]^-{\cong}\\
E^{I^2}
\ar[r]_-{p^{I^2}} & X^{I^2} & &  A \ar[r]_-{(F_2)|_{A}}^-{\cong} & B
}\]
It follows that the map $P(P(E),\alpha)\to P(P(X),p\circ\alpha)$ on based path spaces induced by $p$ is a homeomorphism.
\end{proof}

\section{Comparison to fibrations and Dydak's Problem}\label{sectioncomparision}

\begin{lemma}\label{hurfibrationlemma}
A Hurewicz fibration has totally path-disconnected fibers if and only if it has the continuous path-covering property.
\end{lemma}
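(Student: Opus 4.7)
The plan is to treat the two implications separately. For the direction ``continuous path-covering $\Rightarrow$ totally path-disconnected fibers,'' the Hurewicz hypothesis is not needed at all: if $P(p):P(E,e)\to P(X,p(e))$ is a homeomorphism for every $e\in E$, then it is in particular injective, so $p$ has the unique path-lifting property, and Proposition \ref{tpdfibersprop} yields that each fiber of $p$ is totally path disconnected.

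For the forward direction, assume $p$ is a Hurewicz fibration with totally path-disconnected fibers. Since every Hurewicz fibration is a Serre fibration, Lemma \ref{serrecharlemma} gives the unique path-lifting property, so $P(p):P(E,e)\to P(X,p(e))$ is injective for each $e$. Applying the homotopy lifting property to a one-point space, every path in $X$ based at $p(e)$ admits a lift to a path in $E$ based at $e$, which gives surjectivity; thus $P(p)$ is a continuous bijection for every basepoint $e$.

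To produce a continuous inverse I would apply the homotopy lifting property of $p$ directly to the space $Z=E\times_X P(X)=\{(e,\alpha)\in E\times P(X):p(e)=\alpha(0)\}$, taking $f:Z\to E$ to be the first projection and $g:Z\times I\to X$ the evaluation $g((e,\alpha),t)=\alpha(t)$. The compatibility $p\circ f((e,\alpha))=\alpha(0)=g((e,\alpha),0)$ holds, so the HLP produces a lift $\wt{g}:Z\times I\to E$ with $\wt{g}((e,\alpha),0)=e$ and $p\circ\wt{g}=g$. Because $I$ is locally compact Hausdorff, the exponential adjoint $\lambda:Z\to P(E)$ given by $\lambda(e,\alpha)(t)=\wt{g}((e,\alpha),t)$ is continuous; restricting $\lambda$ to the slice $\{e\}\times P(X,p(e))$ yields a continuous map $P(X,p(e))\to P(E,e)$ that, by uniqueness of path lifts, must coincide with $P(p)^{-1}$. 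Hence $P(p)$ is a homeomorphism, so $p$ has the continuous path-covering property. I do not anticipate any serious obstacle here; the only mildly technical ingredient is the standard adjunction between maps $Z\times I\to E$ and maps $Z\to E^I$, which is valid because $I$ is locally compact Hausdorff.
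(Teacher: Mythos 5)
Your proof is correct, and the interesting half (Hurewicz fibration with totally path-disconnected fibers $\Rightarrow$ continuous path-covering) follows a genuinely different route from the paper's. The paper reduces continuity of the lifting function to the statement that based maps from directed arc-fans $F(J)=CK$ lift, and then obtains such lifts by applying the homotopy lifting property to the space $K=J\cup\{\infty\}$ and descending along the quotient $K\times I\to F(J)$; this is tailored to the net-theoretic characterization of the continuous path-covering property in Remark \ref{netstofansremark}, which the paper reuses elsewhere. You instead apply the homotopy lifting property once, to the pullback $Z=E\times_{X}P(X)$, to manufacture the classical Hurewicz lifting function $\lambda:Z\to P(E)$, and then observe that its slice over a fixed $e$ is forced by unique path-lifting to be $P(p)^{-1}$. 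Your version is the more standard argument and is arguably more efficient: it produces the continuous inverse in one stroke and in fact shows the lifted path depends continuously on the pair $(e,\alpha)$ jointly, not merely on $\alpha$ for fixed $e$. Two small points to make explicit if you write this up: continuity of $g((e,\alpha),t)=\alpha(t)$ uses continuity of the evaluation $P(X)\times I\to X$, which holds because $I$ is locally compact Hausdorff; and the condition $\wt{g}((e,\alpha),0)=e$ is the standard normalization of the homotopy lifting property (the lift extends $f$), which the paper's Definition of the HLP omits but clearly intends and uses in its own proof.
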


\begin{proof}
The ``if" direction is a classical result (recall Lemma \ref{serrecharlemma}). For the ``only if" direction, suppose $p:E\to X$ is a Hurewicz fibration with totally path-disconnected fibers, i.e. with the unique path-lifting property. Since $p$ has the homotopy lifting property with respect to a point, $p$ has the path-covering property. Therefore, it suffices to verify the continuity of path lifting. According to Remark \ref{netstofansremark}, $p$ will have the continuous path-lifting property if we can show that maps from directed arc-fans can always be lifted (rel. basepoint). Suppose $J$ is an infinite directed set and $K=J\cup \{\infty\}$ as before. Let $e\in E$ and $f:(F(J),v_0)\to (X,f(e))$ be a map on the directed arc-fan. Let $q:K\times I\to F(J)$ be the quotient map, $g=f\circ q$, and $\wt{g}_0:K\times\{0\}\to E$ be the constant map at $e$.
\[\xymatrix{
K\times\{0\} \ar[rr]^-{\wt{g}_0} \ar[d] && E \ar[d]^-{p}\\
K\times I \ar@{-->}[urr]^-{\wt{g}} \ar@/_1.5pc/[rr]_-{g} \ar[r]_-{q} & F(J)  \ar[r]_-{f} \ar@{-->}[ur]_-{\wt{f}} & X
}\]
Since a Hurewicz fibration $p:E\to X$ has the homotopy lifting property with respect to $K$, there is a map $\wt{g}:K\times I\to X$ making the above diagram commute. The commutativity of the upper left triangle in the diagram implies that $\wt{g}$ induces a map $\wt{f}:(F(J),v_0)\to (E,e)$ on the quotient such that $p\circ \wt{f}=f$.
\end{proof}

Recall the statement of Theorem \ref{fibrationtheorem} from the introduction.

\begin{proof}[Proof of Theorem \ref{fibrationtheorem}]
(1) $\Rightarrow$ (2) follows from Lemma \ref{hurfibrationlemma}.

For (2) $\Rightarrow$ (3), suppose $p:E\to X$ has the continuous path-covering property. By Proposition \ref{tpdfibersprop}, it suffices to show that $p$ has the homotopy lifting property with respect to $I^n$ for $n\geq 1$. Note that $p:E\to X$ has the homotopy lifting property with respect to a locally compact Hausdorff space $Z$ if and only if for every map $f:Z\to E$, the induced map $P(p^Z):P(E^Z,f)\to P(X^Z,p\circ f)$ is surjective. By inductively applying Theorem \ref{freepathspacesthm} with the exponential homeomorphism $(W^{I^n})^{I}\cong W^{I^{n+1}}$, we have that for every $n\geq 1$ and map $f:I^n\to E$, the map $P(p^{I^n}):P(E^{I^n},f)\to P(X^{I^n},p\circ f)$ is a homeomorphism. Therefore, $p$ has the homotopy lifting property with respect to all cubes $I^n$, $n\geq 0$ and is a Serre fibration.

(3) $\Rightarrow$ (4) follows from Lemma \ref{serrecharlemma}.
\end{proof}

We briefly mention a relevant consequence that also appears in \cite{Brsemi}. Since every covering projection is a Hurewicz fibration with discrete fibers \cite[Theorem 2.2.3]{Spanier66}, we have the following.

\begin{corollary}\label{coveringmap}\cite{Brsemi} Every covering projection has the continuous path-covering property.
\end{corollary}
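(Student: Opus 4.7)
The plan is to reduce the statement immediately to Lemma \ref{hurfibrationlemma}. The preceding sentence already reminds us that every covering projection $p\colon E\to X$ is a Hurewicz fibration, citing \cite[Theorem 2.2.3]{Spanier66}, and by the definition of a covering projection each fiber $p^{-1}(x)$ carries the discrete topology. Since a discrete space is in particular totally path-disconnected (any path into it is constant), the map $p$ is a Hurewicz fibration with totally path-disconnected fibers.

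Given this, I would simply apply Lemma \ref{hurfibrationlemma}, whose ``if and only if'' statement says precisely that among Hurewicz fibrations, having totally path-disconnected fibers is equivalent to having the continuous path-covering property. This gives the conclusion with no additional work and no new lifting arguments needed, since all of the continuous-path-lifting content has already been absorbed into Lemma \ref{hurfibrationlemma}.

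There is no real obstacle here, as this corollary is entirely a packaging step. The only thing worth flagging is that one should not be tempted to prove it directly from the local-triviality of a covering projection (which would require redoing the arc-fan lifting argument from Lemma \ref{hurfibrationlemma}); the intended and cleanest route is the two-line chain covering projection $\Rightarrow$ Hurewicz fibration with discrete (hence totally path-disconnected) fibers $\Rightarrow$ continuous path-covering property via Lemma \ref{hurfibrationlemma}.
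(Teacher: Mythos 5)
Your proposal is correct and is precisely the paper's argument: the corollary is derived from the fact that every covering projection is a Hurewicz fibration with discrete (hence totally path-disconnected) fibers \cite[Theorem 2.2.3]{Spanier66}, combined with Lemma \ref{hurfibrationlemma} (equivalently, implication (1) $\Rightarrow$ (2) of Theorem \ref{fibrationtheorem}).
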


The previous corollary allows us to answer the question: when do we know that a map with the continuous path-covering property is, in fact, a genuine covering projection? Our answer provides a generalization of the classical result \cite[Theorem 2.5.10]{Spanier66}.
\begin{corollary}\label{coveringcorollary1}
Suppose $X$ is locally path-connected and semilocally simply connected and $p:E\to X$ is a map with the continuous path-covering property. If $ev_1:P(E,e_0)\to E$ is quotient (for instance, if $E$ is locally path-connected), then $p$ is a covering projection.
\end{corollary}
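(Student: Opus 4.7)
The plan is to realize the abstract subgroup $H:=p_{\#}(\pi_1(E,e_0))\leq \pi_1(X,x_0)$ by a \emph{classical} cover and then identify $E$ with that cover using the rigidity provided by Corollary \ref{cpcequivalencetheorem}. The hypotheses on $X$ are precisely what classical covering space theory requires.

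First I would set $H=p_{\#}(\pi_1(E,e_0))$. Because $X$ is path-connected, locally path-connected, and semilocally simply connected, the classical existence theorem (e.g.\ \cite[Theorem 2.5.13]{Spanier66}) yields a covering projection $q:(\tX_H,\tilde{x}_0)\to (X,x_0)$ whose induced subgroup is exactly $H$, i.e.\ $q_{\#}(\pi_1(\tX_H,\tilde{x}_0))=H$. By Corollary \ref{coveringmap}, $q$ has the continuous path-covering property. Moreover, covering spaces of locally path-connected spaces are locally path-connected, so $\tX_H$ is locally path-connected; consequently $ev_1:P(\tX_H,\tilde{x}_0)\to \tX_H$ is an open surjection, and in particular a quotient map.

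Next I would verify the hypotheses of Corollary \ref{cpcequivalencetheorem} for the pair $(p,q)$: both maps have the continuous path-covering property, both endpoint evaluation maps $ev_1$ are quotient (by hypothesis on $E$ and by the paragraph above for $\tX_H$), and both induce the same subgroup $H$ of $\pi_1(X,x_0)$. The corollary then supplies a unique based homeomorphism $h:(E,e_0)\to (\tX_H,\tilde{x}_0)$ satisfying $q\circ h=p$. Since $h$ is a homeomorphism and $q$ is a covering projection, the composition $p=q\circ h$ is a covering projection, completing the proof.

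There is essentially no genuine obstacle here beyond bookkeeping: the whole argument is a two-step reduction, and the only mild verification is that the $ev_1$-quotient hypothesis transfers from $E$ to the classical cover $\tX_H$, which is automatic once one observes that $\tX_H$ is locally path-connected. The parenthetical remark ``for instance, if $E$ is locally path-connected'' is justified by the standard fact that on any locally path-connected space the map $ev_1$ is an open surjection and hence a quotient map.
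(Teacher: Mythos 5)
Your proposal is correct and follows essentially the same route as the paper's proof: realize $H=p_{\#}(\pi_1(E,e_0))$ by a classical covering projection $q$ over $X$, note that $q$ has the continuous path-covering property by Corollary \ref{coveringmap}, and invoke Corollary \ref{cpcequivalencetheorem} to obtain the identifying homeomorphism. Your extra remark checking that $ev_1$ is quotient for the classical cover (via local path-connectedness of the covering space) is a detail the paper leaves implicit, but the argument is the same.
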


\begin{proof}
The hypotheses on $X$ imply that there exists a covering projection $q:E'\to X$ and $e_0'\in E'$ such that $q_{\#}(\pi_1(E',e_0'))=p_{\#}(\pi_1(E,e_0))$. By Corollary \ref{coveringmap}, $q$ has the continuous path-covering property. Since $ev_1:P(E,e_0)\to E$ is quotient, Corollary \ref{cpcequivalencetheorem} applies to give a homeomorphism $f:E\to E'$ such that $q\circ f=p$. It follows that $p$ is a covering projection.
\end{proof}

\begin{example}\label{counterexample}
By considering two one-dimensional planar sets, we construct a counterexample to the converse of (1) $\Rightarrow$ (2) in Theorem \ref{fibrationtheorem}. Define \[A=\{(x,-\sqrt{x-x^2})\in\bbr^2\mid x\in[0,1]\}\cup \bigcup_{n\in\bbn}\{(t,t/n)\in\bbr^2\mid 0\leq t\leq 2\}.\] Let $X_1=A\cup \left\{\left(x,\frac{1-x}{2}\right)\mid 1\leq x\leq 2\right\}$ and $X_2=A\cup [1,2]\times \{0\}$. Define $p:X_1\to X_2$ to be the identity on $A$ and $p\left(x,\frac{1-x}{2}\right)=(x,0)$, $1\leq x\leq 2$ on the additional line segment (See Figure \ref{counterexamplefig}). Notice that $p$ is a continuous bijection with the continuous path-covering property. However, $f$ is not a fibration since it does not have the homotopy lifting property with respect to the convergent sequence space $S=\{0\}\cup\{1/n\mid n\in\bbn\}$. In particular, we have $f:S\to X_1$ given by $f(0)=(1,0)$ and $f(1/n)=(1,1/n)$. If $g:S\times I\to X_2$ is defined by $g(0,t)=(0,t+1)$ and $g(1/n,t)=\left(t+1,\frac{t+1}{n}\right)$, then we have $p\circ f(s)=g(s,0)$; however, there is no continuous lift $\wt{g}:S\times I\to X_1$ such that $p\circ \wt{g}=g$.
\end{example}

\begin{figure}[H]\label{counterexamplefig}
\centering \includegraphics[height=1.6in]{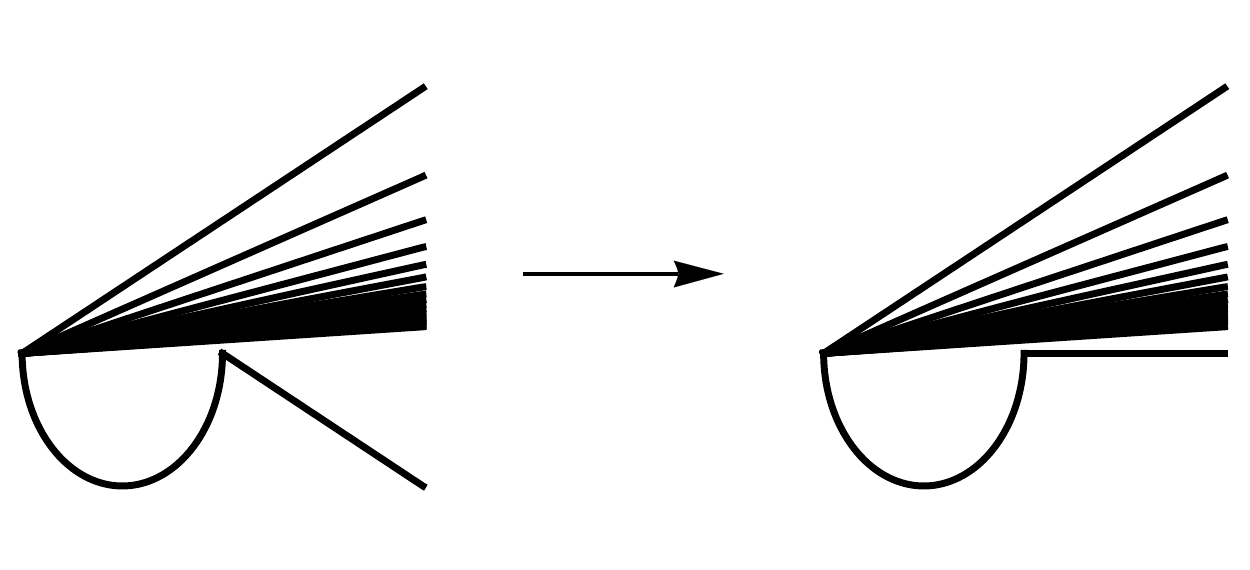}
\caption{The map $p:X_1\to X_2$ has the continuous path-covering property but is not a Hurewicz fibration.}
\end{figure}

\begin{example}\cite{FZ07}\label{heexample}
Consider the Hawaiian earring space \[\bbh=\bigcup_{n\in\bbn}\left\{(x,y)\in\bbr^2\mid \left(x-\frac{1}{n}\right)^2+y^2=\frac{1}{n^2}\right\}\] where $\ell_n:S^1\to \bbh$ denotes the standard counterclockwise loop around the n-th circle based at $b_0=(0,0)$. Although $\bbh$ does not admit a simply connected covering space (since it is not semilocally simply connected), it does admit a simply connected space $\wt{\bbh}$, which is a generalized universal covering space in the sense of \cite{FZ07}. The generalized covering map $p:\wt{\bbh}\to \bbh$ is characterized by its lifting property: given $\wt{x}\in\wt{\bbh}$ and a map $f:(Y,y)\to (\bbh,p(\wt{x}))$ from a path-connected, locally path-connected space $Y$, there exists a unique lift $\wt{f}:(Y,y)\to (\wt{\bbh},\wt{x})$ (satisfying $p\circ \wt{f}=f$) if and only if $f_{\#}(\pi_1(Y,y))=1$. It follows directly from this lifting criterion that $p$ is a Serre fibration with unique path-lifting. Moreover, as observed in \cite[Example 4.15]{FZ07}, distinct fibers of $p$ may not be homeomorphic. Hence, $p$ is not a Hurewicz fibration. We observe that, in fact, $p$ does not have the continuous path-covering property and, therefore, provides a counterexample to the converse of (2) $\Rightarrow$ (3) in Theorem \ref{fibrationtheorem}. 

As noted in \cite{FZ07}, $\wt{\bbh}=P(\bbh,b_0)/\mathord{\sim}$ may be constructed as the set of path-homotopy classes $[\alpha]$ of paths $\alpha\in P(\bbh,b_0)$. A basic open neighborhood of $[\alpha]$ is of the form $B([\alpha],U)=\{[\alpha\cdot\delta]\mid \delta(I)\subseteq U\}$ where $U$ is an open neighborhood of $\alpha(1)$. The map $p:\wt{\bbh}\to \bbh$ is endpoint projection $p([\alpha])=\alpha(1)$. In particular, if we take the class $[c_{b_0}]$ of the constant path as our basepoint in $\wt{\bbh}$, then the unique lift $\wt{\alpha}:(I,0)\to (\wt{\bbh},[c_{b_0}])$ ends at $[\alpha]$.

Notice that the sequence $\alpha_n=\ell_n\cdot\ell_1$ converges to $\ell_1$ in $P(\bbh,b_0)$. If $p$ had the continuous path-covering property, then the sequence of lifts $\wt{\alpha}_n:(I,0)\to (\wt{\bbh},b_0)$ would converge to the lift $\wt{\ell}_1$. In particular, the sequence of endpoints $\wt{\alpha}_n(1)=[\alpha_n]$ would converge to $\wt{\ell}_1(1)=[\ell_1]$ in $\wt{\bbh}$. However, if $U$ is any neighborhood of $b_0$ not containing the first circle of $\bbh$, then the basic open neighborhood $B([\ell_1],U)$ of $[\ell_1]$ only contains homotopy classes of paths that begin with $[\ell_1]$. Hence $[\alpha_n]\notin B([\ell_1],U)$ for any $n\in\bbn$; a contradiction. We conclude that $p$ does not have the continuous path-covering property.
\end{example}

Let $D^2$ denote the closed unit disk with basepoint $d_0=(1,0)$.

\begin{problem}[Dydak's Unique Lifting Problem]\cite[Problem 2.3]{Dydak}\label{dydaksproblem}
If $E$ is a connected, locally path-connected space and $p:E\to D^2$ is a map with the path-covering property, must $p$ be a homeomorphism?
\end{problem}

Dydak's Unique Lifting Problem is curiously difficult. We identify an interesting connection between this problem and Theorem \ref{fibrationtheorem}. To do so, we recall a functorial construction that ``locally path-connectifies" spaces. Given a space $X$ the \textit{locally path-connected coreflection} of $X$ is the space $lpc(X)$ with the same underlying set as $X$ but whose topology is generated by the basis consisting of all path components of the open sets in $X$. This new topology on $X$ is generally finer than the original topology, i.e. the identity function $id:lpc(X)\to X$ is continuous. Moreover $lpc(X)$ is characterized by its universal property: if $f:Y\to X$ is a map from a locally path-connected space $Y$, then $f:Y\to lpc(X)$ is continuous with respect to the topology of $lpc(X)$. An immediate consequence of this fact is the equality $X^Y=lpc(X)^Y$ of mapping sets if $Y$ is locally path connected. In particular, $X$ and $lpc(X)$ share the same set of paths and path-homotopies. Hence, $id:lpc(X)\to X$ is a bijective weak homotopy equivalence, which sometimes is even a Hurewicz fibration \cite[Example 2.4.8]{Spanier66}.

\begin{definition}
We say a map $p:E\to X$ has the \textit{disk-covering property} if for every $e\in E$, the induced function $F:(E,e)^{(D^2,d_0)}\to (X,p(e))^{(D^2,d_0)}$ is bijective.
\end{definition}

\begin{lemma}\label{diskcoveringlemma}
For any map $p:E\to X$, the following are equivalent.
\begin{enumerate}
\item $p$ has the disk-covering property,
\item $p$ has the path-covering property and the homotopy lifting property with respect to $I$.
\item $p:E\to X$ is a Serre fibration with totally path-disconnected fibers,
\item $p$ has totally path-disconnected fibers and the homotopy lifting property with respect to all first countable, locally path-connected, simply connected spaces.
\end{enumerate}
\end{lemma}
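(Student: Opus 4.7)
The plan is to establish the cycle $(1)\Rightarrow(2)\Rightarrow(4)\Rightarrow(3)\Rightarrow(1)$, with the substantive content concentrated in $(2)\Rightarrow(4)$. Three of the four links are bookkeeping. $(4)\Rightarrow(3)$ is immediate, since each cube $I^n$ is first countable, locally path-connected, and simply connected, while tpd fibers follow from Proposition~\ref{tpdfibersprop}. For $(3)\Rightarrow(1)$, I identify $D^2 \cong I^2$ (sending $d_0$ to $\bfz$) and, starting from $f:(I^2,\bfz)\to(X,p(e))$, first use the Serre HLP w.r.t.\ a point plus unique path lifting (Lemma~\ref{serrecharlemma}) to lift $f|_{I\times\{0\}}$ to a path in $E$ starting at $e$, then use the Serre HLP w.r.t.\ $I$ to extend to $\tilde f:I^2\to E$; uniqueness follows from two applications of unique path lifting. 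The direction $(1)\Rightarrow(2)$ is similarly direct: a path $\alpha:I\to X$ extends to $\alpha\circ r:D^2\to X$ via a retraction $r:D^2\to I$ with $r(d_0)=0$, and its disk-cover lift restricts to a path-lift of $\alpha$; HLP w.r.t.\ $I$ follows by identifying $I^2\cong(D^2,d_0)$ (sending $(0,0)\mapsto d_0$) and applying disk-covering, with unique path lifting verifying that the resulting disk-lift restricts correctly on $I\times\{0\}$.

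The core work is $(2)\Rightarrow(4)$. Given $f:Z\to E$ and $g:Z\times I\to X$ with $pf=g|_{Z\times\{0\}}$ and $Z$ first countable, locally path-connected, and simply connected, I define the candidate lift by $\tilde g(z,t)=\tilde g_z(t)$, where $\tilde g_z\in P(E,f(z))$ is the unique lift of the path $g(z,\cdot)$ starting at $f(z)$ (existing by path-covering). The identities $\tilde g|_{Z\times\{0\}}=f$ and $p\tilde g=g$ are built into the definition, so the entire content is continuity of $\tilde g$. Since $Z\times I$ is first countable, I will verify continuity sequentially at each point $(z_0,t_0)$. Given a convergent sequence $(z_n,t_n)\to(z_0,t_0)$, first countability and local path connectedness furnish a decreasing countable basis $\{U_k\}$ of path-connected open neighborhoods of $z_0$; after passing to a subsequence so that $z_n\in U_n$, I choose paths $\tau_n:I\to U_n$ from $z_{n+1}$ to $z_n$ and splice them into a single continuous path $\Sigma:I\to Z$ with $\Sigma(0)=z_0$, $\Sigma(1/n)=z_n$, and $\Sigma|_{[1/(n+1),1/n]}$ a reparameterization of $\tau_n$. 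Continuity of $\Sigma$ at $0$ is guaranteed by the inclusion $\Sigma([0,1/N])\subseteq\{z_0\}\cup U_N$ for every $N$. Then HLP w.r.t.\ $I$, applied to $g\circ(\Sigma\times\mathrm{id}):I\times I\to X$ with initial lift $f\circ\Sigma:I\to E$, produces a continuous $\tilde H:I\times I\to E$ with $p\tilde H=g\circ(\Sigma\times\mathrm{id})$ and $\tilde H(s,0)=f(\Sigma(s))$. Unique path lifting forces $\tilde H(s,t)=\tilde g(\Sigma(s),t)$, so $\tilde H(1/n,t_n)=\tilde g(z_n,t_n)$ and $\tilde H(0,t_0)=\tilde g(z_0,t_0)$; joint continuity of $\tilde H$ on $I^2$ then delivers $\tilde g(z_n,t_n)\to\tilde g(z_0,t_0)$, as required.

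The main obstacle is precisely this joint-continuity step: HLP w.r.t.\ $I$ on its own only controls the lift along one one-parameter family in $Z$ at a time, whereas continuity at $(z_0,t_0)$ demands simultaneous control along every sequence in $Z\times I$ approaching the point. The splicing construction $\Sigma$ is the decisive trick, packaging an entire convergent sequence into a single continuous path in $Z$ and reducing the sequential continuity of $\tilde g$ to the continuity of one lift produced by a single invocation of HLP w.r.t.\ $I$. I note that simple connectedness of $Z$ plays no role in this argument; the same proof in fact establishes HLP with respect to every first countable, locally path-connected space.
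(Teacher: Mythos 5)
Your proof is correct, and its skeleton---routine implications plus a sequential-continuity argument for the one substantive direction, powered by threading a path through a convergent sequence using first countability and local path-connectedness---is the same as the paper's. The interesting differences are in how the lift is defined and identified in (2)~$\Rightarrow$~(4). The paper fixes a basepoint $(z_0,0)$, defines $\widetilde{g}(z,t)$ as the endpoint of the lift of $g\circ\gamma$ for a path $\gamma$ from $(z_0,0)$ to $(z,t)$, and needs simple connectedness of $Z\times I$ (together with lifting of path-homotopies) to make this well defined; continuity is then obtained by lifting the single path $g\circ\gamma$ through the points of the sequence. You instead define $\widetilde{g}$ fiberwise by lifting each vertical path $g(z,\cdot)$ from $f(z)$---well defined with no hypothesis on $\pi_1(Z)$---and for continuity you lift the whole square $g\circ(\Sigma\times\mathrm{id})$ via the homotopy lifting property with respect to $I$, matching it to $\widetilde{g}$ by unique path lifting. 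This buys the genuinely stronger conclusion you point out: the homotopy lifting property with respect to every first countable, locally path-connected space, simply connected or not; the paper's hypothesis in (4) is needed only because its construction of $\widetilde{g}$ must kill monodromy. Your (1)~$\Rightarrow$~(2) also differs cosmetically (corner basepoint plus unique path lifting on the bottom edge, versus the paper's use of the homotopy extension property for $(I^2,I\times\{0\})$ and the quotient $I^2/(I\times\{0\})\cong D^2$); both work. The only loose end is standard: having passed to a subsequence to arrange $z_{n}\in U_{n}$, you should remark that showing every subsequence of $\{\widetilde{g}(z_n,t_n)\}$ has a further subsequence converging to $\widetilde{g}(z_0,t_0)$ suffices for convergence of the original sequence in an arbitrary codomain.
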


\begin{proof}
The directions (4) $\Rightarrow$ (3) $\Rightarrow$ (2) $\Rightarrow$ (1) are clear. Additionally, (1) $\Rightarrow$ (2) follows directly from the fact that $(I^2,I\times \{0\})$ has the homotopy extension property and $I^2/I\times \{0\}\cong D^2$. To prove (2) $\Rightarrow$ (4), suppose $p:E\to X$ has the path-covering property and the homotopy lifting property with respect to $I$. The fibers of $p$ are totally path disconnected by Proposition \ref{tpdfibersprop}. Note that Lemma \ref{basicliftingpropertieslemma} applies to $p$. Let $Z$ be a first countable, path-connected, locally path-connected space and $f:Z\to E$ and $g:Z\times I\to X$ be maps such that $p\circ f(z)=g(z,0)$ for all $z\in Z$. Fix $z_0\in Z$. Set $e_0=f(z_0)$ and $x_0=p(e_0)$. Since $p$ has unique lifting of all paths and path-homotopies and since $Z\times I$ is simply connected, there is a unique function $\wt{g}:(Z\times I,(z_0,0))\to (E,e_0)$ such that $p\circ \wt{g}=g$ defined so that if $(z,t)\in Z\times I $ and $\gamma$ is a path from $(z_0,0)$ to $(z,t)$, then $\wt{g}(z,t)$ is the endpoint of the lift $\wt{g\circ \gamma}\in P(E,e_0)$. It suffices to check that $\wt{g}$ is continuous. Consider a convergent sequence $\{(z_m,t_m)\}\to (z,t)$ in $Z\times I$. Since $Z\times I$ is first countable and locally path-connected, there is a path $\gamma\in P(Z\times I,(z_0,0))$ such that $\gamma(1/2)=(z,t)$ and if $s_m=\frac{1}{2}+\frac{1}{m+1}$, then $\gamma(s_m)=(z_m,t_m)$. Consider the lift $\wt{g\circ f}:(I,0)\to (E,e_0)$. Applying the uniqueness of path lifting and the well-definedness of $\wt{g}$, we see that $\wt{g\circ \gamma}(1/2)=\wt{g}(z,t)$ and $\wt{g\circ \gamma}(s_m)=\wt{g}(z_m,t_m)$ for all $m\in\bbn$. The continuity of $\wt{g\circ\gamma}$ applied to $\{t_m\}\to 1/2$ gives $\wt{g}(z_m,t_m)\to \wt{g}(z,t)$. Thus $\wt{g}$ is continuous.
\end{proof}

\begin{theorem}\label{dydaktheorem}
Dydak's Unique Lifting Problem has a positive answer if and only if properties (3) and (4) in Theorem \ref{fibrationtheorem} are equivalent for all maps.
\end{theorem}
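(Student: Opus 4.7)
My plan is to reduce the equivalence $(3)\Leftrightarrow (4)$ in Theorem \ref{fibrationtheorem} to the equivalence ``path-covering $\Leftrightarrow$ disk-covering'' for all maps by invoking Lemma \ref{diskcoveringlemma}: given the path-covering property (4), the fibers of $p$ are automatically totally path-disconnected by Proposition \ref{tpdfibersprop}, and Lemma \ref{diskcoveringlemma} identifies condition (3) with the disk-covering property. With this reformulation in hand, one direction becomes an application of disk-covering to the identity map $\mathrm{id}_{D^2}$, while the other becomes a pullback--$lpc$ construction whose conclusion is exactly the statement of Dydak's Problem on an auxiliary space.

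For the ``if'' direction, I would assume that $(4)\Rightarrow (3)$ holds for all maps and take $p:E\to D^2$ with the path-covering property, where $E$ is connected and locally path-connected. The hypothesis upgrades $p$ to a Serre fibration with totally path-disconnected fibers, so Lemma \ref{basicliftingpropertieslemma} (applied with $\pi_1(D^2)=1$) makes $p$ a continuous bijection, and Lemma \ref{diskcoveringlemma} further gives $p$ the disk-covering property. Applying this to $\mathrm{id}_{D^2}:(D^2,d_0)\to (D^2,d_0)$ produces a continuous section $s:D^2\to E$ with $p\circ s=\mathrm{id}_{D^2}$; since $p$ is bijective, $s=p^{-1}$, and $p$ is a homeomorphism.

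For the ``only if'' direction, I would assume Dydak's Problem \ref{dydaksproblem} has a positive answer and take an arbitrary $p:E\to X$ with the path-covering property. By Lemma \ref{diskcoveringlemma}, it suffices to verify disk-covering, i.e.\ given $e\in E$ and $f:(D^2,d_0)\to (X,p(e))$, to construct a unique based lift $\tilde f:(D^2,d_0)\to (E,e)$. I would form the pullback $q:E'=D^2\times_X E\to D^2$, restrict to the path component $E'_0$ of $(d_0,e)$, and pass to the locally path-connected coreflection $lpc(E'_0)$. Path-covering of $p$ transfers to a path-covering property for the restriction $q_0:lpc(E'_0)\to D^2$, which is a continuous surjection from a connected, locally path-connected space (paths and path-components in $E'_0$ and $lpc(E'_0)$ coincide). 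Dydak's hypothesis then makes $q_0$ a homeomorphism, and composing $q_0^{-1}$ with the projection $E'_0\hookrightarrow D^2\times E\to E$ yields the desired $\tilde f$. The basepoint condition $\tilde f(d_0)=e$ is forced because $(d_0,e)$ is the unique preimage of $d_0$ under $q_0$, and uniqueness of $\tilde f$ follows from the unique path-lifting property of $p$ applied along paths in $D^2$ emanating from $d_0$.

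The main obstacle is this ``only if'' direction, where one must manufacture a map to which Dydak's Problem applies from an arbitrary $p$ whose total space $E$ need not be locally path-connected and whose base $X$ need not be $D^2$. The pullback cleanly adjusts the base to $D^2$ and the path component $E'_0$ cleanly inherits the path-covering property, but the pullback has no reason to be locally path-connected. The crucial point is that the $lpc$ functor imposes local path-connectedness without altering paths, path-components, or path-homotopies, so both the hypothesis of Dydak's Problem and the path-covering property of $q_0$ survive simultaneously on $lpc(E'_0)$.
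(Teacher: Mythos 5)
Your proposal is correct and follows essentially the same route as the paper: both directions hinge on Lemma \ref{diskcoveringlemma}, with the ``only if'' direction using exactly the paper's pullback-plus-$lpc$-coreflection construction to manufacture a map over $D^2$ to which Dydak's Problem applies. The only cosmetic difference is in the ``if'' direction, where the paper lifts a homeomorphism $f:(I^2,\bfz)\to(D^2,d_0)$ via the Serre fibration property and checks by a path-lifting argument that $\wt{f}\circ f^{-1}$ is a two-sided inverse, while you obtain the section of $p$ by applying the disk-covering property to $id_{D^2}$ and combine it with the continuous-bijection clause of Lemma \ref{basicliftingpropertieslemma}; these are equivalent in substance.
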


\begin{proof}
First, suppose (3) and (4) in Theorem \ref{fibrationtheorem} are equivalent for all maps. If $E$ is a path-connected, locally path-connected space and $p:E\to D^2$ is a map with the path-covering property, then, by assumption, $p$ is a Serre fibration. Recall $d_0=(1,0)\in D^2$ and set $e_0\in p^{-1}(d_0)$. Let $\alpha:(I,0)\to (D^2,d_0)$, $\alpha(t)=(\cos(\pi t),-\sin(\pi t))$ be the arc on the boundary of the lower semicircle and $f:(I^2,\bfz)\to (D^2,d_0)$ be a homeomorphism such that $f(t,0)=\alpha(t)$. Since $p$ is a Serre fibration there exists a map $\wt{f}:(I^2,\bfz)\to (E,e_0)$ satisfying $p\circ \wt{f}=f$. Clearly, we have $p\circ (\wt{f}\circ f^{-1})=id_{D^2}$. 
\[\xymatrix{
& E \ar[d]^-{p}\\
I^2 \ar[ur]^-{\wt{f}} \ar[r]_-{f}^-{\cong} & D^2
}\]
Let $e\in E$ and find a path $\wt{\beta}:I\to E$ from $e_0$ to $e$. Let $\beta=p\circ\wt{\beta}$. Now $\wt{f}\circ f^{-1}\circ\beta:(I,0)\to (E,e_0)$ is a path satisfying $p\circ \wt{f}\circ f^{-1}\circ\beta=\beta$ and thus $\wt{f}\circ f^{-1}\circ\beta=\wt{\beta}$. In particular, $\wt{f}\circ f^{-1}\circ p(e)=\wt{f}\circ f^{-1}\circ \beta(1)=\wt{\beta}(1)=e$. Thus $\wt{f}\circ f^{-1}\circ p=id_E$, proving that $p$ is a homeomorphism with inverse $\wt{f}\circ f^{-1}$.

Next, we suppose Dydak's Unique Lifting Problem has a positive answer and that $p:E\to X$ is a map with the path-covering property. By Lemma \ref{diskcoveringlemma}, it suffices to show that $p$ has the disk-covering property. Let $e\in E$ and consider the induced map $F:(E,e)^{(D^2,d_0)}\to (X,p(e))^{(D^2,d_0)}$, $F(g)=p\circ g$. Since $p$ has unique path-lifting it is clear that $F$ is injective. Let $f:(D^2,d_0)\to (X,p(e_0))$ be a map. Let $D^2\times_{X}E=\{(b,e)\mid f(b)=p(e)\}$ be the pullback of $f$ and $p$ topologized as a subspace of $D^2\times E$. We take $Y$ to be the locally path-connected coreflection of the path component of $D^2\times_{X}E$ containing $(d_0,e_0)$. Let $q:Y\to D^2$ be the resulting projection. Since $p$ has the disk-covering property, the universal property of the pullback and the locally path-connected coreflection make it clear that $q$ has the path-covering property. Hence, by assumption, $q$ is a homeomorphism. It follows that if $i:Y\to D^2\times_{X}E$ is the canonical continuous inclusion (though it may not be an embedding) and $r:D^2\times_{X}E\to E$ is the projection, then $\wt{f}=r\circ i\circ q^{-1}:(D^2,d_0)\to (E,e_0)$ is a map such that $p\circ \wt{f}=f$. We conclude that $F$ is onto. Thus $p$ has the disk-covering property.
\[\xymatrix{
Y \ar[r]^-{i} \ar@<.5ex>[d]^{q} & D^2\times_{X}E \ar[r]^-{r} \ar[d] & E \ar[d]^-{p}\\
D^2 \ar@<.5ex>[u]^{q^{-1}} \ar@{=}[r] & D^2 \ar@{-->}[ur]^-{\wt{f}} \ar[r]_{f} & X
}\]
\end{proof}

If Dydak's problem has an affirmative answer, the following Corollary becomes a striking consequence; the conclusion implies that, within standard scenarios, only the unique lifting of paths is required to guarantee the strongest kind of structure, namely, that of a genuine covering projection. For instance, it would imply that any map $p:E\to X$ of connected manifolds with the path-covering property must be a covering projection.

\begin{corollary}\label{corollarytodydak}
Suppose Dydak's Unique Path-Lifting Problem has an affirmative answer. If $p:E\to X$ has the path-covering property where $X$ is first countable, locally path-connected, semilocally simply connected, and $ev_1:P(E,e_0)\to E$ is quotient, then $p$ is a covering projection.
\end{corollary}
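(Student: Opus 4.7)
The plan is to build the classical covering matching $p$'s $\pi_1$-image, lift $p$ to a continuous bijection $h\colon E \to E'$ over $X$, and then upgrade $h$ to a homeomorphism using first countability of $X$. By Dydak's positive answer and Theorem~\ref{dydaktheorem}, $p$ is a Serre fibration with totally path-disconnected fibers, so Lemma~\ref{basicliftingpropertieslemma} applies to $p$. Because $X$ is locally path-connected and semilocally simply connected, classical covering space theory produces a covering projection $q\colon (E', e_0') \to (X, x_0)$ with $q_{\#}(\pi_1(E', e_0')) = H := p_{\#}(\pi_1(E, e_0))$, and by Corollary~\ref{coveringmap}, $q$ has the continuous path-covering property.

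I then apply Lemma~\ref{liftinglemma} to lift the map $p\colon (E,e_0)\to (X,x_0)$ through $q$: the hypotheses that $ev_1\colon P(E,e_0)\to E$ is quotient and $p_{\#}(\pi_1(E,e_0))=q_{\#}(\pi_1(E',e_0'))$ deliver a continuous based map $h\colon (E,e_0)\to (E',e_0')$ with $q\circ h=p$. For surjectivity of $h$, lift an arbitrary path $\gamma'$ in $E'$ from $e_0'$ by $p$ after projecting it to $q\circ\gamma'$ in $X$, and match the result with $\gamma'$ via unique path-lifting for the covering $q$. For injectivity, if $h(e_1)=h(e_2)$, pick paths $\gamma_i$ in $E$ from $e_0$ to $e_i$; the loop $(h\circ\gamma_1)\cdot(h\circ\gamma_2)^{-}$ in $E'$ at $e_0'$ has $q$-image in $H$, so by Lemma~\ref{basicliftingpropertieslemma}(2) its $p$-lift at $e_0$ must be a loop, forcing $e_1=e_2$.

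The heart of the argument is showing $h^{-1}\colon E'\to E$ is continuous, and this is where first countability enters. Since $q$ is a local homeomorphism and $X$ is first countable and locally path-connected, so is $E'$, and hence sequential continuity of $h^{-1}$ suffices. Given a sequence $e_n'\to e'$ in $E'$, I construct a continuous path $\gamma'\colon I\to E'$ with $\gamma'(0)=e'$ and $\gamma'(1/n)=e_n'$ for all $n$ by fixing a nested basis $W_1\supseteq W_2\supseteq\cdots$ of path-connected open neighborhoods of $e'$, choosing indices $k(n)\to\infty$ with $e_n'\in W_{k(n)}$, and concatenating, on each subinterval $[1/(n+1),1/n]$, a path from $e_{n+1}'$ to $e_n'$ inside $W_{\min(k(n),k(n+1))}$; continuity at $0$ follows because the $W_k$ form a neighborhood basis at $e'$. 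Lifting $q\circ\gamma'$ along $p$ starting at $h^{-1}(e')$ yields a continuous $\widetilde\gamma\in P(E)$; by unique path-lifting for $q$ applied to $h\circ\widetilde\gamma$, we have $h\circ\widetilde\gamma=\gamma'$, so $\widetilde\gamma(1/n)=h^{-1}(e_n')$, and continuity of $\widetilde\gamma$ at $0$ gives $h^{-1}(e_n')\to h^{-1}(e')$. Thus $h$ is a homeomorphism and $p=q\circ h$ is a covering projection. The main technical obstacle is the piecewise construction of $\gamma'$, where first countability of $X$ (inherited by $E'$) is essential, while Dydak's positive answer is precisely what licenses applying Lemma~\ref{basicliftingpropertieslemma} to $p$ in the injectivity step.
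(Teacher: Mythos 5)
Your proposal is correct and follows essentially the same route as the paper: invoke Theorem \ref{dydaktheorem} to make $p$ a Serre fibration so that Lemma \ref{basicliftingpropertieslemma} applies, produce the classical covering $q$ corresponding to $H$, lift $p$ through $q$ via Lemma \ref{liftinglemma} to a bijection over $X$, and prove continuity of the inverse by threading a convergent sequence in the first countable, locally path-connected space $E'$ through a single path and lifting that path. The only differences are cosmetic: you spell out the bijectivity of $h$ (which the paper asserts in one line) and you identify the lifted path via unique path-lifting for $q$ rather than lifting directly through $\wt{p}$, which amounts to the same argument.
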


\begin{proof}
Supposing the hypotheses, $p$ is a Serre fibration with totally path-disconnected fibers by Theorem \ref{dydaktheorem}. Let $H=p_{\#}(\pi_1(E,e_0))$. Given the conditions on $X$, there exists a covering projection $q:(E',e_0')\to (X,x_0)$ such that $q_{\#}(\pi_1(E',e_0'))=H$. Since $q$ has the continuous path-covering property, by Lemma \ref{liftinglemma}, there exists a unique continuous lift $\wt{p}:(E,e_0)\to (E',e_0')$ such that $q\circ \wt{p}=p$. Since both $p$ and $q$ have path-covering property and correspond to $H$, $\wt{p}$ is a bijection with the path-covering property.
\[\xymatrix{
P(E,e_0) \ar[d]_-{ev_1} \ar[r]^-{P(\wt{p})} & P(E',e_0') \ar[d]^-{ev_1}\\
E \ar[r]_-{\wt{p}} & E'
}\]
Since $X$ is first countable, so is the covering space $E'$. Let $\{e_m'\}\to e'$ be a convergent sequence in $E'$ and $\wt{p}(e_m)=e_m'$ and $\wt{p}(e)=e'$. Since $E'$ is first countable and locally path-connected, there is a path $\wt{\alpha}\in P(E',e_0')$ such that $\wt{\alpha}(1/2)=e'$ and $\wt{\alpha}(t_m)=e_m'$ where $t_m=\frac{1}{2}+\frac{1}{m+1}$, $m\in\bbn$. Since $\wt{p}$ has the path-covering property, there is a unique path $\wt{\beta}\in P(E,e_0)$ such that $\wt{p}\circ\wt{\beta}=\wt{\alpha}$. Since $\wt{\beta}(t_m)=e_m$ and $\wt{\beta}(1/2)=e$, the continuity of $\wt{\beta}$ gives $\{e_m\}\to e$. This proves that the inverse of $\wt{p}$ is continuous. Since $\wt{p}$ is a homeomorphism and $q$ is a covering map, $p$ is a covering map.
\end{proof}

\section{Inducing embeddings on topologized fundamental groups}\label{sectioninducedhomo}

Recall from the introduction the basic properties of quotient topology on the homotopy groups. A key feature of this topology is that a convergent net $\{\alpha_j\}_{j\in J}\to \alpha$ of based loops in $\Omega(X,x_0)$ gives rise to a convergent net of homotopy classes $\{[\alpha_j]\}_{j\in J}\to [\alpha]$ in $\pionex$.

\begin{lemma}\label{closedembeddingonloopsprop}
If $\{e\}$ is closed in $E$ and $p:E\to X$ has the continuous path-covering property, then the induced map $\Omega^n(p):\Omega^n(E,e)\to \Omega^n(X,p(e))$ is a closed embedding for $n=1$ and a homeomorphism for $n\geq 2$.
\end{lemma}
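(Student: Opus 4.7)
The plan is to present $\Omega^n(p)$ as the restriction, to the subspace $\Omega^n(E,e)\subseteq (E,e)^{(I^n,\bfz)}$, of a homeomorphism that we already possess. For $n=1$ the relevant homeomorphism is $P(p):P(E,e)\to P(X,p(e))$, supplied directly by the continuous path-covering property; for $n\geq 2$ it is $p^{(I^n,\bfz)}:(E,e)^{(I^n,\bfz)}\to (X,p(e))^{(I^n,\bfz)}$, supplied by Corollary \ref{continuoushomotopycorollary}. As a restriction of a homeomorphism to a subspace, $\Omega^n(p)$ will automatically be a topological embedding onto its image, so the real content is the identification of this image.

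For $n=1$, I would note that $\Omega(E,e)=ev_1^{-1}(e)\subseteq P(E,e)$, and the image $\Omega(p)(\Omega(E,e))\subseteq \Omega(X,p(e))$ consists precisely of those loops $\alpha$ whose unique lift $P(p)^{-1}(\alpha)$ ends at $e$. Equivalently, this image equals $(ev_1\circ P(p)^{-1})^{-1}(\{e\})$. Since $ev_1\circ P(p)^{-1}:\Omega(X,p(e))\to E$ is continuous as a composition of continuous maps (restricted to the subspace $\Omega(X,p(e))\subseteq P(X,p(e))$) and $\{e\}$ is closed in $E$ by hypothesis, this preimage is closed. Hence $\Omega(p)$ is a closed embedding.

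For $n\geq 2$, the goal is to show the image is all of $\Omega^n(X,p(e))$. One inclusion is immediate. For the other, given $f\in \Omega^n(X,p(e))$, let $\wt{f}=(p^{(I^n,\bfz)})^{-1}(f)$ and let $x\in\partial I^n$. Since $n\geq 2$, $\partial I^n$ is path-connected and contains $\bfz$, so there is a path $\gamma:I\to\partial I^n$ from $\bfz$ to $x$. Then $f\circ\gamma$ is constant at $p(e)$, so its unique lift $\wt{f}\circ\gamma$ starting at $e$ must coincide with the constant path at $e$ by unique path-lifting; in particular $\wt{f}(x)=e$. Hence $\wt{f}\in\Omega^n(E,e)$, so the restricted map is a bijection onto $\Omega^n(X,p(e))$ and therefore a homeomorphism.

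The pivotal point, rather than a serious obstacle, is the path-connectedness of $\partial I^n$: for $n=1$ this fails, so not every loop in $X$ based at $p(e)$ lifts to a loop at $e$ (only those whose class lies in $p_{\#}(\pi_1(E,e))$, by Lemma \ref{basicliftingpropertieslemma}(2)), which is exactly why the $n=1$ case can be a proper closed embedding while $n\geq 2$ yields a full homeomorphism. No more than these observations, together with the closedness of $\{e\}$ used to upgrade ``embedding'' to ``closed embedding'' in the $n=1$ case, should be needed.
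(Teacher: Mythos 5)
Your proof is correct, and although it shares the paper's overall skeleton---realize $\Omega^n(p)$ as the restriction of a homeomorphism ($P(p)$ for $n=1$, $p^{(I^n,\bfz)}$ from Corollary \ref{continuoushomotopycorollary} for $n\geq 2$) and then identify its image---both of your sub-arguments take a genuinely different, and in fact more economical, route. For closedness of the image when $n=1$, the paper argues by contradiction: a net $\{\beta_j\}_{j\in J}\to\beta_\infty$ of loops lying in the image is packaged as a map on a directed arc-fan via Remark \ref{netstofansremark}, lifted, and shown to violate the closedness of $\Omega(E,e)=ev_1^{-1}(e)$ in $P(E,e)$. Your observation that the image is exactly $\left(ev_1\circ P(p)^{-1}\right)^{-1}(\{e\})$, hence closed as the preimage of a closed singleton under a continuous map, reaches the same conclusion in one line and makes the role of the hypothesis that $\{e\}$ is closed completely transparent. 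For surjectivity when $n\geq 2$, the paper joins $\bfz$ to a point of $\partial I^n$ by the linear path through the interior of the cube, observes that the resulting loop in $X$ is null-homotopic because it factors through $I^n/\partial I^n$, and invokes Lemma \ref{basicliftingpropertieslemma} to conclude that its lift is a loop; you instead choose the connecting path inside $\partial I^n$ itself (path-connected precisely because $n\geq 2$), so that the composite with $f$ is the constant path and its lift is forced to be constant by unique path-lifting alone. Your version avoids any appeal to homotopy lifting or to Lemma \ref{basicliftingpropertieslemma}, and it isolates cleanly where the dichotomy between the embedding statement for $n=1$ and the surjectivity for $n\geq 2$ comes from.
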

\begin{proof}
For the case $n=1$, note that since $\{e\}$ is closed in $E$, $ev_{1}^{-1}(e)=\Omega(E,e)$ is a closed subspace of $P(E,e)$. By assumption, $p$ induces a homeomorphism $P(p):P(E,e)\to P(X,p(e))$ on path spaces, which restricts to an embedding $\Omega(p):\Omega(E,e) \to \Omega(X,p(e))$. Recall that a path $\beta\in \Omega(X,p(e))$ lies in the image of $\Omega(p)$ if and only if the lift $\wt{\beta}\in P(E,e)$ ends at $e$, i.e. is a loop. Therefore, it suffices to show that $\Omega(p)$ has closed image in $\Omega(X,p(e))$. Suppose, to obtain a contradiction, that $\Omega(p)$ does not have closed image. Then there is a convergent net $\{\beta_j\}_{j\in J}\to \beta_{\infty}$ in $\Omega(X,p(e))$ where $\beta_j\in Im(\Omega(p))$ for every $j\in J$ and $\beta_{\infty}\notin Im(\Omega(p))$. This net uniquely defines a map $g:(F(J),v_0)\to (X,p(e))$ where $g(j,t)=\beta_j(t)$ for $j\in J\cup \{\infty\}$. According to Remark \ref{netstofansremark}, there is a unique map $\widetilde{g}:(F(J),v_0)\to (E,e)$ such that $p\circ \widetilde{g}=g$. Let $\wt{\beta}_{j}$ be the path $\wt{\beta}_j(t)=\wt{g}(j,t)$ for $j\in J\cup \{\infty\}$. Then $\{\wt{\beta}_j\}_{j\in J}\to \wt{\beta}_{\infty}$ in $P(E,e)$. But since $\beta_j$ lies in the image of $\Omega(p)$ for each $j\in J$, we have $\wt{\beta}_j\in \Omega(E,e)$ for all $j\in J$. Additionally, since $\beta_{\infty}$ does not lie in the image of $\Omega(p)$, $\beta_{\infty}$ does not lift to a loop, i.e. $\wt{\beta}_{\infty}\notin \Omega(E,e)$. However, this contradicts the fact that $\Omega(E,e)$ is closed in $P(E,e)$.

For $n\geq 2$, Corollary \ref{continuoushomotopycorollary} ensures that $p^{(I^n,\bfz)}:(E,e)^{(I^n,\bfz)}\to (X,p(e))^{(I^n,\bfz)}$ is a homeomorphism. Hence the restriction $\Omega^n(p):\Omega^n(E,e)\to\Omega^n(X,p(e))$ is an embedding. We check that $\Omega^n(p)$ is onto. Let $f:(I^n,\partial I^n)\to (X,p(e))$ be a map and consider the unique lift $\wt{f}:(I^n,\bfz)\to (E,e)$. It suffices to check that $\wt{f}(\partial I^n)=e$. Let $\mathbf{x}\in \partial I^n$ and $\gamma:(I,0)\to (I^n,\bfz)$ be the linear path from $\bfz$ to $\mathbf{x}$. Hence, we have a loop $f\circ \gamma:(I,\{0,1\})\to (X,p(e_2))$, which factors through the simply connected space $S^n\cong I^n/\partial I^n$. Since $[f\circ\gamma]=1\in \pi_1(X,p(e))$, Lemma \ref{basicliftingpropertieslemma} guarantees that the unique lift $\wt{f\circ\gamma}\in P(E,e)$ is a loop. Since $(p\circ \wt{f})\circ\gamma=f\circ\gamma=p\circ \wt{f\circ\gamma}$, unique path-lifting gives $\wt{f}\circ\gamma=\wt{f\circ\gamma}$. Hence $\wt{f}(\mathbf{x})=\wt{f}(\gamma(1))=\wt{f\circ\gamma}(1)=e$. This proves $\wt{f}(\partial I^n)=e$.
\end{proof}
\begin{theorem}\label{closedembeddingonpi1}
If $\{e\}$ is closed in $E$ and $p:E\to X$ has the continuous path-covering property, then the induced homomorphism $p_{\#}:\pi_n(E,e)\to \pi_n(X,p(e))$ is a closed embedding for $n=1$ and an isomorphism of quasitopological groups for $n\geq 2$.
\end{theorem}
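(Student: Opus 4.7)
The plan is to descend the mapping-space statement of Lemma \ref{closedembeddingonloopsprop} to the homotopy groups by means of the defining quotient maps $q_E:\Omega^n(E,e)\to\pi_n(E,e)$ and $q_X:\Omega^n(X,p(e))\to\pi_n(X,p(e))$. For both cases, the key technical device will be the set-theoretic identity
\[ q_X^{-1}\bigl(p_\#(S)\bigr) \;=\; \Omega^n(p)\bigl(q_E^{-1}(S)\bigr) \qquad \text{for every } S\subseteq \pi_n(E,e). \]
The inclusion ``$\supseteq$'' is clear from functorality. For ``$\subseteq$'', given $\alpha\in\Omega^n(X,p(e))$ with $[\alpha]=p_\#([\beta])$ for some $[\beta]\in S$, I need to produce a preimage $\gamma\in q_E^{-1}(S)$ with $p\circ\gamma=\alpha$: for $n\geq 2$ this is automatic since $\Omega^n(p)$ is bijective by Lemma \ref{closedembeddingonloopsprop}, whereas for $n=1$ I would apply Lemma \ref{basicliftingpropertieslemma}(2) (since $[\alpha]\in p_\#(\pi_1(E,e))$) to obtain a loop lift $\gamma$ of $\alpha$ and then use the injectivity of $p_\#$ from Lemma \ref{basicliftingpropertieslemma}(1) to conclude $[\gamma]=[\beta]\in S$.

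For $n=1$, applying the identity with $S=\pi_1(E,e)$ gives $q_X^{-1}(H)=\Omega(p)(\Omega(E,e))$ for $H:=p_\#(\pi_1(E,e))$; this set is closed in $\Omega(X,p(e))$ by Lemma \ref{closedembeddingonloopsprop}, so by the definition of the quotient topology $H$ is closed in $\pi_1(X,p(e))$. To verify the embedding property, I would take any closed $F\subseteq\pi_1(E,e)$, note that $q_E^{-1}(F)$ is closed, use the closed embedding $\Omega(p)$ to conclude $\Omega(p)(q_E^{-1}(F))=q_X^{-1}(p_\#(F))$ is closed in $\Omega(X,p(e))$, and appeal once more to the quotient topology to deduce $p_\#(F)$ is closed in $\pi_1(X,p(e))$. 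Combined with continuity and injectivity of $p_\#$, this shows $p_\#$ is a closed map and therefore a closed topological embedding.

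For $n\geq 2$, the target is to show $p_\#$ is a continuous, open bijection. Surjectivity of $p_\#$ is immediate from surjectivity of $\Omega^n(p)$ (Corollary \ref{continuoushomotopycorollary}). For injectivity, suppose $\wt{f},\wt{g}\in\Omega^n(E,e)$ satisfy $p\circ\wt{f}\simeq p\circ\wt{g}$ rel.\ $\partial I^n$ via a homotopy $h:I^n\times I\to X$. Regarding $h$ as a based map $(I^{n+1},\bfz)\to(X,p(e))$, I would lift it uniquely to $\wt{h}:(I^{n+1},\bfz)\to(E,e)$ via Corollary \ref{continuoushomotopycorollary}. A boundary-loop argument identical to that at the end of Lemma \ref{closedembeddingonloopsprop} --- for each $(\mathbf{x},t)\in\partial I^n\times I$, the straight-line path in $I^{n+1}$ from $\bfz$ to $(\mathbf{x},t)$ is sent by $h$ to a nullhomotopic loop in $X$ --- then forces $\wt{h}(\partial I^n\times I)=\{e\}$. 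The uniqueness of lifts of based maps $(I^n,\bfz)\to(X,p(e))$ given by Corollary \ref{continuoushomotopycorollary} then yields $\wt{h}(\cdot,0)=\wt{f}$ and $\wt{h}(\cdot,1)=\wt{g}$, so $[\wt{f}]=[\wt{g}]$ in $\pi_n(E,e)$.

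Finally, for openness of $p_\#$ in the case $n\geq 2$, I would apply the identity once more: given open $U\subseteq\pi_n(E,e)$, the preimage $q_E^{-1}(U)$ is open, so $\Omega^n(p)(q_E^{-1}(U))=q_X^{-1}(p_\#(U))$ is open in $\Omega^n(X,p(e))$ because $\Omega^n(p)$ is a homeomorphism by Lemma \ref{closedembeddingonloopsprop}; the quotient topology on $\pi_n(X,p(e))$ then delivers openness of $p_\#(U)$. Hence $p_\#$ is a continuous, open, bijective homomorphism, i.e.\ an isomorphism of quasitopological groups. The main obstacle will be a clean verification of the displayed set-theoretic identity (in particular the $n=1$ direction that uses the unique-lifting criterion for loops); everything else should follow formally from Lemma \ref{closedembeddingonloopsprop} and the universal property of the quotient topology.
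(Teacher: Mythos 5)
Your argument is correct, and the $n=1$ half is essentially the paper's proof: descend the closed embedding $\Omega(p)$ of Lemma \ref{closedembeddingonloopsprop} through the quotient maps via the identity $q_X^{-1}(p_\#(F))=\Omega(p)(q_E^{-1}(F))$. For $n\geq 2$ you take a genuinely different route. The paper observes that $q_X\circ\Omega^n(p)$ is a quotient map (homeomorphism followed by quotient), so the universal property forces $p_\#$ to be a topological quotient map, and then gets injectivity from the long exact sequence of the Serre fibration supplied by Theorem \ref{fibrationtheorem}; a bijective quotient map is a homeomorphism. You instead prove injectivity by directly lifting the homotopy $h:I^{n+1}\to X$ and prove openness from the set identity. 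This is more self-contained (no appeal to Theorem \ref{fibrationtheorem} or the LES), but two steps deserve more care than you give them. First, your claim that the displayed identity is ``automatic'' for $n\geq 2$ is not quite right: bijectivity of $\Omega^n(p)$ produces a lift $\gamma$ of $\alpha$, but to place $[\gamma]$ in $S$ you still need injectivity of $p_\#$ on $\pi_n$, exactly as in the $n=1$ case; so the identity may only be invoked (as you in fact do, for openness) after injectivity has been established, and the ordering should be made explicit to avoid circularity. Second, the boundary-loop argument is not literally ``identical'' to the one in Lemma \ref{closedembeddingonloopsprop}: there $f$ collapses all of $\partial I^n$ and hence factors through $S^n$, whereas your $h$ only collapses $\partial I^n\times I$, a proper subset of $\partial I^{n+1}$. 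The loop $h\circ\gamma$ is still null-homotopic --- e.g.\ because $h$ factors through $I^{n+1}/(\partial I^n\times I)\simeq \Sigma(\partial I^n\times I)\simeq S^n$, which is simply connected for $n\geq 2$, or more simply by replacing the straight-line path with one that first travels through $I^n\times\{0\}$ to $(\mathbf{x},0)$ and then rises through $\{\mathbf{x}\}\times I$ --- but this requires a sentence of justification. With those two points patched, your proof is complete and arguably more elementary than the paper's for $n\geq 2$.
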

\begin{proof}
For $n=1$, $p_{\#}$ is injective by Lemma \ref{basicliftingpropertieslemma} and continuous by the functorality of the quotient topology. Consider the following commutative diagram where the vertical maps are the natural quotient maps identifying homotopy classes.
\[\xymatrix{
\Omega(E,e) \ar[d]_-{q_E} \ar[r]^-{P(p)} & \Omega(X,p(e)) \ar[d]^-{q_X}\\
\pi_1(E,e) \ar[r]_-{p_{\#}} & \pi_1(X,p(e))
}\]
Suppose that $C\subseteq \pi_1(E,e)$ is closed and non-empty. Then $q_{E}^{-1}(C)$ is closed in $\Omega(E,e)$ and since the top map is a closed embedding by Lemma \ref{closedembeddingonloopsprop}, $P(p)(q_{E}^{-1}(C))$ is closed in $\Omega(X,p(e))$. Since $q_{X}$ is a quotient map, it suffices to show that $q_{X}^{-1}(p_{\#}(C))$ is closed. However, this is a consequence of the equality $P(p)(q_{E}^{-1}(C))=q_{X}^{-1}(p_{\#}(C))$, which may be verified using standard unique lifting arguments.

For $n\geq 2$, consider the following commuting diagram where the vertical maps are the natural quotient maps. By Lemma \ref{closedembeddingonloopsprop}, the top map is a homeomorphism.
\[\xymatrix{
\Omega^n(E,e) \ar[d]_-{q_E} \ar[r]^-{\Omega^n(p)} & \Omega^n(X,p(e)) \ar[d]_{q_X} \\
\pi_n(E,e) \ar[r]_-{p_{\#}} & \pi_n(X,p(e))
}\]
The universal property of quotient maps ensures that $p_{\#}$ is a topological quotient map. By Theorem \ref{fibrationtheorem}, $p$ is a Serre fibration with totally path-disconnected fibers. The injectivity of $p_{\#}$ follows from the long exact sequence on homotopy groups associated with $p$. Thus, $p_{\#}$ is a group isomorphism and a homeomorphism.
\end{proof}

\begin{corollary}
Suppose $p_i:E_i\to X$, $i\in\{1,2\}$ are maps with the continuous path-covering property and $f:E_1\to E_2$ is a weak homotopy equivalence such that $p_2\circ f=p_1$. Then $f$ is a weak topological homotopy equivalence. 
\end{corollary}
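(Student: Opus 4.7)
The idea is to reduce everything to the already-established Theorem \ref{closedembeddingonpi1} by exploiting the factorization of $f_\#$ through the two induced maps $(p_i)_\#$. Fix $e_1\in E_1$ and set $e_2=f(e_1)$, so that $p_1(e_1)=p_2(e_2)=x_0$. The hypothesis $p_2\circ f=p_1$, combined with functoriality of $\pi_n$ under the quotient topology, yields the factorization $(p_1)_\#=(p_2)_\#\circ f_\#$ at every level $n\geq 1$, and all three homomorphisms are continuous. Since $f$ is assumed to be a weak homotopy equivalence, $f_\#$ is already a bare group isomorphism; the only remaining task is to verify that $f_\#^{-1}$ is continuous for each $n\geq 1$.

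For $n\geq 2$, Theorem \ref{closedembeddingonpi1} says that each $(p_i)_\#$ is an isomorphism of quasitopological groups, hence a homeomorphism. Thus $f_\#=(p_2)_\#^{-1}\circ(p_1)_\#$ is the composite of two homeomorphisms and the conclusion is immediate.

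For $n=1$, Theorem \ref{closedembeddingonpi1} provides that each $(p_i)_\#$ is a closed topological embedding, and therefore restricts to a homeomorphism $\pi_1(E_i,e_i)\to H_i$ onto its image $H_i\leq \pi_1(X,x_0)$. Because $f_\#$ is a group isomorphism, the identity $(p_1)_\#=(p_2)_\#\circ f_\#$ forces $H_1=H_2$ as subsets of $\pi_1(X,x_0)$. Post-composing the inverse of the homeomorphism $(p_2)_\#:\pi_1(E_2,e_2)\to H_2$ with the homeomorphism $(p_1)_\#:\pi_1(E_1,e_1)\to H_1$ then recovers $f_\#$, showing it is a homeomorphism.

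The only bookkeeping point is the hypothesis in Theorem \ref{closedembeddingonpi1} that the singletons $\{e_i\}$ be closed in $E_i$; this is the one place where I would expect a mild subtlety, though not a genuine obstacle. It is automatic whenever $X$ is $T_1$ (since then $E_i$ is $T_1$ by Proposition \ref{tpdfibersprop}), which covers the Hausdorff settings of the paper's main classification results.
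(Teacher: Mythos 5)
Your argument is correct, but it takes a different route from the paper's. The paper first observes that $f$ is a bijection and then invokes the two-out-of-three property (Lemma \ref{compositionlemma}(2)) to conclude that $f$ \emph{itself} has the continuous path-covering property; it then applies Theorem \ref{closedembeddingonpi1} directly to the map $f:E_1\to E_2$, so that $f_{\#}$ is a closed embedding on $\pi_1$ (hence a topological isomorphism once surjectivity from the weak-equivalence hypothesis is added) and a topological isomorphism on $\pi_n$ for $n\geq 2$. You instead apply Theorem \ref{closedembeddingonpi1} only to $p_1$ and $p_2$ and extract $f_{\#}=(p_2)_{\#}^{-1}\circ(p_1)_{\#}$ from the factorization, using that a closed embedding is a homeomorphism onto its image and that $f_{\#}$ being a group isomorphism forces the two images in $\pi_1(X,x_0)$ to coincide. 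Both arguments are sound; yours is a purely formal diagram chase on topologized homotopy groups that avoids Lemma \ref{compositionlemma} entirely, while the paper's route yields the additional structural facts (that $f$ is a bijection with the continuous path-covering property) which are recorded in Remark \ref{surjectiveremark} and reused in the classification arguments. Your closing remark about the hypothesis that $\{e_i\}$ be closed is well taken: the corollary as stated omits a $T_1$ assumption on $X$, and the paper's own proof has exactly the same implicit dependence (acknowledged parenthetically in Remark \ref{surjectiveremark}), resolved as you say via Proposition \ref{tpdfibersprop}.
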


\begin{proof}
First, notice that $f$ must be a bijection. By Lemma \ref{compositionlemma}, $f$ has the continuous path-covering property and it follows from Theorem \ref{closedembeddingonpi1} that $f$ is a weak topological homotopy equivalence.
\end{proof}

\begin{theorem}\label{liftingcorrespondencetheorem}
$p:(E,e_0)\to (X,x_0)$ has the continuous path-covering property and $H=p_{\#}(\pi_1(E,e_0))$ where $\{x_0\}$ is closed in $X$. Then there is a canonical continuous bijection $\phi:\pionex/H\to p^{-1}(x_0)$ defined by $\phi(H[\alpha])=\wt{\alpha}(1)$ where $\wt{\alpha}$ is the unique lift of $\alpha$ starting at $e_0$. Moreover, $\phi$ is a homeomorphism if and only if $ev_1:P(E,e_0)\to E$ is a quotient map.
\end{theorem}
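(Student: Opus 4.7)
The plan is to verify four properties of $\phi$ in sequence---well-definedness, injectivity, surjectivity, and continuity---and then reduce the homeomorphism equivalence to a statement about endpoint-evaluation maps. First I would show $\phi$ is a well-defined bijection. If $H[\alpha]=H[\beta]$, then $[\alpha\cdot\beta^-]\in H = p_{\#}(\pi_1(E,e_0))$, so by Lemma \ref{basicliftingpropertieslemma}(2) the unique lift of $\alpha\cdot\beta^-$ at $e_0$ is a loop. Splitting this lift at the midpoint and applying unique path-lifting forces $\wt{\alpha}(1)=\wt{\beta}(1)$, establishing well-definedness; reversing the argument gives injectivity. Surjectivity follows from path-connectedness of $E$: any $e\in p^{-1}(x_0)$ is the endpoint of some $\wt{\gamma}\in P(E,e_0)$, and then $\phi(H[p\circ\wt{\gamma}])=e$.

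For continuity, I would observe that the composite
\[\Omega(X,x_0)\hookrightarrow P(X,x_0)\xrightarrow{P(p)^{-1}} P(E,e_0)\xrightarrow{ev_1} E\]
is continuous, takes values in $p^{-1}(x_0)$, and factors through the canonical quotient $q:\Omega(X,x_0)\to \pi_1(X,x_0)/H$ via $\phi$. The universal property of $q$ then yields continuity of $\phi$.

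For the homeomorphism equivalence, set $Q = ev_1^{-1}(p^{-1}(x_0))\subseteq P(E,e_0)$; under the homeomorphism $P(p)$, $Q$ corresponds to $\Omega(X,x_0)$. The restriction $ev_1|_Q$ factors as $Q\xrightarrow{q'} \pi_1(X,x_0)/H \xrightarrow{\phi} p^{-1}(x_0)$, where $q'$ is a quotient map, so $\phi$ is a homeomorphism if and only if $ev_1|_Q$ is a quotient map. The direction ``$ev_1$ quotient $\Rightarrow ev_1|_Q$ quotient'' follows because $\{x_0\}$ closed in $X$ makes $Q$ the preimage of a closed subspace, and restrictions of quotient maps to preimages of closed sets are again quotient.

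The main obstacle will be the converse direction: showing that if $ev_1|_Q$ is a quotient map, then so is $ev_1:P(E,e_0)\to E$. My plan is to exploit path-concatenation together with the Serre fibration structure provided by Theorem \ref{fibrationtheorem}, in order to transport the quotient property from the distinguished fiber to all of $E$. Given $e_1\in E$ and $\wt{\alpha}\in P(E,e_0)$ with $\wt{\alpha}(1)=e_1$, concatenation by $\wt{\alpha}^-$ furnishes a continuous map $P(E,e_1)\to P(E,e_0)$ that compares $P(E,e_0)$-neighborhoods of $\wt{\alpha}$ with $P(E,e_1)$-neighborhoods of the constant loop at $e_1$. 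The delicate point will be verifying that this translation carries $ev_1$-saturated open sets to $ev_1$-saturated open sets in a way compatible with the quotient structure on the distinguished fiber; this is where the continuous path-covering hypothesis, and not merely unique path-lifting, should be essential.
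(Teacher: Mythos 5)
Your treatment of well-definedness, bijectivity, and continuity of $\phi$, and of the implication ``$ev_1:P(E,e_0)\to E$ quotient $\Rightarrow$ $\phi$ a homeomorphism'' via restricting the quotient map to the saturated set $Q=ev_1^{-1}(p^{-1}(x_0))$, which is the preimage of a closed set because $\{x_0\}$ is closed, coincides with the paper's argument. Your intermediate reduction is also correct and is worth keeping: since $ev_1|_Q=\phi\circ q'$ with $q'$ a surjective quotient map, $\phi$ is a homeomorphism if and only if $ev_1|_Q:Q\to p^{-1}(x_0)$ is quotient, equivalently if and only if $\psi:\Omega(X,x_0)\to p^{-1}(x_0)$, $\psi(\alpha)=\wt{\alpha}(1)$, is quotient.

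The gap is exactly the step you defer: ``$ev_1|_Q$ quotient $\Rightarrow$ $ev_1:P(E,e_0)\to E$ quotient.'' No concatenation/translation argument can close it, because the implication is false. Take $E=X=Z$ and $p=id_Z$ for a path-connected metric space $Z$ with $ev_1:P(Z,z_0)\to Z$ not quotient --- the paper's own spaces $X_1,X_2$ of Example \ref{counterexample} serve, as recorded in Example \ref{zeemansexample}. Then $p$ has the continuous path-covering property, $H=\pi_1(Z,z_0)$, and both $\pi_1(Z,z_0)/H$ and the fiber are singletons, so $\phi$ is trivially a homeomorphism, yet $ev_1$ is not quotient. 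The obstruction is already visible in your reduction: $ev_1|_Q$ only constrains the topology of $E$ along the single fiber $p^{-1}(x_0)$, whereas ``$ev_1$ quotient'' constrains it everywhere; concatenation by $\wt{\gamma}^{-}$ transports $ev_1$-saturated open sets between basepoints of path spaces but never manufactures open subsets of $E$ off the distinguished fiber. For what it is worth, the paper's own proof of this direction rests on the one-line assertion that $\psi=ev_1\circ(P(p)^{-1}\circ i)$ being quotient forces $ev_1$ to be quotient; since $\psi$ surjects only onto $p^{-1}(x_0)$ rather than onto $E$, that inference does not go through either. What is actually proved --- and what your factorization establishes cleanly --- is the biconditional ``$\phi$ is a homeomorphism iff $\psi$ is quotient''; the stated condition on $ev_1:P(E,e_0)\to E$ is strictly stronger, and the ``only if'' half of the theorem as written should be weakened accordingly (the ``if'' half is fine).
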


\begin{proof}
The map $\phi$ is analogous to the correspondence used in classical covering space theory and is well-defined by applying Lemma \ref{basicliftingpropertieslemma}. To verify the continuity of $\phi$, we consider the following diagram where $i$ is inclusion and the vertical map $q(\alpha)=H[\alpha]$ is quotient.
\[\xymatrix{
\Omega(X,x_0) \ar[ddrr]^{\psi} \ar[dd]_-{q} \ar[rr]^{i} && P(X,x_0) \ar[d]_{\cong}^-{P(p)^{-1}} \\
   &  & P(E,e_0) \ar[d]^-{ev_1}\\
\pionex/H \ar@{-->}[rr]_-{\phi} && E
}\]
The composition $\psi=ev_1\circ P(p)^{-1}\circ i$ is continuous, which maps a loop $\alpha$ based at $x$ to the endpoint $\wt{\alpha}(1)$ of the lift starting at $e_0$. Since $E$ is assumed to be path connected, $\psi$ maps $\Omega(X,x_0)$ onto the fiber $p^{-1}(x_0)$. Lemma \ref{basicliftingpropertieslemma} implies that $\psi(\alpha)=\psi(\beta)$ if and only if $H[\alpha]=H[\beta]$. Since $q$ is quotient, we have a unique continuous bijection $\phi:\pionex/H\to p^{-1}(x_0)$ defined by $\phi(H[\alpha])=\psi(\alpha)$.

Moreover, if $\phi$ is quotient, then so is $\psi=ev_1\circ (P(p)^{-1}\circ i)$ from which it follows that $ev_1$ is quotient. Conversely, if $ev_1$ is quotient, then $P(p)^{-1}\circ ev_1$ is quotient. Since $\{x_0\}$ is closed, $p^{-1}(x_0)$ is closed in $E$. Since $(P(p)^{-1}\circ ev_1)^{-1}(p^{-1}(x_0))=\Omega(X,x_0)$, the restriction $\psi:\Omega(X,x_0)\to p^{-1}(x_0)$ of the quotient map $P(p)^{-1}\circ ev_1$ is also quotient. Since $\phi\circ q=\psi$ where $q$ and $\psi$ are quotient, $\phi$ must also be quotient and thus a homeomorphism.
\end{proof}

\begin{corollary}\label{tpdcorollary}
If $p:(E,e_0)\to (X,x_0)$ has the continuous path-covering property and $H=p_{\#}(\pi_1(E,e_0))$, then the coset space $\pionex/H$ is totally path disconnected.
\end{corollary}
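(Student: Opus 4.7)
The plan is to reduce the total path-disconnectedness of $\pionex/H$ to the already-established total path-disconnectedness of the fiber $p^{-1}(x_0)$ by pushing paths across the continuous bijection supplied by Theorem \ref{liftingcorrespondencetheorem}.

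More precisely, I would first invoke Theorem \ref{liftingcorrespondencetheorem} to obtain the canonical continuous bijection $\phi:\pionex/H\to p^{-1}(x_0)$ defined by $\phi(H[\alpha])=\wt{\alpha}(1)$. (Even though Theorem \ref{liftingcorrespondencetheorem} is stated under the hypothesis that $\{x_0\}$ is closed, inspection of its proof shows that the existence and continuity of $\phi$ uses only that $p$ has the continuous path-covering property: the map $\psi=ev_1\circ P(p)^{-1}\circ i:\Omega(X,x_0)\to p^{-1}(x_0)$ is continuous, surjects onto the fiber by path-connectedness of $E$, and factors through $q:\Omega(X,x_0)\to\pionex/H$ by Lemma \ref{basicliftingpropertieslemma}; the ``$\{x_0\}$ closed'' hypothesis enters only in the ``moreover'' clause about $\phi$ being a homeomorphism.)

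Next, I would apply Proposition \ref{tpdfibersprop}: since $p$ has the continuous path-covering property it has the unique path-lifting property, and therefore every fiber of $p$ is totally path disconnected. In particular, $p^{-1}(x_0)$ is totally path disconnected.

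Finally, for any path $\gamma:I\to \pionex/H$, the composite $\phi\circ\gamma:I\to p^{-1}(x_0)$ is a path in a totally path-disconnected space and hence constant. Since $\phi$ is injective, $\gamma$ itself is constant, so $\pionex/H$ is totally path disconnected. There is no genuine obstacle here: the entire content is the observation that a continuous bijection into a totally path-disconnected space pulls total path-disconnectedness back, and all the substantive work has already been done in Proposition \ref{tpdfibersprop} and Theorem \ref{liftingcorrespondencetheorem}.
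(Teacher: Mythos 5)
Your proof is correct and is essentially identical to the paper's: both deduce the result by combining the total path-disconnectedness of $p^{-1}(x_0)$ from Proposition \ref{tpdfibersprop} with the continuous injection $\phi:\pionex/H\to p^{-1}(x_0)$ from Theorem \ref{liftingcorrespondencetheorem}. Your added remark that the ``$\{x_0\}$ closed'' hypothesis of Theorem \ref{liftingcorrespondencetheorem} is not needed for the existence and continuity of $\phi$ is accurate and in fact addresses a small mismatch in hypotheses that the paper glosses over.
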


\begin{proof}
By Proposition \ref{tpdfibersprop}, $p^{-1}(x_0)$ is totally path disconnected and Theorem \ref{liftingcorrespondencetheorem} implies that $\pionex/H$ continuously injects into $p^{-1}(x_0)$.
\end{proof}

\section{Classifying maps with continuous path-covering property}\label{sectionclassification}

To make the statement of Theorem \ref{mainresult1} precise, we define the following relations.

\begin{definition}
Consider two maps $p_1:E_1\to X$ and $p_2:E_2\to X$ with the continuous path-covering property.
\begin{enumerate}
\item We say $p_1$ and $p_2$ are \textit{equivalent} if there exists a homeomorphism $h:E_1\to E_2$ such that $p_2\circ h=p_1$ and we refer to $h$ as an \textit{equivalence}.
\item A \textit{simple weak equivalence between $p_1$ and $p_2$} is a triple $(p_3,f_1,f_2)$ where $p_3:E_3\to X$ has the continuous path-covering property and $f_i:E_3\to E_i$, $i\in\{1,2\}$ are weak homotopy equivalences such that $p_i\circ f_i=p_3$. If a simple weak equivalence exists between $p_1$ and $p_2$, we write $p_1\sim_{s}p_2$.
\[\xymatrix{
E_1 \ar[dr]_-{p_1} & E_3  \ar[l]_-{f_1} \ar[d]^-{p_3} \ar[r]^-{f_2} & E_2 \ar[dl]^-{p_2} \\
& X
}\]
\item We say $p_1$ and $p_2$ are \textit{weakly equivalent} if there exists a finite chain of simple weak equivalences $p_1=q_1\sim_{s} q_2\sim_{s} \cdots \sim_{s}q_m=p_2$. If such a chain exists, we say $p_1$ and $p_2$ are \textit{weakly equivalent}.
\end{enumerate}
\end{definition}

\begin{remark}\label{surjectiveremark}
Note that if we have $p_2\circ f=p_1$ for maps $p_i:E_i\to X$ with the continuous path-covering property, then $f$ has the continuous path-covering property by Lemma \ref{compositionlemma} and $f$ must be surjective. If, in addition, $f$ is $\pi_1$-surjective, then standard lifting arguments give that $f$ is a bijection. If $f$ induces an isomorphism on $\pi_1$, then $f$ must also be a weak topological homotopy equivalence by Theorem \ref{closedembeddingonpi1} (assuming $X$ is at least $T_1$). Hence, one may equivalently define ``simple weak equivalence" by either replacing weak homotopy equivalences $f_1,f_2$ with the weaker notion of $\pi_1$-surjective maps or the stronger notion of bijective, weak topological homotopy equivalences.
\end{remark}

\begin{lemma}\label{weakequivalencelemma}
If $p_1:E_1\to X$ and $p_2:E_2\to X$ are weakly equivalent maps with the continuous path-covering property and $e_1\in E_1$, then $(p_1)_{\#}(\pi_1(E_1,e_1))=(p_2)_{\#}(\pi_1(E_2,e_2))$ for some $e_2\in E_2$.
\end{lemma}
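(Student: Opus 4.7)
The plan is induction on the length $m$ of the chain $p_1 = q_1 \sim_s q_2 \sim_s \cdots \sim_s q_m = p_2$ of simple weak equivalences witnessing the weak equivalence. The base case $m=1$ is vacuous, so the essential content lies in the single-link step: showing that whenever $(p_3,f_1,f_2)$ is a simple weak equivalence between $p_1$ and $p_2$, for every $e_1 \in E_1$ there exists $e_2 \in E_2$ satisfying $(p_1)_{\#}(\pi_1(E_1,e_1)) = (p_2)_{\#}(\pi_1(E_2,e_2))$.

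For this single-link step, I would first appeal to Remark \ref{surjectiveremark} applied to each factorization $p_i \circ f_i = p_3$ with $i \in \{1,2\}$: since $p_3$ and $p_i$ both have the continuous path-covering property, $f_i$ inherits the continuous path-covering property and is surjective; because $f_i$ is a weak homotopy equivalence, it is in particular $\pi_1$-surjective, so the remark upgrades $f_i$ to a bijection. Given $e_1 \in E_1$, let $e_3 \in E_3$ be the unique point with $f_1(e_3) = e_1$, and set $e_2 = f_2(e_3)$. Because $f_1$ is a weak homotopy equivalence, $(f_1)_{\#}:\pi_1(E_3,e_3) \to \pi_1(E_1,e_1)$ is an isomorphism, and so $(p_1)_{\#}(\pi_1(E_1,e_1)) = (p_1 \circ f_1)_{\#}(\pi_1(E_3,e_3)) = (p_3)_{\#}(\pi_1(E_3,e_3))$. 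The identical reasoning applied to $f_2$ yields $(p_2)_{\#}(\pi_1(E_2,e_2)) = (p_3)_{\#}(\pi_1(E_3,e_3))$, and the two subgroups therefore coincide.

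For a chain of length $m \geq 2$, I would invoke the single-link conclusion iteratively: starting from $e_1 \in E_1$, the link between $q_1 = p_1$ and $q_2$ produces a basepoint in the domain of $q_2$ whose image under $(q_2)_{\#}$ agrees with $(p_1)_{\#}(\pi_1(E_1,e_1))$; iterating across the remaining $m-2$ links yields the desired $e_2 \in E_2 = $ domain of $q_m$. The main obstacle is essentially cosmetic: this is a routine diagram chase that uses only the functoriality of $\pi_1$ together with the bijectivity of each $f_i$ supplied by Remark \ref{surjectiveremark}. The one point to watch is that basepoints propagate consistently through each bijective $f_1$-type map, but this is automatic once bijectivity is in hand.
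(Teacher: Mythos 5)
Your proposal is correct and follows essentially the same route as the paper: for each simple weak equivalence, use Remark \ref{surjectiveremark} to find a preimage basepoint $e_3$ in the middle space over $e_1$, push it forward to $e_2=f_2(e_3)$, and use the $\pi_1$-surjectivity of the weak homotopy equivalences $f_1,f_2$ to identify the two image subgroups with $(p_3)_{\#}(\pi_1(E_3,e_3))$, then iterate along the zig-zag. The only cosmetic difference is that you invoke full bijectivity of $f_1$ where the paper needs only surjectivity to choose $e_3$.
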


\begin{proof}
Fix $e_1\in E_1$ and $x_0=p_1(e_1)$. If there is a weak homotopy equivalence $f:E_1\to E_2$ such that $p_2\circ f=p_1$, we set $e_2=f(e_1)$. Since $f$ is $\pi_1$-surjective, the equality $(p_1)_{\#}(\pi_1(E_1,e_1))=(p_2)_{\#}(\pi_1(E_2,e_2))$ follows. If there is a weak homotopy equivalence $g:E_2\to E_1$ such that $p_1\circ g=p_2$, then $g$ is surjective by Remark \ref{surjectiveremark}. Hence, we may find $e_2\in E_2$ with $g(e_2)=e_1$ from which $(p_1)_{\#}(\pi_1(E_1,e_1))=(p_2)_{\#}(\pi_1(E_2,e_2))$ follows. Applying zig-zags of simple weak equivalences, the lemma follows. 
\end{proof}

Notice that Lemma \ref{weakequivalencelemma} implies that the weak equivalence classes of maps with the continuous path-covering property over a given space $X$ form a set. To give a self-contained proof of Theorem \ref{mainresult1}, we require a sequence of lemmas involving quotient space constructions. 

\begin{lemma}\label{coreflectionlemma}
Suppose $X$ is a path-connected Hausdorff space, $x_0\in X$, and $c(X)$ is the space with the same underlying set as $X$ but with the quotient topology inherited from $ev_1:P(X,x_0)\to X$. Then
\begin{enumerate}
\item the identity function $f:c(X)\to X$ has the continuous path-covering property and is a weak topological homotopy equivalence,
\item $ev_1:P(c(X),x_0)\to c(X)$ is quotient, i.e. $c(c(X))=c(X)$.
\end{enumerate}
\end{lemma}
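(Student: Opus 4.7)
My plan rests on the observation that $c(X)$ and $X$ share the same underlying set, with $c(X)$ carrying the (generally) finer topology induced by the quotient map $ev_1:P(X,x_0)\to X$; so the identity $f:c(X)\to X$ is automatically a continuous bijection. I would identify $P(c(X),x)$ with $P(X,x)$ as topological spaces for every $x\in X$, from which both (1) and (2) follow quickly.

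First, I would verify that the two path spaces agree as sets. For $\alpha\in P(X,x_0)$, the standard adjoint $\hat\alpha:I\to P(X,x_0)$, $\hat\alpha(t)(s)=\alpha(ts)$, is continuous and satisfies $ev_1\circ \hat\alpha=\alpha$. Composing with the continuous map $ev_1:P(X,x_0)\to c(X)$ shows $\alpha$ is continuous into $c(X)$. For a general basepoint $x$, fix a path $\beta$ from $x_0$ to $x$ in $X$ (using path-connectedness) and apply the same argument to $\beta\cdot\gamma\in P(X,x_0)$ to deduce that any $\gamma\in P(X,x)$ is continuous to $c(X)$.

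For the topological identification, the inclusion $P(c(X),x)\subseteq P(X,x)$ as topologies is automatic from continuity of $f$. For the reverse direction---the main technical step---I would fix $\beta$ as above and factor the evaluation $ev:P(X,x)\times I\to X$, $(\gamma,t)\mapsto\gamma(t)$, through $ev_1$ via the continuous map $\Phi:P(X,x)\times I\to P(X,x_0)$, $\Phi(\gamma,t)=\beta\cdot\gamma_t$ where $\gamma_t(s)=\gamma(ts)$. Composing with $ev_1:P(X,x_0)\to c(X)$ yields a continuous $ev:P(X,x)\times I\to c(X)$, and the exponential law for the locally compact Hausdorff factor $I$ produces a continuous map $P(X,x)\to c(X)^I$ whose image lies in $P(c(X),x)$ and equals the identity on the underlying set. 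The subtlety here is purely in confirming that the evaluation really does factor through the defining quotient $ev_1$; once that is in hand, the exponential law does the rest.

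With $P(f):P(c(X),x)\to P(X,x)$ now the identity homeomorphism for every $x$, $f$ has the continuous path-covering property. Since $c(X)$ is $T_1$ (its topology refines the Hausdorff topology of $X$), Theorem \ref{closedembeddingonpi1} gives $f_{\#}$ as a closed embedding on $\pi_1$ and an isomorphism of quasitopological groups on $\pi_n$ for $n\geq 2$. The same ``factor-through-$ev_1$'' reasoning applied to maps $(I^n,\partial I^n)\to (X,x_0)$ shows that the based loops of $X$ and $c(X)$ coincide as sets, so $f_{\#}$ is bijective on $\pi_1$ and hence a topological isomorphism there as well; this completes (1). Part (2) is then essentially tautological: under the identification $P(c(X),x_0)=P(X,x_0)$, the map $ev_1:P(c(X),x_0)\to c(X)$ becomes the defining quotient $ev_1:P(X,x_0)\to c(X)$, which is quotient by construction.
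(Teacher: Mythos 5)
Your proof is correct, and for the main technical point of part (1) it takes a genuinely different route from the paper's. The paper invokes Remark \ref{netstofansremark} to reduce the continuous path-covering property to lifting based maps $g:(F(J),v_0)\to (X,x_0)$ from directed arc-fans, and proves continuity of $g$ into $c(X)$ by factoring through $ev_1:P(F(J),v_0)\to F(J)$ (quotient because $F(J)$ is contractible) and invoking the universal property of quotient maps. You instead work directly with the compact-open topology: you factor the evaluation $P(X,x)\times I\to X$ through the defining quotient $ev_1:P(X,x_0)\to c(X)$ via the concatenation--reparameterization map $\Phi(\gamma,t)=\beta\cdot\gamma_t$ and then curry. (A small imprecision: the currying direction of the exponential law needs no local compactness; where local compactness of $I$ is actually used is in the continuity of $(\gamma,t)\mapsto\gamma_t$, hence of $\Phi$.) Your approach buys the cleaner conclusion that $P(c(X),x)$ and $P(X,x)$ are literally equal as topological spaces, and it explicitly handles arbitrary basepoints $x$ via the auxiliary path $\beta$, whereas the paper's argument is written only at $x_0$ and leaves the translation to other basepoints implicit; the paper's arc-fan technique, in turn, is the one that recurs elsewhere (e.g., in Lemma \ref{hurfibrationlemma}). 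Your handling of $\pi_1$-surjectivity (loops agree because the path spaces agree as sets) and of part (2) (under the identification $P(c(X),x_0)=P(X,x_0)$ the evaluation becomes the defining quotient) matches the paper's in substance.
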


\begin{proof}
Let $J$ be a directed set and $g:(F(J),v_0)\to (X,x_0)$ be a based map on the directed arc-fan over $J$. Applying Remark \ref{netstofansremark}, we show that $g:(F(J),v_0)\to (c(X),x_0)$ is continuous. Since $F(J)$ is contractible, $ev_1:P(F(J),v_0)\to F(J)$ is a retraction and is therefore a topological quotient map. Consider the induced map $P(g)$ in the diagram below.
\[\xymatrix{
P(F(J),v_0) \ar[d]_-{ev_1} \ar[r]^{P(g)} & P(X,x_0) \ar[d]^-{ev_1}\\
F(J) \ar[r]_-{g} & c(X)
}\]
Since the top composition is continuous and the left vertical map is quotient, the bottom map is continuous by the universal property of quotient maps. Therefore, since all maps of directed arc-fans lift, $f$ has the continuous path-covering property. Having a finer topology than $X$, $c(X)$ is Hausdorff. Lemma \ref{closedembeddingonpi1} then gives that $f$ induces a closed embedding on $\pi_1$ and a topological isomorphism on $\pi_n$ for $n\geq 2$. Hence, it suffices to show $f$ is $\pi_1$-surjective. Given a loop $\alpha\in \Omega(X,x_0)$, there is a unique lift $\wt{\alpha}\in P(c(X),x_0)$ such that $f\circ \wt{\alpha}=\alpha$. However, since the underlying function of $f$ is the identity, it must be that $\wt{\alpha}=\alpha$ as a loop. Since $\Omega(f):\Omega(c(X),x_0)\to \Omega(X,x_0)$ is a bijection, it follows that $f_{\#}:\pi_1(c(X),x_0)\to \pi_1(X,x_0)$ is surjective. This completes the proof of (1).

For (2), recall that we have shown $f$ has the continuous path-covering property. Hence, the top map in the triangle below is a homeomorphism and the right map is quotient by construction, the left evaluation map is the composition of quotient maps and is therefore quotient.
\[\xymatrix{
P(c(X),x_0)  \ar[dr]_-{ev_1} \ar[rr]^-{P(f)} && P(X,x_0) \ar[dl]^-{ev_1} \\
& c(X)
}\]
\end{proof}

\begin{example}\label{zeemansexample}
Path-connected spaces $(Z,z_0)$ for which $c(Z)\to Z$ is not a homeomorphism exist, e.g. spaces $X_1$ and $X_2$ in Example \ref{counterexample} and Zeeman's example \cite[Example 6.6.14]{HW60}. For any such space, the identity function $c(Z)\to Z$ and the identity map $Z\to Z$ are non-equivalent, weakly equivalent maps with the continuous path-covering property that both correspond to $H=\pi_1(Z,z_0)$. Therefore, the main statement of Theorem \ref{mainresult1} only holds using our notion of weak equivalence.
\end{example}

We use the construction of $c(X)$ to prove the converse of Lemma \ref{weakequivalencelemma}.

\begin{lemma}\label{uniquenesslemma}
Let $p_1:E_1\to X$ and $p_2:E_2\to X$ be maps with the continuous path-covering property. Then the following are equivalent:
\begin{enumerate}
\item $p_1$ and $p_2$ are weakly equivalent,
\item for every $e_1\in E_1$, we have $(p_1)_{\#}(\pi_1(E_1,e_1))=(p_2)_{\#}(\pi_1(E_2,e_2))$ for some $e_2\in E_2$. 
\item for all $e_1\in E_1$ and $e_2\in E_2$ with $p_1(e_1)=x_0=p_2(e_2)$, the subgroups $(p_1)_{\#}(\pi_1(E_1,e_1))$ and $(p_2)_{\#}(\pi_1(E_2,e_2))$ are conjugate in $\pionex$.
\end{enumerate}
Moreover, if $ev_1:P(E_1,e_1)\to E_1$ and $ev_1:P(E_2,e_2)\to E_2$ are quotient, then ``weak equivalence" may be replaced by ``equivalence."
\end{lemma}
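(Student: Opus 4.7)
The plan is to prove the cycle $(1)\Rightarrow (2)\Rightarrow (3)\Rightarrow (1)$. The implication $(1)\Rightarrow (2)$ is exactly Lemma~\ref{weakequivalencelemma} and requires no further work.

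For the equivalence $(2)\Leftrightarrow (3)$, I would use the standard basepoint-change principle: any path $\wt{\gamma}$ in $E_2$ from $e_2$ to $e_2'$ projecting to a loop $\gamma\in\Omega(X,x_0)$ satisfies $(p_2)_{\#}(\pi_1(E_2,e_2'))=[\gamma]^{-1}(p_2)_{\#}(\pi_1(E_2,e_2))[\gamma]$. For $(2)\Rightarrow (3)$, given arbitrary basepoints $e_1,e_2$ over $x_0$, the witness $e_2'$ provided by $(2)$ can be joined to $e_2$ by a path in the path-connected space $E_2$ projecting to a loop at $x_0$, which exhibits the required conjugation. Conversely, for $(3)\Rightarrow (2)$, I would start with a conjugating element $[\gamma]\in\pi_1(X,x_0)$ and lift $\gamma$ to $E_2$ starting at $e_2$ using the continuous path-covering property of $p_2$; taking $e_2'$ to be the endpoint of the lift then makes the subgroups equal.

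The substantive work is in $(3)\Rightarrow (1)$. After using $(3)\Rightarrow (2)$ to replace $e_2$ by some $e_2'\in p_2^{-1}(x_0)$ so that $(p_1)_{\#}(\pi_1(E_1,e_1))=(p_2)_{\#}(\pi_1(E_2,e_2'))$, I would apply Lemma~\ref{coreflectionlemma} to form the coreflections $p_i^c:c(E_i)\to X$, together with the bijective weak homotopy equivalences $id_i:c(E_i)\to E_i$ satisfying $p_i\circ id_i=p_i^c$. By Lemma~\ref{compositionlemma}, each $p_i^c$ still has the continuous path-covering property, and the image subgroups on $\pi_1$ agree with those of $p_i$ since $(id_i)_{\#}$ is an isomorphism. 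Because $ev_1:P(c(E_i),\ast)\to c(E_i)$ is quotient, Corollary~\ref{cpcequivalencetheorem} produces a homeomorphism $h:(c(E_1),e_1)\to (c(E_2),e_2')$ with $p_2^c\circ h=p_1^c$. Setting $E_3=c(E_1)$, $p_3=p_1^c$, $f_1=id_1$, and $f_2=id_2\circ h$ then yields a \emph{single} simple weak equivalence witnessing $p_1\sim_s p_2$.

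The main obstacle I foresee is bookkeeping rather than conceptual depth: one must check carefully that the relations $p_3=p_1\circ f_1=p_2\circ f_2$ hold over $X$, using that $id_1,id_2$, and (hence) $h$ share underlying identity functions on the point sets. The \emph{moreover} clause then follows at once: if $ev_1:P(E_i,e_i)\to E_i$ is already quotient for $i\in\{1,2\}$, then Lemma~\ref{coreflectionlemma} forces $c(E_i)=E_i$ as topological spaces, so $f_1$ and $f_2$ constructed above are both homeomorphisms and $f_2\circ f_1^{-1}:E_1\to E_2$ is the desired equivalence in the sense of Definition~5.1.
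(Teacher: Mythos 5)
Your proposal is correct and follows essentially the same route as the paper: (1) $\Rightarrow$ (2) via Lemma \ref{weakequivalencelemma}, (2) $\Leftrightarrow$ (3) by standard basepoint-change, and (2) $\Rightarrow$ (1) by passing to the coreflections $c(E_i)$ of Lemma \ref{coreflectionlemma}, invoking Lemma \ref{compositionlemma} and Corollary \ref{cpcequivalencetheorem} to obtain the homeomorphism $h$, and assembling a single simple weak equivalence. The treatment of the ``moreover'' clause likewise matches the paper's observation that $ev_1$ quotient forces $c(E_i)=E_i$, making the identity functions genuine homeomorphisms.
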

\begin{proof}
(1) $\Rightarrow$ (2) is Lemma \ref{weakequivalencelemma} and (2) $\Leftrightarrow$ (3) follows from the standard covering space theory arguments. To prove (2) $\Rightarrow$ (1), suppose that $(p_1)_{\#}(\pi_1(E_1,e_1))=(p_2)_{\#}(\pi_1(E_2,e_2))$ for some $e_1\in E_1$ and $e_2\in E_2$. Let $c(E_1)$ and $c(E_2)$ be the spaces constructed as in Lemma \ref{coreflectionlemma} so that the continuous identity functions $f_1:c(E_1)\to E_1$ and $f_2:c(E_2)\to E_2$ have the continuous path-covering property and are weak topological homotopy equivalences. For $i\in \{1,2\}$, let $q_i:c(E_i)\to X$ be the map $q_i=p_i\circ f_i$. Since $q_i$ is the composition of maps with the continuous path-covering property, $q_i$ also has the continuous path-covering property (recall Lemma \ref{compositionlemma}). Moreover, since $f_1$ and $f_2$ are weak homotopy equivalences, we have $(q_1)_{\#}(\pi_1(c(E_1),e_1))=(q_2)_{\#}(\pi_1(c(E_2),e_2))$. By (2) of Lemma \ref{coreflectionlemma}, $ev_1:P(c(E_i),e_i)\to c(E_i)$ is quotient for $i\in \{1,2\}$. Therefore, Lemma \ref{cpcequivalencetheorem} applies to give a homeomorphism $h:(c(E_1),e_1)\to (c(E_2),e_2)$ such that $q_2\circ h=q_1$.
\[\xymatrix{
E_1 \ar[drr]_{p_1}  & c(E_1) \ar[l]_-{f_1} \ar[dr]^-{q_1} \ar[rr]^-{h}_{\cong}  &&  c(E_2) \ar[r]^-{f_2} \ar[dl]_-{q_2}  & E_2 \ar[dll]^-{p_2} \\
&& X
}\]
For the final statement of the lemma, suppose $ev_1:P(E_i,e_i)\to E_i$ is quotient for $i\in\{1,2\}$. Then $f_1$ and $f_2$ are true identity maps and thus homeomorphisms. It follows that $p_1$ and $p_2$ are equivalent.
\end{proof}
The previous lemma settles uniqueness claims in Theorem \ref{mainresult1}. We now focus on existence. Fix based space $(X,x_0)$, a subgroup $H\leq \pionex$, and let $\tXh=P(X,x_0)/\mathord{\sim}$ be the quotient space where $\alpha\sim \beta$ if and only if $\alpha(1)=\beta(1)$ and $[\alpha\cdot\beta^{-}]\in H$. Let $H[\alpha]$ denote the equivalence class of $\alpha\in P(X,x_0)$ and $q_H:P(X,x_0)\to \tXh$, $q_H(\alpha)=H[\alpha]$ be the quotient map. We write $\txh$ to represent $H[c_{x_0}]$, which we take to be the basepoint of $\tXh$. Let $p_H:\tXh\to X$, $p_H(H[\alpha])=\alpha(1)$ be the endpoint evaluation map.

\begin{lemma}\label{retractionlemma}
For any path-connected space $(X,x_0)$,
\begin{enumerate}
\item $ev_1:P(\tXh,\txh)\to \tXh$ is a quotient map,
\item $P(p_H):P(\tXh,\txh)\to P(X,x_0)$ is a retraction.
\end{enumerate}
\end{lemma}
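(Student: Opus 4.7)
The plan is to construct a single continuous section $s:P(X,x_0)\to P(\tXh,\txh)$ that simultaneously witnesses both claims. Given $\alpha\in P(X,x_0)$ and $t\in I$, let $\alpha_t\in P(X,x_0)$ denote the ``prefix path" defined by $\alpha_t(r)=\alpha(tr)$, and set
\[
s(\alpha)(t) \;=\; q_H(\alpha_t) \;=\; H[\alpha_t].
\]
Since $\alpha_0=c_{x_0}$ and $\alpha_1=\alpha$, this defines a path in $\tXh$ from $\txh$ to $H[\alpha]$, and one has $p_H(s(\alpha)(t))=\alpha_t(1)=\alpha(t)$, i.e.\ $P(p_H)\circ s = \mathrm{id}_{P(X,x_0)}$.

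The continuity of $s$ is checked using the exponential law for the locally compact Hausdorff space $I$. The reparameterization map $\mu:P(X,x_0)\times I\to P(X,x_0)$, $\mu(\alpha,t)=\alpha_t$, is continuous because its adjoint $P(X,x_0)\times I\times I\to X$, $(\alpha,t,r)\mapsto \alpha(tr)$, factors as evaluation after $(\alpha,t,r)\mapsto(\alpha,tr)$, both continuous. Composing with $q_H$ gives a continuous map $P(X,x_0)\times I\to \tXh$, whose adjoint is $s$. This establishes (2): $P(p_H)$ admits the continuous section $s$, hence is a retraction.

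For (1), observe the identity
\[
ev_1\circ s \;=\; q_H,
\]
since $ev_1(s(\alpha))=s(\alpha)(1)=H[\alpha_1]=H[\alpha]=q_H(\alpha)$. Surjectivity of $ev_1:P(\tXh,\txh)\to\tXh$ is immediate from the existence of $s$ (the path $s(\alpha)$ ends at $H[\alpha]$). To see that $ev_1$ is quotient, suppose $A\subseteq\tXh$ satisfies $ev_1^{-1}(A)$ open. Then by continuity of $s$, the set $s^{-1}(ev_1^{-1}(A))=(ev_1\circ s)^{-1}(A)=q_H^{-1}(A)$ is open in $P(X,x_0)$; since $q_H$ is a quotient map by construction, $A$ is open in $\tXh$. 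Combined with the continuity of $ev_1$, this shows $ev_1$ is a quotient map.

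The construction is essentially formal; the only subtle point is the continuity of the prefix reparameterization $\mu$, which relies crucially on $I$ being locally compact Hausdorff so that the compact-open topology plays well with the exponential adjunction. Once $s$ is in hand, both (1) and (2) fall out cleanly, and the diagram $ev_1\circ s=q_H$ is what transfers the universal property of $q_H$ to $ev_1$.
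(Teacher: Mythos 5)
Your proof is correct and is essentially the paper's argument: the section $s(\alpha)(t)=H[\alpha_t]$ is exactly the composite $P(q_H)\circ\mathscr{S}$ that the paper uses, where $\mathscr{S}(\alpha)(t)(s)=\alpha(st)$, and both proofs deduce (1) from the fact that $ev_1$ is the right-hand factor of a quotient map (your identity $ev_1\circ s=q_H$ versus the paper's $ev_1\circ P(q_H)=q_H\circ ev_1$). You have merely collapsed the paper's three-column diagram into a single section and verified its continuity directly via the exponential law rather than by functoriality of $P(-)$; the content is the same.
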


\begin{proof}
Consider the following commutative diagram.
\[\xymatrix{
P(P(X,x_0),c_{x_0}) \ar@/^2pc/[rr]^-{P(ev_1)} \ar[d]_-{ev_1} \ar[r]^-{P(q_H)} &  P(\tXh,\txh) \ar[d]_-{ev_1} \ar[r]^-{P(p_H)} & P(X,x_0) \ar[d]^-{ev_1}\\
P(X,x_0) \ar@/_2pc/[rr]_-{ev_1} \ar[r]_-{q_H} & \tXh \ar[r]_-{p_H} & X
}\]
Since $P(X,x_0)$ is contractible in a canonical way, we may define a map $\mathscr{S}:P(X,x_0)\to P(P(X,x_0),c_{x_0})$ by setting $\mathscr{S}(\alpha)(t)(s)=\alpha(st)$.
\begin{itemize}
\item For the top map $P(ev_1)$, notice that $P(ev_1)(\beta)=ev_1\circ\beta\in P(X,x_0)$ and thus $(P(ev_1)(\beta))(t)=\beta(t)(1)$. Therefore, \[P(ev_1)(\mathscr{S}(\alpha))(t)=\mathscr{S}(\alpha)(t)(1)=\alpha(t)\]
for all $t\in I$, giving $P(ev_1)\circ \mathscr{S}=id_{P(X,x_0)}$.
\item For the left vertical map $ev_1:P(P(X,x_0),c_{x_0})\to \pxxo$, we have $\mathscr{S}(\alpha)(1)=\alpha$ and thus $ev_1\circ \mathscr{S}=id_{P(X,x_0)}$.
\end{itemize}
Hence, $\scrs$ is a section to both the top map $P(ev_1)$ and left vertical map $ev_1$. In particular, both maps are quotient. In the left square, the composition $q_H\circ ev_1$ is quotient. It follows that the middle vertical map $ev_1:P(\tXh,\txh)\to \tXh$ is quotient, proving (1). In the top triangle, we have $P(p_H)\circ (P(q_H)\circ \mathscr{S})=P(ev_1)\circ \mathscr{S}=id_{P(X,x_0)}$ and thus $P(q_H)\circ \mathscr{S}$ is a section to $P(p_H)$, proving (2).
\end{proof}

\begin{remark}
The section $P(q_H)\circ \mathscr{S}$ in the proof of Lemma \ref{retractionlemma} guarantees that for given $H\leq \pionex$, every path $\alpha\in P(X,x_0)$ admits a canonical lift $P(q_H)\circ \mathscr{S}(\alpha)=\wt{\alpha}_{H}:I\to \tXh$ of $\alpha$ called the \textit{standard lift} and defined by $\wt{\alpha}_{H}(t)=H[\alpha_t]$ where $\alpha_t(s)=\alpha(st)$ is the linear reparameterization of $\alpha|_{[0,t]}$ and $\alpha_0=c_{x_0}$.
\end{remark}

\begin{proposition}
The endpoint projection $p_H:\tXh\to X$ is a quotient map if and only if $ev_1:P(X,x_0)\to X$ is quotient.
\end{proposition}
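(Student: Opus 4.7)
The plan is to exploit the factorization $ev_1 = p_H \circ q_H$ coming directly from the definitions: indeed, $p_H(q_H(\alpha)) = p_H(H[\alpha]) = \alpha(1) = ev_1(\alpha)$. The quotient map $q_H : P(X,x_0) \to \tXh$ is quotient (and surjective) by construction of $\tXh$.

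For the forward direction, I would assume $p_H$ is a quotient map and observe that $ev_1 = p_H \circ q_H$ is then a composition of two quotient maps, which is quotient.

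For the converse, I would assume $ev_1$ is quotient and invoke the standard fact that if $g \circ f$ is a quotient map and $f$ is a (continuous) surjection, then $g$ is a quotient map. Applied with $f = q_H$ and $g = p_H$, this yields that $p_H$ is quotient. Concretely, for $U \subseteq X$: if $p_H^{-1}(U)$ is open in $\tXh$, then $q_H^{-1}(p_H^{-1}(U)) = ev_1^{-1}(U)$ is open in $P(X,x_0)$ by continuity of $q_H$, so $U$ is open in $X$ since $ev_1$ is quotient; conversely, openness of $U$ in $X$ gives openness of $p_H^{-1}(U)$ by continuity of $p_H$.

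There is no real obstacle here; the entire argument is the elementary categorical observation about quotient maps applied to the canonical factorization $ev_1 = p_H \circ q_H$ together with the fact (already established in the construction of $\tXh$) that $q_H$ is a quotient map.
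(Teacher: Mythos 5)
Your proposal is correct and is exactly the paper's argument: the paper likewise factors $ev_1=p_H\circ q_H$ and appeals to the basic properties of quotient maps, which you have simply spelled out in detail.
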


\begin{proof}
Since $ev_1=p_H\circ q_H$ as maps $\pxxo\to X$ where $q_H$ is quotient, the conclusion follows from basic properties of quotient maps.
\end{proof}

For a given path $\alpha\in P(X,x_0)$, consider each of the following pullbacks (with the respective subspace topology):
\begin{itemize}
\item $G_{\alpha}=\{(x,t)\in X\times I\mid \alpha(t)=x\}$ is the graph of $\alpha$,
\item $E_{\alpha,H}=\{(H[\beta],t)\in\tXh\times I\mid \beta(1)=\alpha(t)\}$,
\item $P_{\alpha}=\{(\beta,t)\in \pxxo\times I\mid \beta(1)=\alpha(t)\}$.
\end{itemize}
Recalling that $\alpha_t(s)=\alpha(st)$ for $t\in I$, define $\phi_{\alpha}:P_{\alpha}\to \Omega(X,x_0)$ by $\phi_{\alpha}(\beta,t)=\beta\cdot\alpha_{t}^{-}$. Since $t\mapsto \alpha_t$ defines a path in $P(X,x_0)$ and concatenation $\{(\gamma,\delta)\in P(X,x_0)^2\mid \gamma(1)=\delta(1)\}\to \Omega(X,x_0)$, $(\gamma,\delta)\mapsto \gamma\cdot\delta^{-}$ is continuous, $\phi_{\alpha}$ is continuous.

\begin{lemma}\label{psicontinuitylemma}
If a path $\alpha\in\pxxo$ has closed graph $G_{\alpha}\subseteq X\times I$, then the function $\psi_{\alpha,H}:E_{\alpha,H}\to \pionex/H$, $\psi_{\alpha,H}(H[\beta],t)=H[\beta\cdot\alpha_{t}^{-}]$ is continuous.
\end{lemma}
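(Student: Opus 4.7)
The plan is to exhibit $\psi_{\alpha,H}$ as the map induced, via a suitable quotient map $P_{\alpha}\twoheadrightarrow E_{\alpha,H}$, by the continuous composition $\pi\circ\phi_{\alpha}:P_{\alpha}\to\pionex/H$, where $\pi:\Omega(X,x_0)\to\pionex/H$ is the canonical quotient. Concretely, I would work with the commutative square
\[\xymatrix{
P_{\alpha} \ar[r]^-{\phi_{\alpha}} \ar[d]_-{(q_H\times id_I)|_{P_\alpha}} & \Omega(X,x_0) \ar[d]^-{\pi} \\
E_{\alpha,H} \ar[r]_-{\psi_{\alpha,H}} & \pionex/H
}\]
whose top row is continuous (as $\phi_{\alpha}$ was verified continuous in the preceding paragraph and $\pi$ is a composition of quotient maps). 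Commutativity is routine from the definitions: $(q_H\times id_I)$ sends $(\beta,t)\mapsto (H[\beta],t)$ and $\pi\circ\phi_{\alpha}$ sends $(\beta,t)\mapsto H[\beta\cdot\alpha_{t}^{-}]$, matching $\psi_{\alpha,H}\circ (q_H\times id_I)$. It therefore suffices to show the left vertical map is a topological quotient map, for then the universal property will force $\psi_{\alpha,H}$ to be continuous.

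To establish that $(q_H\times id_I)|_{P_\alpha}$ is quotient, I would proceed in three steps. First, $q_H:P(X,x_0)\to\tXh$ is a quotient map, and since $I$ is locally compact Hausdorff, Whitehead's theorem gives that the unrestricted product $q_H\times id_I:P(X,x_0)\times I\to\tXh\times I$ is again quotient. Second, $P_{\alpha}$ is saturated with respect to this quotient: if $\beta\sim_H\beta'$ then $\beta(1)=\beta'(1)$, so $(\beta,t)\in P_{\alpha}\Leftrightarrow(\beta',t)\in P_{\alpha}$, which gives $P_{\alpha}=(q_H\times id_I)^{-1}(E_{\alpha,H})$. Third, the hypothesis that $G_{\alpha}\subseteq X\times I$ is closed is what unlocks the argument: since $P_{\alpha}=(ev_1\times id_I)^{-1}(G_{\alpha})$ is the preimage of $G_\alpha$ under a continuous map, $P_{\alpha}$ is closed in $P(X,x_0)\times I$. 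Being closed and saturated, its image $E_{\alpha,H}$ is closed in $\tXh\times I$, and the standard fact that a quotient map restricts to a quotient map on the preimage of a closed subset yields that $(q_H\times id_I)|_{P_\alpha}:P_{\alpha}\to E_{\alpha,H}$ is quotient.

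The main potential obstacle is precisely the step that uses the closed-graph hypothesis. Without it, $P_{\alpha}$ need not be closed, $E_{\alpha,H}$ need not be closed in $\tXh\times I$, and the quotient structure on $P(X,x_0)\times I\to\tXh\times I$ need not descend by restriction; closedness of $G_\alpha$ is the ingredient that makes all three pieces line up. Once the descent to a quotient on $P_\alpha$ is secured, continuity of $\psi_{\alpha,H}$ is an immediate consequence of the universal property of quotient maps applied to the diagram above.
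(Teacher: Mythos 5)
Your proposal is correct and follows essentially the same route as the paper: Whitehead's theorem makes $q_H\times id_I$ quotient, the closed-graph hypothesis makes $E_{\alpha,H}$ closed so that the restriction to $P_{\alpha}=(q_H\times id_I)^{-1}(E_{\alpha,H})$ is still quotient, and the universal property then yields continuity of $\psi_{\alpha,H}$. The only cosmetic difference is that the paper gets closedness of $E_{\alpha,H}$ directly as $(p_H\times id)^{-1}(G_{\alpha})$ rather than via the image of the saturated closed set $P_{\alpha}$, and it also records the (routine) well-definedness of $\psi_{\alpha,H}$, which you should state explicitly before invoking commutativity of the square.
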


\begin{proof}
First, we observe that $\psi_{\alpha,H}$ is well-defined. Indeed, if $(H[\beta],t)=(H[\gamma],t)$, then $[\beta\cdot\alpha_{t}^{-}][\alpha_{t}\cdot\gamma^{-}]=[\beta\cdot\gamma^{-}]\in H$ and thus $H[\beta\cdot\alpha_{t}^{-}]=H[\gamma\cdot\alpha_{t}^{-}]$.

Let $q_H:\pxxo\to \tXh$ and $\pi_H:\Omega(X,x_0)\to \pionex/H$ denote the canonical quotient maps. Since $I$ is locally compact Hausdorff, the product map $q_H\times id:\pxxo\times I\to \tXh\times I$ is a quotient map \cite{whiteheadquotient}. Since the graph $G_{\alpha}$ is assumed to be closed, the set $E_{\alpha,H}=(p_H\times id)^{-1}(G_{\alpha})$ is closed in $\tXh\times I$. We have $P_{\alpha}=(q_H\times id)^{-1}(E_{\alpha,H})$. Therefore, the restriction $(q_H\times id)|_{P_{\alpha}}:P_{\alpha}\to E_{\alpha,H}$ is a quotient map. Since $\psi_{\alpha,H}\circ (q_H\times id)|_{P_{\alpha}}=\pi_H\circ \phi_{\alpha}$ is continuous and $(q_H\times id)|_{P_{\alpha}}$ is quotient, $\psi_{\alpha,H}$ is continuous.
\[\xymatrix{
\pxxo\times I \ar[d]_-{q_H\times id} & P_{\alpha } \ar@{_{(}->}[l] \ar[d]_{(q_H\times id)|_{P_{\alpha}}} \ar[rr]^-{\phi_{\alpha}} && \Omega(X,x_0) \ar[d]^{\pi_H}\\
\tXh\times I \ar[d]_-{p_H\times id} & E_{\alpha,H} \ar[d] \ar@{_{(}->}[l]  \ar[rr]^-{\psi_{\alpha,H}} && \pionex/H \\
X\times I & G_{\alpha} \ar@{_{(}->}[l]
}\]
\end{proof}

\begin{theorem}\label{tpdtheorem}
If the graph $G_{\alpha}$ of every path $\alpha\in\pxxo$ is closed in $X\times I$ (e.g. if $X$ is Hausdorff) and $\pionex/H$ is totally path disconnected, then $p_H:\tXh\to X$ has the continuous path-covering property and $(p_H)_{\#}(\pi_1(\tXh,\txh))=H$.
\end{theorem}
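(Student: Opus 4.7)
The strategy is to establish the continuous path-covering property first at the basepoint $\txh$, then transfer it to every other basepoint via path concatenation; the identification $(p_H)_\#(\pi_1(\tXh,\txh))=H$ will fall out of unique path lifting.

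At $\txh$, Lemma \ref{retractionlemma} already supplies the candidate inverse for $P(p_H)$, namely the standard lift map $\mathscr{L}:=P(q_H)\circ\mathscr{S}\colon P(X,x_0)\to P(\tXh,\txh)$ that sends $\alpha$ to $t\mapsto H[\alpha_t]$, and this is a continuous section of $P(p_H)$. It therefore suffices to prove that every lift $\wt\alpha\in P(\tXh,\txh)$ of $\alpha\in P(X,x_0)$ equals $\mathscr{L}(\alpha)$. Choose representatives $\beta_t\in P(X,x_0)$ with $\wt\alpha(t)=H[\beta_t]$; then $t\mapsto (H[\beta_t],t)$ is a continuous map $I\to E_{\alpha,H}$, and post-composing with the continuous map $\psi_{\alpha,H}$ of Lemma \ref{psicontinuitylemma} (whose hypothesis holds because every path in $P(X,x_0)$ has closed graph) produces a continuous function
\[
f\colon I\to\pionex/H,\qquad f(t)=H[\beta_t\cdot\alpha_t^{-}].
\]
Since $I$ is path-connected and $\pionex/H$ is totally path-disconnected, $f$ is constant. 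Evaluating at $t=0$, where $\alpha_0=c_{x_0}$ and $[\beta_0]\in H$ by the boundary condition $\wt\alpha(0)=\txh$, yields $f(0)=H$, so $[\beta_t\cdot\alpha_t^{-}]\in H$ for all $t$ and hence $H[\beta_t]=H[\alpha_t]$, i.e., $\wt\alpha=\mathscr{L}(\alpha)$.

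For a general basepoint $e=H[\gamma]\in\tXh$, any lift $\wt\alpha\in P(\tXh,e)$ of $\alpha\in P(X,\gamma(1))$ concatenates with the standard lift of $\gamma$ to give a lift $\wt\gamma_H\cdot\wt\alpha$ of $\gamma\cdot\alpha\in P(X,x_0)$ based at $\txh$; by the previous paragraph this must equal $\mathscr{L}(\gamma\cdot\alpha)$. Reading off its second half gives the formula $\wt\alpha(t)=H[(\gamma\cdot\alpha)_{(1+t)/2}]$, which exhibits the inverse of $P(p_H)\colon P(\tXh,e)\to P(X,\gamma(1))$ as the composition of left concatenation with $\gamma$, the continuous map $\mathscr{L}$, and precomposition with the continuous reparameterization $t\mapsto(1+t)/2$. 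Thus $P(p_H)$ is a homeomorphism at every basepoint, proving the continuous path-covering property. Finally, a class $[\alpha]\in\pionex$ lies in $(p_H)_\#(\pi_1(\tXh,\txh))$ iff the standard lift $\mathscr{L}(\alpha)$ is a loop at $\txh$, iff $H[\alpha]=H[c_{x_0}]$, iff $[\alpha]\in H$, so $(p_H)_\#(\pi_1(\tXh,\txh))=H$.

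The crux is the uniqueness step at $\txh$: Lemma \ref{psicontinuitylemma}, which depends on the closed-graph hypothesis, converts an arbitrary lift into a continuous coset-valued function, while the totally path-disconnected hypothesis on $\pionex/H$ rigidifies that function to be constant. Both hypotheses are essential, and the remainder of the proof is formal concatenation bookkeeping.
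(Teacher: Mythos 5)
Your proof is correct and follows essentially the same route as the paper's: Lemma \ref{retractionlemma} supplies the continuous section realizing the standard lift, and Lemma \ref{psicontinuitylemma} together with total path-disconnectedness of $\pionex/H$ forces any lift based at $\txh$ to agree with it. The only difference is that you spell out the passage to an arbitrary basepoint $H[\gamma]$ via concatenation with the standard lift of $\gamma$, a step the paper leaves implicit; this is a welcome bit of extra care rather than a different argument.
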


\begin{proof}
Suppose $X$ satisfies the hypotheses of the theorem. Recall that $P(p_H):P(\tXh,\txh)\to P(X,x_0)$ is a topological retraction by Lemma \ref{retractionlemma}. Therefore, it is enough to show that $P(p_H)$ is injective, i.e. that $p_H:\tXh\to X$ has the unique path-lifting property. Fix a path $\alpha\in\pxxo$. It suffices to show that the standard lift $\wt{\alpha}_H\in P( \tXh,\txh)$ given by $\wt{\alpha}_{H}(t)=H[\alpha_t]$ is the \textit{only} lift of $\alpha$ starting at $\txh$.

Suppose, to obtain a contradiction, that $\wt{\beta}:I\to \tXh$ is a lift of $\alpha$ starting at $\txh$ such that $\wt{\beta}\neq \wt{\alpha}_H$. By restricting the domain if necessary, we may assume that $\wt{\beta}(1)\neq \wt{\alpha}_H(1)$. Write $\wt{\beta}(t)=H[\beta_t]$ for paths $\beta_t:(I,0,1)\to (X,x_0,\alpha(t))$. From the definition of the standard lift, $\wt{\beta}(1)\neq \wt{\alpha}_H(1)$ implies that $[\beta_1\cdot\alpha^{-}]\notin H$. Since $p_H\circ\wt{\beta}(t)=\beta_t(1)=\alpha(t)$ for all $t\in I$, there is a well-defined path $\wt{\beta}':I\to E_{\alpha,H}$ given by $\wt{\beta}'(t)=(\wt{\beta}(t),t)$. By Lemma \ref{psicontinuitylemma}, $\psi_{\alpha,H}:E_{\alpha,H}\to\pionex/H$ is continuous. Therefore, $\psi_{\alpha,H}\circ\wt{\beta}':I\to\pionex/H$ is a continuous path from $\psi_{\alpha,H}\circ\wt{\beta}'(0)=\psi_{\alpha,H}(\txh,0)=H[c_{x_0}\cdot\alpha_{0}^{-}]=\txh$ to $\psi_{\alpha,H}\circ\wt{\beta}'(1)=\psi_{\alpha,H}(H[\beta_1],1)=H[\beta_1\cdot\alpha^{-}]$. However, $H[\beta_1\cdot\alpha^{-}]\neq \txh$, showing that $\psi_{\alpha,H}\circ\wt{\beta}'$ is a non-constant path in $\pionex/H$; a contradiction of the assumption that $\pionex/H$ is totally path disconnected. We conclude that $p_H$ has the continuous path-covering property.

Finally, note that if $\alpha\in \Omega (X,x_0)$, then the standard lift $\wt{\alpha}_H$ is a loop $\Leftrightarrow$ $\wt{\alpha}_H(1)=\txh$ $\Leftrightarrow$ $H[\alpha]=H[c_{x_0}]$ $\Leftrightarrow$ $[\alpha]\in H$. It follows from (2) of Lemma \ref{basicliftingpropertieslemma} that $(p_H)_{\#}(\pi_1(\tXh,\txh))=H$.
\end{proof}

\begin{proof}[Proof of Theorem \ref{mainresult1}]
We note that the uniqueness (up to weak equivalence) conditions in Theorem \ref{mainresult1} are guaranteed by Lemma \ref{uniquenesslemma}. If there exists a map $p:(E,e)\to (X,p(e))$ with the continuous path-covering property such that $H=p_{\#}(\pi_1(E,e))$, then $\pionex/H$ is totally path-disconnected by Corollary \ref{tpdcorollary}. Conversely, if $\pionex/H$ is totally path-disconnected, then $p_H:\tXh\to X$ has the continuous path-covering property and satisfies $(p_H)_{\#}(\pi_1(\tXh,\txh))=H$ by Theorem \ref{tpdtheorem}. This proves the main statement of Theorem \ref{mainresult1}.

Part (1) follows by combining the main statement with Corollary \ref{cpcequivalencetheorem}. For Part (2), recall that $ev_1:P(\tXh,\txh)\to \tXh$ is quotient by (1) of Lemma \ref{retractionlemma}. Since every weak equivalence class of maps $E\to X$ with the continuous path-covering property is represented by a map of the form $p_H$, Part (2) follows.
\end{proof}
The existence portion of Theorem \ref{mainresult1} indicates that maps $p:E\to X$ with the continuous path-covering property where $E$ is path-connected exist very often. The next corollary is the case $H=1$ of Theorem \ref{mainresult1}.
\begin{corollary}\label{univcovcorollary}
If $X$ is Hausdorff, then there exists a smiply connected space $E$ and map $p:E\to X$ with the continuous path-covering property if and only if $\pionex$ is totally path-disconnected.
\end{corollary}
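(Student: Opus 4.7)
The plan is to obtain this corollary as the special case $H = 1$ of Theorem \ref{mainresult1}, handling the two directions separately and using the injectivity of $p_{\#}$ on $\pi_1$ to convert the statement ``$p_{\#}(\pi_1(E,e)) = 1$'' into ``$E$ is simply connected.''

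For the forward direction, suppose $p : E \to X$ has the continuous path-covering property with $E$ simply connected, and pick any $e \in E$ with $p(e) = x_0$. Then $H := p_{\#}(\pi_1(E,e)) = 1$, so by Corollary \ref{tpdcorollary} the coset space $\pionex/H = \pionex$ is totally path-disconnected, as required.

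For the reverse direction, assume $\pionex$ is totally path-disconnected. Take $H = 1 \leq \pionex$, so that $\pionex/H = \pionex$ is totally path-disconnected. Since $X$ is Hausdorff, the graph $G_{\alpha}$ of every path $\alpha \in P(X,x_0)$ is closed in $X \times I$, so Theorem \ref{tpdtheorem} applies: the map $p_H : \tXh \to X$ has the continuous path-covering property and $(p_H)_{\#}(\pi_1(\tXh, \txh)) = 1$. By Theorem \ref{fibrationtheorem}, $p_H$ is in particular a Serre fibration with totally path-disconnected fibers, so by Lemma \ref{basicliftingpropertieslemma} the induced homomorphism $(p_H)_{\#} : \pi_1(\tXh,\txh) \to \pionex$ is injective. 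Combined with the equality $(p_H)_{\#}(\pi_1(\tXh,\txh)) = 1$, this forces $\pi_1(\tXh,\txh) = 1$, so $E := \tXh$ is a simply connected space with the desired map $p_H : E \to X$.

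There is essentially no obstacle here beyond correctly assembling the pieces: the existence half uses the explicit construction $\tXh$ from Section \ref{sectionclassification}, and the ``simply connected'' conclusion (rather than merely ``$p_{\#}(\pi_1)$ is trivial'') is a free consequence of the injectivity result in Lemma \ref{basicliftingpropertieslemma}. The Hausdorff hypothesis on $X$ enters exactly to guarantee that the graphs of paths are closed, which is the standing hypothesis of Theorem \ref{tpdtheorem}.
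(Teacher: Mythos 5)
Your proof is correct and follows essentially the same route as the paper, which simply records this corollary as the case $H=1$ of Theorem \ref{mainresult1} (whose proof rests on exactly the two ingredients you invoke: Corollary \ref{tpdcorollary} for necessity and Theorem \ref{tpdtheorem} for existence). Your explicit observation that injectivity of $p_{\#}$ upgrades ``$p_{\#}(\pi_1(E,e))=1$'' to ``$E$ simply connected'' is a correct and worthwhile detail that the paper leaves implicit.
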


\begin{example}
Consider the canonical homomorphism $\Psi:\pionex\to \check{\pi}_1(X,x_0)$ to the first shape homotopy group (see \cite[Section 3]{FZ07}). The first shape homotopy group $\check{\pi}_1(X,x_0)$ is naturally an inverse limit of discrete groups and with the inverse limit topology on $\check{\pi}_1(X,x_0)$, the natural map $\Psi$ is continuous \cite[p. 79]{BFqtpfg}. If $\Psi$ is injective, then $\pionex$ continuously injects into an inverse limit of discrete spaces and is therefore totally path disconnected. By Corollary \ref{univcovcorollary}, $X$ must admit a simply connected space $E$ and map $p:E\to X$ with the continuous path-covering property. Spaces for which $\Psi$ is injective include, but are not limited to, all one-dimensional spaces \cite{CC06,Eda98}, planar sets \cite{FZ05}, and certain trees of manifolds \cite{FG05}.
\end{example}
\section{A remark on topological structure}\label{sectionstructure}
As noted in the introduction, it is not possible to characterize fibrations with unique path-lifting or other maps defined abstractly in terms of unique lifting properties \textit{up to homeomorphism} using the (topologized) fundamental group. However, there are many situations, where one can choose a highly structure representative map $p:E\to X$ from a given weak equivalence class. In this section, we focus on the locally path-connected case. To simplify our terminology, we say that a based map $p:(E,e_0)\to (X,x_0)$ with the continuous path-covering property \textit{corresponds} to a subgroup $H\leq \pionex$ if $p_{\#}(\pi_1(E,e_0))=H$. In what follows we will implicitly use the fact from Theorem \ref{mainresult1} that maps $p_1:E_1\to X$ and $p_2:E_2\to X$ with the continuous path-covering property are weakly equivalent if and only if for every $e_1\in E_1$, there exists $e_2\in E_2$ such that $p_1(e_1)=p_2(e_2)$ and $(p_1)_{\#}(\pi_1(E_1,e_1))=(p_2)_{\#}(\pi_1(E_2,e_2))$.

Inverse limits of path-connected spaces are not always path connected. Since we only wish to consider non-empty, path-connected domains, the next definition will simplify the exposition to follow.

\begin{definition}\label{inverselimitsdef}
Fix a class $\mathscr{C}$ of maps with the continuous path-covering property. Suppose $J$ is a directed set, $p_j\in\mathscr{C}$ for all $j\in J$, and $f_{j,j'}:E_j\to E_j'$ are maps satisfying $p_j\circ f_{j,j'}=p_{j'}$ whenever $j\geq j'$ in $J$. Suppose $E$ is a non-empty path component of the inverse limit $\varprojlim_{j}(E_j,f_{f,f'})$ and let $p:E\to X$ be the restriction of $\varprojlim_{j}p_j:\varprojlim_{j}(E_j,f_{f,f'})\to X$. We refer to the map $p$ as \textit{an inverse limit of maps of type }$\mathscr{C}$. For example, the term \textit{inverse limit of covering projections} will refer to maps of the form $p$ where each $p_j$ is a covering projection.
\end{definition}

It is straightforward to see that the class of maps with the continuous path-covering property are closed under inverse limits in the above sense. Moreover, the following lemma requires a direct argument involving the universal property of inverse limits; see the proof of \cite[Lemma 2.31]{Brazcat} for details.

\begin{lemma}\label{invlimitlemma}
Suppose $J$ is a directed set, $p_j:E_j\to X$ is a map with the continuous path-covering property for every $j\in J$, and $f_{j,j'}:E_j\to E_j'$ are maps satisfying $p_{j'}\circ f_{j,j'}=p_{j}$ whenever $j\geq j'$. Let $(e_j)_{j\in J}\in \varprojlim_{j}(E_j,f_{j,j'})$, $E$ be the path component of $(e_j)_{j\in J}$, and $p:E\to X$ be the restriction of $\varprojlim_{j}p_j:\varprojlim_{j}E_j\to X$. Then $p$ has the continuous path-covering property and corresponds to the subgroup $p_{\#}(\pi_1(E,(e_j)))=\bigcap_{j\in J}(p_j)_{\#}(\pi_1(E_j,e_j))$.
\end{lemma}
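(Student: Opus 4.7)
The plan has two parts: establishing the continuous path-covering property and identifying the corresponding subgroup as the intersection.

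For the first part, the key observation is that the based path-space functor commutes with inverse limits. Since $P(-, \bullet) = (-,\bullet)^{(I,0)}$ is representable (and $I$ is compact Hausdorff, so no topological pathology arises), I would argue that the canonical map $P(\varprojlim_{j} E_j, (e_j)) \to \varprojlim_{j} P(E_j, e_j)$ is a homeomorphism: a path into an inverse limit is precisely a compatible family of paths, and the compact-open topology on the left matches the inverse-limit topology on the right since both have subbases indexed by pairs $(K, U_j)$ with $K \subseteq I$ compact. Because $E$ is by definition the path component of $(e_j)$ in $\varprojlim_j E_j$, any path starting in $E$ remains in $E$, so $P(E,(e_j)) = P(\varprojlim_{j} E_j, (e_j))$; the same reasoning applies at any other basepoint $e \in E$ by walking along a path from $(e_j)$ to $e$. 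The compatibility $p_{j'} \circ f_{j,j'} = p_j$ ensures $p_j(e_j) = x_0$ is independent of $j$, and gives $P(p_{j'}) \circ P(f_{j,j'}) = P(p_j)$. Since each $P(p_j):P(E_j, e_j) \to P(X,x_0)$ is a homeomorphism by hypothesis, each bonding map $P(f_{j,j'}) = P(p_{j'})^{-1} \circ P(p_j)$ is a homeomorphism, and the inverse limit of this system of homeomorphisms to the fixed space $P(X,x_0)$ is itself a homeomorphism $\varprojlim_{j} P(p_j) : \varprojlim_{j} P(E_j,e_j) \to P(X,x_0)$. Composing with the identification above yields that $P(p):P(E,(e_j)) \to P(X,x_0)$ is a homeomorphism, which is the continuous path-covering property.

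For the second part, I would invoke Lemma \ref{basicliftingpropertieslemma}(2). A loop $\alpha \in \Omega(X,x_0)$ satisfies $[\alpha] \in p_{\#}(\pi_1(E,(e_j)))$ if and only if its unique lift $\wt{\alpha} \in P(E,(e_j))$ is a loop at $(e_j)$. Under the homeomorphism $P(E,(e_j)) \cong \varprojlim_{j} P(E_j,e_j)$ from the first part, $\wt{\alpha}$ corresponds to the compatible family $(\wt{\alpha}_j)_{j \in J}$ where $\wt{\alpha}_j \in P(E_j,e_j)$ is the unique $p_j$-lift of $\alpha$. Equality in an inverse limit is equality in every coordinate, so $\wt{\alpha}(1)=(e_j)$ if and only if $\wt{\alpha}_j(1)=e_j$ for every $j$, i.e.\ if and only if each $\wt{\alpha}_j$ is a loop, i.e.\ if and only if $[\alpha] \in (p_j)_{\#}(\pi_1(E_j,e_j))$ for all $j$. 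This yields $p_{\#}(\pi_1(E,(e_j))) = \bigcap_{j\in J}(p_j)_{\#}(\pi_1(E_j,e_j))$.

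The main obstacle I anticipate is the clean verification that $P(\varprojlim_{j} E_j, (e_j)) \to \varprojlim_{j} P(E_j, e_j)$ is a homeomorphism (and not just a continuous bijection) together with the bookkeeping around the restriction to the path component $E$. The underlying bijection is formal from the universal property, but equating the compact-open topology on the left with the subspace topology from $\prod_{j} P(E_j,e_j)$ on the right requires checking subbasic open sets; alternatively one can cite the general fact that $(-)^{(I,0)}$, being right adjoint to $(- \times I)/(-\times\{0\})$ in a suitable convenient category, preserves all small limits. Given the paper's cross-reference to \cite[Lemma 2.31]{Brazcat} this is apparently a known routine calculation, but it is the only step that is not immediate from the definitions.
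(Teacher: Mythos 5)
Your proof is correct. The paper does not actually write out a proof of this lemma---it defers to the proof of \cite[Lemma 2.31]{Brazcat}, describing it only as ``a direct argument involving the universal property of inverse limits''---and your argument is exactly of that form: the identification $P(\varprojlim_j E_j,(e_j))\cong\varprojlim_j P(E_j,e_j)$ (which follows from the facts, already used in the paper for products, that based path spaces preserve products and subspace embeddings, the inverse limit being a subspace of the product), the observation that based paths at a point of the path component $E$ land in $E$, the inversion of the compatible cone of homeomorphisms $P(p_j)$, and the coordinatewise loop criterion via Lemma \ref{basicliftingpropertieslemma}(2) for the subgroup computation. No gaps.
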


\begin{theorem}\label{invlimittheorem}
If $X$ is locally path connected, then a subgroup $H\leq\pionex$ is the intersection of open normal subgroups if and only if there exists an inverse limit $p:(E,e_0)\to (X,x_0)$ of regular covering projections that corresponds to $H$.
\end{theorem}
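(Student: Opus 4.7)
The plan is to prove both directions via Lemma~\ref{invlimitlemma}, combined with the classical fact that, for a locally path-connected space $X$, open normal subgroups of $\pi_1(X,x_0)$ in the quotient topology correspond (up to equivalence) to regular covering projections of $X$.

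For the forward direction, suppose $H=\bigcap_{i}N_i$ for some family of open normal subgroups. Let $J$ be the family of \emph{all} open normal subgroups of $\pi_1(X,x_0)$ containing $H$, ordered so that $N\leq N'$ iff $N'\subseteq N$. Because the intersection of two open (resp.\ normal) subgroups is again open (resp.\ normal), $J$ is directed, and by construction $\bigcap_{N\in J}N=H$. For each $N\in J$, choose a based regular covering projection $p_N:(E_N,e_N)\to(X,x_0)$ with $(p_N)_{\#}(\pi_1(E_N,e_N))=N$. Whenever $N'\subseteq N$ in $J$, the classical lifting criterion provides a unique based map $f_{N',N}:(E_{N'},e_{N'})\to(E_N,e_N)$ with $p_N\circ f_{N',N}=p_{N'}$, and uniqueness forces $f_{N',N}\circ f_{N'',N'}=f_{N'',N}$ whenever $N''\subseteq N'\subseteq N$. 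The compatible tuple $(e_N)_{N\in J}$ is then a point of $\varprojlim_N E_N$; let $E$ be its path component and $p$ the restriction of $\varprojlim_N p_N$ to $E$. By Lemma~\ref{invlimitlemma}, $p$ has the continuous path-covering property and $p_{\#}(\pi_1(E,(e_N)))=\bigcap_{N\in J}N=H$.

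For the converse, suppose $p:(E,e_0)\to(X,x_0)$, as in Definition~\ref{inverselimitsdef}, is an inverse limit of regular covering projections $p_j:(E_j,e_j)\to(X,x_0)$ corresponding to $H$. By Lemma~\ref{invlimitlemma}, $H=\bigcap_{j}N_j$, where $N_j=(p_j)_{\#}(\pi_1(E_j,e_j))$. Each $N_j$ is normal because $p_j$ is regular. For openness: $p_j$ has the continuous path-covering property (Corollary~\ref{coveringmap}); $E_j$ is locally path-connected because $X$ is and $p_j$ is a local homeomorphism; hence $ev_1:P(E_j,e_j)\to E_j$ is an open surjection, so a quotient map. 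Theorem~\ref{liftingcorrespondencetheorem} then identifies $\pi_1(X,x_0)/N_j$ homeomorphically with the fiber $p_j^{-1}(x_0)$, which is discrete since $p_j$ is a covering map. Consequently $\pi_1(X,x_0)/N_j$ is discrete, i.e.\ $N_j$ is open in $\pi_1(X,x_0)$, so $H$ is an intersection of open normal subgroups.

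The main point requiring care is the invocation in the forward direction of the classification of regular covering projections by open normal subgroups, since $X$ is not assumed semilocally simply connected; this step leans on the quotient topology of $\pi_1(X,x_0)$ rather than any classical local condition. With that correspondence in hand, the remainder is bookkeeping: assembling the inverse system of regular covers on the existence side and, on the converse side, extracting openness of each $N_j$ from the quotient property of $ev_1$ on the locally path-connected space $E_j$ via Theorem~\ref{liftingcorrespondencetheorem}.
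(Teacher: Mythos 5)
Your proof is correct and takes essentially the same route as the paper: both directions reduce to Lemma~\ref{invlimitlemma} together with the classification of regular covering projections of a locally path-connected space by open normal subgroups of $\pionex$, the only variation being that for the converse the paper simply cites \cite[Cor.~5.9]{FZ13corefree} for openness of each $N_j$, whereas you derive it internally from Theorem~\ref{liftingcorrespondencetheorem}. One small caveat on that derivation: Theorem~\ref{liftingcorrespondencetheorem} assumes $\{x_0\}$ is closed in $X$, which is not a hypothesis here, but your step survives because only the \emph{continuity} of $\phi:\pionex/N_j\to p_j^{-1}(x_0)$ is needed (a continuous bijection onto a discrete fiber already forces the coset space to be discrete), and that part of the cited proof does not use closedness of $\{x_0\}$.
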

\begin{proof}
We recall \cite[Corollary 5.9]{FZ13corefree} which states that a subgroup $K\leq \pionex$ contains an open normal subgroup if and only if there exists a covering projection over $X$ that corresponds to $K$. As a consequence, if $q:(E,e)\to (X,x)$ is a covering projection, then $q_{\#}(\pi_1(E,e))$ is open in $\pi_1(X,x)$.

Suppose $H=\bigcap_{j\in J}N_j$ where $N_j$ is an open normal subgroup of $\pionex$. We may assume $\{N_j\mid j\in J\}$ is the set of all open normal subgroups in $\pionex$ containing $H$ so that $J$ becomes a directed set: $j\geq j'$ in $J$ if and only if $N_j\leq N_{j'}$. For each $j\in J$, find a regular covering projection $p_j:(E_j,e_j)\to (X,x_0)$ corresponding to $N_j$. Since the spaces $E_j$ are locally path connected, the usual lifting properties of covering projections give the existence of unique maps $f_{j,j'}:(E_j,e_j)\to (E_{j'},e_{j'})$ such that $p_{j'}\circ f_{j,j'}=p_j$ whenever $j\geq j'$. Together, these maps form an inverse system and taking the limit gives a map $\varprojlim_{j}p_j:\varprojlim_{j}E_j\to X$. Let $E$ be the path component of $(e_j)_{j\in J}$ in $\varprojlim_{j}E_j$. Consider the restriction $f_j:E\to E_j$ the projection map, and the restriction $p:
(E,(e_j))\to (X,x_0)$ of $\varprojlim_{j}p_j$. By Lemma \ref{invlimitlemma}, $p$ has the continuous path-covering property and $p_{\#}(\pi_1(E,(e_j)))=\bigcap_{j\in J}(p_j)_{\#}(\pi_1(E_j,e_j))=\bigcap_{j\in J}N_j=H$.

For the converse, suppose $p:E\to X$ is an inverse limit of regular covering projections $p_j:E_j\to X$ where $p:(E,e)\to (X,x)$ corresponds to $H$. Let $f_j:E\to E_j$ be the projections. Set $e_j=f_j(e)$ and $N_j=(p_j)_{\#}(\pi_1(E_j,e_j))$. Since $p_j$ is a regular covering projection, $N_j$ is an open normal subgroup of $\pi_1(X,x_0)$ for all $j\in J$. It follows from Lemma \ref{invlimitlemma} that $H=\bigcap_{j\in J}N_j$.
\end{proof}

\begin{definition}\cite{Brsemi}
A \textit{semicovering map} is a local homeomorphism $p:E\to X$ with the continuous path-covering property.
\end{definition}
We refrain from calling a semicovering a ``projection" since a semicovering need not be locally trivial. As observed in \cite{KMTSemicover}, one may define a semicovering to be a local homeomorphism with the path-covering property (the continuous path-covering property follows as a consequence). Every covering projection over a space $X$ is a semicovering. The converse rarely holds, e.g. even for the Hawaiian earring \cite{FZ13corefree}. We prove the following theorem, which generalizes the classical theorem that every covering projection is a Hurewicz fibration \cite[Theorem 2.2.3]{Spanier66}; the lack of local triviality requires us to formulate a line of argument different from Spanier's proof.

\begin{theorem}
Every semicovering map is a Hurewicz fibration with discrete fibers.
\end{theorem}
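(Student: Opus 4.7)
Discreteness of fibers is immediate: if $U\ni e$ is a local-homeomorphism neighborhood, then $U\cap p^{-1}(p(e))=\{e\}$ since $p|_U$ is injective. The substance of the theorem is producing homotopy lifts against an arbitrary space $Z$. My plan is to construct a continuous lifting function
\[
\lambda:\Omega_p=\{(e,\alpha)\in E\times P(X)\mid\alpha(0)=p(e)\}\to P(E),
\]
satisfying $\lambda(e,\alpha)(0)=e$ and $p\circ\lambda(e,\alpha)=\alpha$, by assigning to $(e,\alpha)$ the unique lift guaranteed by the path-covering property. Once $\lambda$ is continuous, the standard adjunction argument finishes: given $f:Z\to E$ and $g:Z\times I\to X$ with $p\circ f=g(\cdot,0)$, the map $z\mapsto(f(z),g(z,\cdot))$ is continuous $Z\to\Omega_p$ (since $I$ is locally compact Hausdorff), and $\wt{g}(z,t):=\lambda(f(z),g(z,\cdot))(t)$ is the required continuous lift.

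To prove continuity of $\lambda$ at $(e_0,\alpha_0)$, set $\wt{\alpha}_0=\lambda(e_0,\alpha_0)$ and cover the compact set $\wt{\alpha}_0(I)$ by finitely many local-homeomorphism neighborhoods. By the Lebesgue number lemma one obtains a partition $0=t_0<t_1<\cdots<t_n=1$ and open sets $U_i\subseteq E$ with $p|_{U_i}:U_i\to V_i:=p(U_i)$ a homeomorphism and $\wt{\alpha}_0([t_{i-1},t_i])\subseteq U_i$. Crucially, $\wt{\alpha}_0(t_i)\in U_i\cap U_{i+1}$, and $p(U_i\cap U_{i+1})$ is open in $X$ because $p|_{U_i}$ is a homeomorphism onto an open set. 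I then take the open neighborhood $W\subseteq\Omega_p$ of $(e_0,\alpha_0)$ defined by $e\in U_1$, $\alpha([t_{i-1},t_i])\subseteq V_i$, and $\alpha(t_i)\in p(U_i\cap U_{i+1})$ for all $i$, each of which is a subbasic open condition in $E\times P(X)$ with the compact-open topology on $P(X)$. On $W$, define the candidate lift piecewise by $\wt{\alpha}|_{[t_{i-1},t_i]}:=(p|_{U_i})^{-1}\circ\alpha|_{[t_{i-1},t_i]}$; compatibility at $t_i$ follows because both $(p|_{U_i})^{-1}(\alpha(t_i))$ and $(p|_{U_{i+1}})^{-1}(\alpha(t_i))$ lie in $U_{i+1}$ and project to $\alpha(t_i)$ under the injective $p|_{U_{i+1}}$, hence coincide. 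Since $\wt{\alpha}(0)=(p|_{U_1})^{-1}(p(e))=e$, unique path-lifting identifies $\wt{\alpha}$ with $\lambda(e,\alpha)$. Piecewise post-composition with the continuous homeomorphisms $(p|_{U_i})^{-1}$ is continuous in the compact-open topology, which gives continuity of $\lambda|_W$.

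The main obstacle is precisely this gluing: for a generic $(e,\alpha)$ near $(e_0,\alpha_0)$, the piecewise-defined candidate lift must remain on the ``correct'' sheet across each partition point rather than jumping to another point of the same fiber. Openness of $p(U_i\cap U_{i+1})$ is what allows the sheet-compatibility at each $t_i$ to be imposed as an open constraint on $\alpha$, upgrading the classical sheet-by-sheet lifting argument for covering projections into the continuous-family argument needed in the present generality. Everything else is formal.
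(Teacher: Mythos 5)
Your proof is correct, and it packages the argument differently from the paper. The paper proves the homotopy lifting property directly: given $f:Z\to E$ and $g:Z\times I\to X$, it defines $\wt{g}$ fiberwise by lifting each path $t\mapsto g(z,t)$ and then establishes continuity of $\wt{g}$ on tubes $V\times I$ by an induction that shrinks neighborhoods $A_1\supseteq B_1\supseteq A_2\supseteq\cdots$ of a fixed $z_0$ in $Z$, imposing $g(A_{j+1}\times\{t_j\})\subseteq p(U_j\cap U_{j+1})$ at each transition. You instead prove that $p$ admits a continuous lifting function $\lambda$ on $\Omega_p=E\times_X P(X)$ and then deduce the lifting property for arbitrary $Z$ formally via the exponential law (legitimate here since $I$ is locally compact Hausdorff); this is Hurewicz's classical characterization of fibrations, and it has the side benefit of exhibiting $p$ as a \emph{regular} fibration with a canonical (indeed unique) lifting function. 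The geometric heart of both arguments is identical: the subdivision by local-homeomorphism neighborhoods $U_i$ and, crucially, the observation that $p(U_i\cap U_{i+1})$ is open so that membership of $\alpha(t_i)$ in it is an open constraint forcing the piecewise lift to stay on the correct sheet --- exactly the point the paper flags with its parenthetical that $p(U_1\cap U_2)$ need not equal $p(U_1)\cap p(U_2)$. Your route trades the paper's hands-on nested-neighborhood induction in $Z$ for a single open set $W$ in $E\times P(X)$ plus some routine compact-open-topology bookkeeping (continuity of restriction, post-composition, and gluing); the paper's route avoids any appeal to function-space topology. Both are complete; your sheet-compatibility verification at $t_i$ (both preimages equal the common point $u\in U_i\cap U_{i+1}$ over $\alpha(t_i)$, by injectivity of $p|_{U_{i+1}}$) is exactly right.
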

\begin{proof}
Let $p:E\to X$ be a semicovering map. Since $p$ is a local homeomorphism, $p$ has discrete fibers. It suffices to check that $p$ has the homotopy lifting property with respect to an arbitrary space $Z$. Let $f:Z\to E$ and $g:Z\times I\to X$ be maps such that $p(f(z))=g(z,0)$. For each $z\in Z$, let $\gamma_z:I\to X$ denote the path given by $g(z,t)$ and let $\wt{\gamma}_z:I\to E$ be the unique continuous lift such that $\wt{\gamma}_z(0)=f(z)$. This gives a function $\wt{g}:Z\times I\to  E$ defined by $\wt{g}(z,t)=\wt{\gamma}_z(t)$. Since $p\circ \wt{g}=g$, it suffices to show that $\wt{g}$ is continuous. We do this by showing that $\wt{g}$ is continuous on each member of an open cover of $Z\times I$ by sets of the form $V\times I$. 

Fix $z_0\in Z$. Since the path $\wt{\gamma}_{z_0}:I\to E$ is continuous, we may find a subdivision $0=t_0<t_1<t_2<\dots <t_m=1$ and open sets $U_1,U_2,\dots,U_m$ such that $\wt{\gamma}_{z_0}([t_{j-1},t_j])\subseteq U_j$ and such that $p$ maps $U_j$ homeomorphically onto the open set $p(U_j)$ of $X$. Find an open neighborhood $A_1$ of $z_0$ in $Z$ such that $f(A_1)\subseteq U_1$. Since $\gamma_{z_0}([t_0,t_1])\subseteq p(U_1)$, the compactness of $[t_0,t_1]$ and the continuity of $g$ allow us to find a neighborhood $B_1$ of $z_0$ in $Z$ such that $B_1\subseteq A_1$ and $g(B_1\times [t_0,t_1])\subseteq p(U_1)$. Since $\wt{g}(B_1\times \{t_0\})=f(B_1)\subseteq U_1$, we have $\wt{g}|_{V_1'\times [t_0,t_1]}=p|_{U_j}^{-1}\circ g|_{B_1\times [t_0,t_1]}$ and so we may conclude that $\wt{g}$ is continuous on $B_1\times [t_0,t_1]$. Since $g(z_0,t_1)=\gamma_{z_0}(t_1)\in p(U_1\cap U_2)$, we may find a neighborhood $A_2$ of $z_0$ in $Z$ such that $A_2\subseteq B_1$ and $g(A_2\times \{t_1\})\subseteq p(U_1\cap U_2)$. Since $p$ maps $U_1\cap U_2$ homeomorphically onto the open set $p(U_1\cap U_2)$ (but not necessarily onto $p(U_1)\cap p(U_2)$), our choice ensures that $\wt{g}$ is continuous on $A_2\times [t_0,t_1]$ and $\wt{g}(A_2\times \{t_1\})\subseteq U_1\cap U_2$. 

Applying the same procedure, we may find neighborhoods $z_0\in A_3\subseteq B_2\subseteq A_2$ such that $\wt{g}$ is continuous on $A_3\times [t_1,t_2]$ and $\wt{g}(A_3\times \{t_2\})\subseteq U_2\cap U_3$. Proceeding inductively, we obtain finitely many nested neighborhoods $z_0\in A_n\subseteq A_{n-1}\subseteq \cdots \subseteq A_2\subseteq A_1$ such that $\wt{g}$ is continuous on $A_{j}\times [t_{j-2},t_{j-1}]$ and $\wt{g}(A_j\times \{t_{j-1}\})\subseteq U_{j-1}\cap U_j$ (the second inclusion being required for the induction). We terminate the induction with $A_{n+1}$ by taking $U_{n+1}=U_n$. 

Let $V=A_{n+1}$. By restricting $\wt{g}$, we see that $\wt{g}$ is continuous on $V\times [t_{j-1},t_j]$ for all $j\in\{1,2,\dots, n\}$. Hence, by the pasting lemma, $\wt{g}$ is continuous on the tube $V\times I$. Since $\wt{g}$ is continuous on an open neighborhood of every point in $Z\times I$, $\wt{g}$ is continuous.
\end{proof}

Since the class of Hurewicz fibrations with totally path-disconnected fibers is closed under inverse limits (in the sense of Definition \ref{inverselimitsdef}), we have the following corollary.

\begin{corollary}\label{semicoveringcorollary}
If $p:E\to X$ is an inverse limit of semicovering maps, then $p$ is a Hurewicz fibration.
\end{corollary}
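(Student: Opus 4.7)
The plan is to deduce the corollary from two ingredients: (i) the theorem just established, which says every semicovering is a Hurewicz fibration with discrete fibers, and (ii) the claim that the class of Hurewicz fibrations with totally path-disconnected fibers is closed under the inverse limit construction of Definition \ref{inverselimitsdef}. Since discrete fibers are in particular totally path-disconnected, applying (ii) to an inverse system of semicoverings immediately yields the corollary. The main work is thus to verify (ii).

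To establish (ii), suppose $J$ is a directed set, each $p_j:E_j\to X$ is a Hurewicz fibration with totally path-disconnected fibers, and $f_{j,j'}:E_j\to E_{j'}$ are bonding maps compatible with the $p_j$. Write $\widehat{E}=\varprojlim_j E_j$ and $\hat p:\widehat{E}\to X$ for the natural map, and let $\pi_j:\widehat{E}\to E_j$ be the canonical projections. Given a test map $f:Z\to \widehat{E}$ and a homotopy $g:Z\times I\to X$ with $\hat p\circ f(\cdot)=g(\cdot,0)$, I will build a compatible family of lifts $\widetilde{g}_j:Z\times I\to E_j$. For each $j$, the Hurewicz fibration property of $p_j$ (applied to $\pi_j\circ f$ and $g$) supplies such a lift with $\widetilde{g}_j(\cdot,0)=\pi_j\circ f$. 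The key observation is that, for $j\geq j'$, both $f_{j,j'}\circ\widetilde{g}_j$ and $\widetilde{g}_{j'}$ are lifts of $g$ through $p_{j'}$ starting at $\pi_{j'}\circ f$; since $p_{j'}$ has totally path-disconnected fibers, Lemma \ref{serrecharlemma} gives unique path-lifting, hence the two homotopy-lifts agree. (Uniqueness of homotopy-lifts from uniqueness of path-lifts follows by slicing in the $Z$-coordinate.) Thus the $\widetilde{g}_j$ form a cone over the inverse system, inducing a unique $\widetilde{g}:Z\times I\to \widehat{E}$ with $\hat p\circ\widetilde{g}=g$ and $\widetilde{g}(\cdot,0)=f$, so $\hat p$ is a Hurewicz fibration. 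Its fibers embed into $\prod_j p_j^{-1}(x)$, a product of totally path-disconnected spaces, and so are totally path disconnected.

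Finally, the construction of Definition \ref{inverselimitsdef} restricts $\hat p$ to a path component $E\subseteq \widehat{E}$; I must check that the restriction $p=\hat p|_E:E\to X$ is still a Hurewicz fibration (with totally path-disconnected fibers, which is automatic as $p^{-1}(x)\subseteq \hat p^{-1}(x)$). Given $f:Z\to E$ and $g:Z\times I\to X$ with $p\circ f(\cdot)=g(\cdot,0)$, apply the lifting property of $\hat p$ to obtain $\widetilde{g}:Z\times I\to\widehat{E}$. For fixed $z\in Z$, the path $t\mapsto\widetilde{g}(z,t)$ begins at $f(z)\in E$, so it remains in the path component $E$; therefore $\widetilde{g}$ factors through $E$, as required. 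I expect the step demanding the most care is verifying the compatibility $f_{j,j'}\circ\widetilde{g}_j=\widetilde{g}_{j'}$, since promoting unique path-lifting to uniqueness of \emph{homotopy}-lifts needs totally path-disconnected fibers (and the coordinate-wise slicing argument); the rest is essentially bookkeeping with universal properties.
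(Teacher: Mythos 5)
Your proposal is correct and follows the same route as the paper: the preceding theorem gives that each semicovering is a Hurewicz fibration with discrete (hence totally path-disconnected) fibers, and the corollary then follows from the closure of Hurewicz fibrations with totally path-disconnected fibers under the inverse limit construction of Definition \ref{inverselimitsdef}. The only difference is that the paper simply asserts this closure property in one sentence, whereas you supply the verification (compatibility of the lifts $\widetilde{g}_j$ via unique path-lifting, and the restriction to a path component), and that verification is sound.
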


\begin{remark}\label{semicoveringremark}
If $p:(E,e_0)\to (X,x_0)$ is any semicovering, then $p_{\#}(\pi_1(E,e_0))$ is an open subgroup of $\pionex$ \cite[Proof of Theorem 5.5]{Brsemi}. For a locally path-connected space $X$ (and many non-locally path-connected spaces), the open subgroups of $\pionex$ are classified by the semicovering maps over $X$ \cite{Brsemi}. By mimicking the proof of Theorem \ref{invlimittheorem}, one may prove an analogous statement for inverse limits: $H$ is the intersection of open (not necessarily normal) subgroups if and only if there exists an inverse limit $p:(E,e_0)\to (X,x_0)$ of semicovering maps that corresponds to $H$.
\end{remark}
\begin{definition}\cite{BrazFGasTopGrp}
The \textit{$\tau$-topology} on $\pi_1(X,x_0)$ is the finest topology such that (1) $\pionex$ is a topological group and (2) the map $q:\Omega(X,x_0)\to \pionex$, $q(\alpha)=[\alpha]$ is continuous. We write $\pi_{1}^{\tau}(X,x_0)$ for the fundamental group equipped with the $\tau$-topology.
\end{definition}
Note that the quotient topology of $\pionex$ is generally finer than that of $\pi_{1}^{\tau}(X,x_0)$ and we have $\pi_1(X,x_0)=\pi_{1}^{\tau}(X,x_0)$ if and only if $\pionex$ is a topological group. It is shown in \cite[Cor. 3.9]{BrazFGasTopGrp} that the groups $\pionex$ and $\pi_{1}^{\tau}(X,x_0)$ share the same open subgroups. Additionally, every topological group $G$ is isomorphic to $\pi_{1}^{\tau}(X,x_0)$ for some space $X$ where $X$ is constructed in a manner similar to a 2-dimensional CW-complex, that is by attaching 2-cells to a generalized wedge of circles. In particular, such spaces $X$ satisfy a property called ``wep-connectedness," which is introduced in \cite{Brsemi}, and which lies between $X$ being locally path-connected and $ev_1:P(X,x_0)\to X$ being quotient \cite[Prop. 6.2]{Brsemi}. We will only need to use the following weaker fact: every topological group $G$ is isomorphic to $\pi_{1}^{\tau}(X,x_0)$ for a space $X$ where $ev_1:P(X,x_0)\to X$ is a quotient map.
\begin{example}
We give an extreme example to show that there are maps with the continuous path-covering property that cannot be inverse limits of covering projections. Find a space $X$ such that $ev_1:P(X,x_0)\to X$ is quotient and $\pi_{1}^{\tau}(X,x_0)$ is topologically isomorphic the additive group of rationals $\bbq$. Since the quotient topology of $\pionex$ is finer than the $\tau$-topology, $\pionex$ is totally path-disconnected. Moreover, if $H\leq \pionex$ is closed, then $\pionex/H$ is a countable $T_1$ space and therefore must be totally path-disconnected. Therefore, the closed subgroups of $\bbq$ are classified by maps $E\to X$ with the continuous path-covering property up to weak equivalence (and up to equivalence if we restrict to total spaces $E$ with $ev_1:P(E,e)\to E$ quotient). However, $\bbq$ has no proper open subgroups. Since $\pionex$ and $\pi_{1}^{\tau}(X,x_0)$ share the same open subgroups, $\pionex$ has no proper open subgroups. Therefore, $X$ admits many maps $E\to X$ with the continuous path-covering property but Remark \ref{semicoveringremark} ensures that the identity map $X\to X$ is the only one which is equivalent to a (semi)covering or inverse limit of (semi)coverings.
\end{example}
We consider inverse limits of (semi)coverings to be ``highly structured" among those maps with the continuous path-covering property. Recall the statement of Theorem \ref{mainresult2} from the introduction. To prove this result, we apply some famous structure theorems from topological group theory. 
\begin{proof}[Proof of Theorem \ref{mainresult2}]
We consider only the non-trivial case where $X$ is not simply connected. Since $p$ has the continuous path-covering property, $\pionex/H$ is a  path-disconnected, $T_1$ quasitopological group (recall Proposition \ref{tpdfibersprop} and Corollary \ref{tpdcorollary}). If $\pionex/H$ is compact or, more generally, locally compact, then the quasitopological group $\pionex/H$ is a topological group by a theorem of R. Ellis \cite{ellis}. Hence, in both statements (1) and (2) to be proved, $\pionex/H$ is a non-trivial, locally compact, totally path-disconnected, Hausdorff topological group. It is a result of Gleason \cite{Gleason} that every locally compact group that is not totally disconnected must contain an arc. Hence, $\pionex/H$ must also be totally disconnected.

(1) It is well-known that every totally disconnected compact group is a pro-finite group \cite[Theorem 1.34]{compactgroups}. Therefore, if $\pionex/H$ is compact, it is profinite and the identity element of $\pionex/H$ has a basis $\{N_j\mid j\in J\}$ of finite-index open normal subgroups (whose intersection is the trivial subgroup). If $q:\pionex\to \pionex/H$ is the natural quotient map, then it follows that $H$ is equal to the intersection $\bigcap_{j\in J}q^{-1}(N_j)$ of open subgroups $q^{-1}(N_j)$, which are finite-index and normal in $\pionex$. Since $X$ is locally path-connected, by \cite[Corollary 5.9]{FZ13corefree}, there exists finite sheeted regular covering maps $p_j:E_j\to X$ corresponding to the subgroups $q^{-1}(N_j)$ respectively.
Applying Theorem \ref{invlimittheorem}, we see that $p$ is weakly equivalent to the restriction of an inverse limit of finite-sheeted regular covering projections.

(2) If $E$ is simply connected, or equivalently if $H=1$, and $\pionex=\pionex/H$ is locally compact, then by van Dantzig's Theorem \cite{vandantzig}, there exists a compact open subgroup $K\leq \pionex$. Since $X$ is locally path-connected, the classification of semicoverings \cite{Brsemi} applies and there exists a semicovering $q:(E',e_0')\to (X,x_0)$ such that $K=q_{\#}(\pi_1(E',e_0'))$. Note that $E'$ is locally path connected since $q$ is a local homeomorphism. By Statement (2) of Theorem \ref{mainresult1}, we may assume that $ev_1:P(E,e_0)\to E$ is quotient without changing the weak equivalence class of $p$. Since $E$ is simply connected, Lemma \ref{liftinglemma} gives a unique map $r:(E,e_0)\to (E',e_0')$ such that $q\circ r=p$. Since $p$ and $q$ have the continuous path-covering property, it follows from Lemma \ref{compositionlemma} that $r$ also has the continuous path-covering property. By Theorem \ref{closedembeddingonpi1}, $q_{\#}$ maps $\pi_1(E',e_0')$ homeomorphically onto $K$. In particular, $\pi_1(E',e_0')$ is compact. By Part (1), $r$ is weakly equivalent to the restriction of an inverse limit of covering projections $r_j:(E_j,e_j)\to (E',e_{0}')$, $j\in J$ over $E'$. The composition of two semicoverings is a semicovering \cite[Cor. 3.5]{Brsemi} (but need not be a covering projection) and thus $q\circ r_j:E_j\to X$ is a semicovering for all $j\in J$. Since $\bigcap_{j\in J}(r_j)_{\#}(\pi_1(E_j,e_j))=1$ and $q_{\#}$ is injective, we have \[\bigcap_{j\in J}(q\circ r_j)_{\#}(\pi_1(E_j,e_j))=q_{\#}\left(\bigcap_{j\in J}(r_j)_{\#}(\pi_1(E_j,e_j))\right)=1.\]
Applying Lemma \ref{invlimitlemma} and Theorem \ref{mainresult1}, it follows that $p$ is weakly equivalent to the inverse limit of the semicovering maps $q\circ r_j$
\end{proof}
%
%In some situations, the normality of $H$ is not required.
%
%\begin{example}
%Every profinite group $\varprojlim_{n\in\bbn}G_n$ is isomorphic to $\pi_1(E,e_0)$ as a topological group for some 2-dimensional, separable metric space $E$. Find a finite simplicial complex $X_n$ with $\pi_1(X_n,x_n)\cong G_n$ as a discrete group and set $(X,x_0)=\prod_{n\in\bbn}(X_n,x_n)$ so that $\pi_1(X,x_0)\cong G=\prod_{n\in\bbn}G_n$ (see \cite[Ex. 3.20]{BrazFGasTopGrp}). Since $H=\varprojlim_{n}G_n$ may be realized canonically as the intersection of finite-index open subgroups of $G$, one may check that there is an inverse limit $p:(E,e_0)\to (X,x_0)$ of finite-sheeted covering maps $p_n:(E_n,e_n)\to (X,x_0)$ such that $p_{\#}(\pi_1(E,e_0))=H$. Since each $E_n$ is a finite 2-dimensional complex, $\varprojlim_{n}E_n$ is a 2-dimensional compact metric space and it follows that a path component such as $E$ is a 2-dimensional separable metric space. Moreovoer, $p_{\#}$ maps $\pi_1(E,e_0)$ homeomorphically onto $H$ by Theorem \ref{closedembeddingonpi1}. Similar statements may be made for infinite limits of discrete groups with index sets other than $\bbn$.
%\end{example}

\section{Diagramatic Summary}\label{sectiondiagram}

We conclude with a diagram that summarizes the relationships between lifting properties studied in this paper and the topology of $\pionex$. In the diagram below $X$ is assumed to be locally path-connected, $p:(E,e_0)\to (X,x_0)$ is a map and $H=p_{\#}(\pi_1(E,e_0))$. The left side of the diagram involves properties of $p$ and the right side of the diagram involves properties of $H$. We give the following key for reading the diagram:
\begin{itemize}
\item $A\leq_{cl}B$ - $A$ is a closed subgroup of $B$
\item $A\leq_{op}B$ - $A$ is an open subgroup of $B$
\item $A\trianglelefteq_{op}B$ - $A$ is an open normal subgroup of $B$
\item t.p.d. - totally path-disconnected
\item $Core_G(H)=\bigcap_{g\in G}gHg^{-1}$ - the core of $H$ in $G$, i.e. the largest normal subgroup of $G$ that is a subgroup of $H$.
\end{itemize}
A horizontal biconditional arrows means that there exists a map weakly equivalent to $p$ that satisfies the property on the left if and only if $H$ satisfies the property on the right. For example, $p$ is weakly equivalent to a covering projection if and only if the core of $H$ in $G$ is open, which is equivalent to shorter statement $Core_G(H)\trianglelefteq_{op}H$. 

All horizontal or downward arrows hold without extra hypotheses. The two partial converse arrows that point upward (from Theorem \ref{mainresult2}) are labeled with the extra required hypotheses (indicated with a ``$+$") and appear on opposite sides of the diagram to minimize clutter in the image.

\[\xymatrix{
\scriptsize{\txt{covering \\ projection}} \ar@{<=>}[rrr]^-{\scriptsize{\txt{\cite[Cor. 5.9]{FZ13corefree}}}} \ar@{=>}[d]_-{\tiny{\txt{\cite[Prp. 3.7]{Brsemi}}}} \ar@{=>}[ddr] && &  \scriptsize{\txt{$Core_G(H)\trianglelefteq_{op} H $}} \ar@{=>}[d] \ar@{=>}[ddr] \\
\scriptsize{\txt{semicovering\\ map}}  \ar@{<=>}[rrr]^-{\scriptsize{\txt{\cite[Cor. 7.20]{Brsemi}}}}|!{[u];[dr]}\hole \ar@{=>}[d]  &   && 
\scriptsize{\txt{$H\leq_{op} G$}} \ar@{=>}[d]  &     \\
\scriptsize{\txt{inv. limit of\\
semicoverings}} \ar@/^1.8pc/@{<=>}[rrr]^-{\tiny{\txt{Rmk. \ref{semicoveringremark}}}}|!{[uu];[r]}\hole   \ar@{=>}[d]_-{\tiny{\txt{Cor. \ref{semicoveringcorollary}}}} & \scriptsize{\txt{inv. limit of\\
covering \\ projections}}  \ar@{=>}[l] \ar@{=>}[dl] \ar@/_1.8pc/@{<=>}[rrr]_(.4){\tiny{\txt{Thm. \ref{invlimittheorem}}}}|!{[urr];[ddrr]}\hole  &&  \tiny{\txt{$\ds H= \bigcap_{H\leq K\leq_{op}G}K$}} \ar@<-.5ex>@{=>}[dd]& \tiny{\txt{$ \ds H=\bigcap_{H\leq N\trianglelefteq_{op}G}N$}}  \ar@{=>}[l]
\\
\scriptsize{\txt{Hur. fibration\\ + t.p.d. fibers}} \ar@{=>}[d]_-{\tiny{\txt{Lem. \ref{hurfibrationlemma}}}}\\
\scriptsize{\txt{continuous\\ path-covering\\property}}  \ar@{=>}[uur]_-{\tiny{\txt{+ $G/H$ a compact grp. \\
Thm. \ref{mainresult2} (1)}}} \ar@{<=>}[rrr]_-{\tiny{\txt{Thm. \ref{mainresult1}}}} \ar@{=>}[d]_-{\tiny{\txt{Thm. \ref{fibrationtheorem}}}} &&& \scriptsize{\txt{$G/H$ t.p.d.}}  \ar@<-.5ex>@{=>}[uu]_(.4){\tiny{\txt{+ $H=1$ \\+ $G$ loc. compact\\
Thm. \ref{mainresult2} (2)}}}   \ar@{=>}[d]  \\
\scriptsize{\txt{Serre fibration\\ + t.p.d. fibers}}  &&& \scriptsize{\txt{$H\leq_{cl}G$}} \ar@{=>}[lll]^-{\scriptsize{\txt{\cite[Thm. 11]{BFqtpfg}}}}
}\]

\begin{thebibliography}{10}
\expandafter\ifx\csname url\endcsname\relax
  \def\url#1{\texttt{#1}}\fi
\expandafter\ifx\csname urlprefix\endcsname\relax\def\urlprefix{URL }\fi


%\bibitem{AT08} A.~Arhangel'skii, M.~Tkachenko, Topological Groups and Related Structures, Series in Pure and Applied Mathematics, Atlantis Studies in Mathematics, 2008.

%\bibitem{Br10.1} J.~Brazas, \textit{The topological fundamental group and free topological groups}, Topology Appl. 158 (2011) 779--802.

\bibitem{ATHP99}
S.~Ardanza-Trevijana, L.-J.~Hernández-Paricio, Fundamental progroupoid and bundles with a structural category, Topology Appl. {\bf 92} (1999) 85--99.

\bibitem{BP01topgrp} V.~Berestovskii, C.~Plaut, {\it Covering group theory for topological groups}, Topology Appl. {\bf 114} (2001) 141--186.

\bibitem{BP07uniform} V.~Berestovskii, C.~Plaut, {\it Uniform universal covers of uniform spaces}, Topology Appl. {\bf 154} (2007) 1748--1777.

\bibitem{BrazFGasTopGrp} J.~Brazas, \textit{The fundamental group as a topological group}, Topology Appl. {\bf 160} (2013) 170--188.

\bibitem{Brsemi} J.~Brazas, \textit{Semicoverings: A generalization of covering space theory}, Homotopy, Homology Appl. {\bf 14} (2012) 33--63.

\bibitem{BrazOpenSubgroupsofFTG} J. Brazas, \textit{Open subgroups of free topological groups}, Fund. Math. {\bf 226} (2014) 17--40.

%\bibitem{BrazasSCCovOvLy} J. Brazas, \textit{Semicoverings, coverings, overlays, and open subgroups of the quasitopological fundamental group}, Topology Proceedings 44 (2014) 285--313.

\bibitem{Brazcat} J.~Brazas, \emph{Generalized covering space theories}, Theory and Appl. of Categories {\bf 30} (2015) 1132--1162.

\bibitem{BFqtpfg} J. Brazas, P. Fabel, \textit{On fundamental groups with the quotient topology}, Journal of Homotopy and Related Structures {\bf 10} (2015) 71--91.

\bibitem{BFTestMap} J.~Brazas, H.~Fischer, \emph{Test map characterizations of local properties of fundamental groups}, Journal of Topology and Analysis. {\bf 12} (2020) 37--85.

\bibitem{BDLM10uniform} N.~Brodskiy, J.~Dydak, B.~Labuz, A.~Mitra, {\it Group actions and covering maps in the uniform category}, Topology Appl. {\bf 157} (2010) 2593--2603.

\bibitem{BDLM08} N.~Brodskiy, J.~Dydak, B.~Labuz, A.~Mitra, \emph{Covering
maps for locally path connected spaces}, Fund. Math. {\bf 218} (2012) 13--46.

\bibitem{CM}
J.~Calcut, J.~McCarthy, Discreteness and homogeneity of the topological
  fundamental group, Topology Proc. {\bf 34} (2009) 339--349.

\bibitem{CC06} J.~Cannon, G.~Conner, On the fundamental group of one-dimensional spaces, Topology Appl. {\bf 153} (2006) 2648--2672.

\bibitem{CHPGeomofCLS} G.~Conner, W.~Herfort, P.~Pave\v{s}i\'{c}, \textit{Geometry of compact lifting spaces}. Preprint. arXiv:1901.02108.

%\bibitem{CMRZZ08} G.~Conner, M.~Meilstrup, D.~Repov\v{s}, A.~Zastrow, M.~\v{Z}eljko, \textit{On small homotopies of loops}, Topology Appl. 155 (2008) 1089–-1097.

\bibitem{Dydak} J. Dydak, \textit{Coverings and fundamental groups: a new approach}, Preprint. arXiv:1108.3253v1. 2011.


%\bibitem{Edafreesigmaproducts} K. Eda, \emph{Free $\sigma$-products and noncommutatively slender groups}, J. Algebra 148 (1992), 243–-263.

\bibitem{Eda98} K.~Eda, K.~Kawamura, \textit{The fundamental groups of one-dimensional spaces}, Topology Appl. {\bf 87} (1998) 163--172.

\bibitem{ellis} R.~Ellis, \textit{Locally compact transformation groups}. Duke Math. J. {\bf 24} (1957) 119--125.

\bibitem{engelking} R. Engelking, \textit{General topology}, Heldermann Verlag Berlin, 1989.

\bibitem{Fab10} P.~Fabel, \textit{Multiplication is discontinuous in the Hawaiian earring group}, Bull. Polish Acad. Sci. Math. {\bf 59} (2011) 77--83

\bibitem{Fab11} P.~Fabel, \textit{Compactly generated quasitopological homotopy groups with discontinuous multiplication}, Topology Proc. {\bf 40} (2012) 303--309.

\bibitem{FZ05} H.~Fischer, A.~Zastrow, \textit{The fundamental groups of subsets of closed surfaces inject into their first shape groups}, Algebr. Geom. Topol. {\bf 5} (2005) 1655–-1676.

\bibitem{FG05} H.~Fischer, C.~Guilbault, \textit{On the fundamental groups of trees of manifolds}, Pacific J. Math. {\bf 221} (2005) 49--79.

%\bibitem{FRVZ11} H.~Fischer, D.~Repov\v{s}, Z.~Virk, A.~Zastrow, \emph{On semilocally simply connected spaces}, Topology Appl. 158 (2011) 397--408.

\bibitem{FZ07} H.~Fischer, A.~Zastrow, \emph{Generalized universal covering spaces and the shape group}, Fund. Math. {\bf 197} (2007) 167--196.

\bibitem{FZ13corefree} H.~Fischer, A.~Zastrow, \emph{ A core-free semicovering of the Hawaiian Earring}, Topology Appl. {\bf 160} (2013) 1957--1967.

\bibitem{Fox74} R.~Fox, \textit{Shape theory and covering spaces}, Vol. 375 of Lecture Notes in Mathematics, Springer Verlag, 1974.

\bibitem{GHMMTopHomGrps} H. Ghane, Z.~Hamed, B. Mashayekhy, H.~Mirebrahimi, \textit{Topological Homotopy Groups}, Bull. Belg. Math. Soc. {\bf 15} (2008) 455--464.

\bibitem{Gleason} A.M.~Gleason. \textit{Arcs in locally compact groups}, Proc. Nat. Acad. Sci. U.S.A. {\bf 36} (1950), 663--667.

\bibitem{HP98} L.-J.~Hernández-Paricio, Fundamental pro-groupoids and covering projections, Fund. Math. {\bf 156} (1998) 1--33.

\bibitem{HPM} L. J. Hern\' andez Paricio, V. Matijevi\' c, {\it Fundamental groups and finite sheeted coverings}, J. Pure Appl. Algebra {\bf 214} (2010) 281--296.

%\bibitem{Hajek} O.~H\'{a}jek, \emph{Notes on quotient maps}, Commentationes Mathematicae Universitatis Carolinae, 7 (1966), No. 3, 319--323.

\bibitem{HW60} P.~Hilton, S.~Wylie, Homology Theory: An introduction to algebraic topology, Cambridge University Press, 1960.

\bibitem{compactgroups} K.H.~Hofmann, S.A.~Morris, \textit{The structure of compact groups}, De Gruyter Studies in Mathematics vol. 25. De Gruyter, Berlin, 2013.

\bibitem{Hovey} M. Hovey, \emph{Model Categeories}, Mathematical Surveys and Monographs, vol. 63, American Mathematical Society, Providence, RI, 1998. 

%\bibitem{MS82} S.~Mardesic, J.~Segal, Shape Theory, North-Holland Publishing Company, 1982.

\bibitem{KMTSemicover} M.~Kowkabi, B.~Mashayekhy, H.~Torabi, \emph{When is a Local Homeomorphism a Semicovering Map?}, Acta Mathematica Vietnamica {\bf 42} (2017) 653–-663.

%\bibitem{Michaelbiq} E.~Michael, \emph{Bi-quotient maps and cartesian products of quotient maps}, Annales de l'institut Fourier, 18 (1968) no. 2 287--302.

%\bibitem{MM86} J.~Morgan, I.~Morrison, A van kampen theorem for weak joins, Proc. London Math. Soc. 53 (1986) 562--576.

\bibitem{Spanier66} E.~Spanier, \textit{Algebraic Topology}, McGraw-Hill, 1966.

\bibitem{vandantzig} D. van Dantzig, \textit{Zur topologischen Algebra. III. Brouwersche und Cantorsche Gruppen}, Compositio Mathematica {\bf 3} (1936) 408–-426.

\bibitem{whiteheadquotient} J.H.C.~Whitehead, \textit{A note on a theorem of Borsuk}, Bull. Amer. Math. Soc. {\bf 54} (1958) 1125--1132.

\bibitem{Wilkins} Jay Wilkins, {\it The revised and uniform fundamental groups and universal covers of geodesic spaces}, Topology Appl. {\bf 160} (2013), no. 6, 812--835.

\end{thebibliography}
\end{document}